\numberwithin{equation}{section}      %numera le equazioni con le sezioni
\theoremstyle{definition}                          %stile corsivo
\newtheorem{thm}{Theorem}[section]     %definizione ambiente teorema
\newtheorem{prop}[thm]{Proposition}      %definizione ambiente proposizione
\newtheorem{lem}[thm]{Lemma}             %definizione ambiente lemma
\theoremstyle{definition}               %stile roman
\newtheorem{dfn}[thm]{Definition}%  definizione ambiente definizione
\newtheorem{ass}{Assumption}
\newtheorem{rem}[thm]{Remark}          
\def\R{\mathbb R}
\def\D{\mathbb D}
\def\P{\mathbb P}
\def\shf{{\cal F}}
\def\shm{{\cal M}}
\def\shy{{\cal Y}}
\def\1{\mathds{1}}
\def\lra{\longrightarrow}
\def\apt{\left(}
\def\cpt{\right)}
\def\be{\begin{equation}}
\def\ee{\end{equation}}
\author{{\sc Cristina Di Girolami}\footnote{
D\'epartement de math\'ematiques, UFR Sciences et Techniques, Le Mans Universit\'e, Avenue Olivier Messiaen, 72085, Le Mans CEDEX 9. Email: cristina.di\_girolami@univ-lemans.fr}
     {\sc,}\  
\ {\sc and}\ {\sc Francesco RUSSO} %$^*$}
\footnote{\'Ecole Nationale Sup\'erieure des Techniques Avanc\'ees,
ENSTA Paris, Institut Polytechnique de Paris,
Unit\'e de Math\'ematiques appliqu\'ees,
828, Boulevard des Mar\'echaux
F-91120 Palaiseau. Email: francesco.russo@ensta-paris.fr}}
\date{February 1st 2020}
\title{About classical solutions of the path-dependent heat equation}
\begin{document}
\maketitle
\selectlanguage{english}
{\bf Abstract}
This paper investigates two existence theorems for the path-dependent
heat equation, which is the Kolmogorov equation related to the
window Brownian motion, considered as a $C([-T,0])$-valued
process.
We concentrate on two general existence results of its classical solutions
related to different classes of terminal conditions: the first one is given by a
cylindrical non necessarily smooth random variable, the second one is a
smooth generic functional.

\medskip

[\textbf{2010 Math Subject Classification}: \ ] \ {60H05;   60H30;  91G80 }
\medskip

{\bf Key words and phrases}
Infinite dimensional analysis; Kolmogorov type equations; path-dependent heat equation; window Brownian motion.

%\tableofcontents

\section{Introduction}

The path-dependent heat equation is a natural extension of the classical heat equation
to the path-dependent world. If the heat equation constitutes the Kolmogorov equation
associated with Brownian motion viewed as a real valued process, then the path-dependent heat equation
is the Kolmogorov equation related to the Wiener process as  $C([-T,0])$-valued process,
that we will denominate as {\it window Brownian motion}. One particularity
of $C([-T,0])$ is that it is a (even non-reflexive) Banach space
and for integrator processes taking values in it, it is not obvious
to define a stochastic integral.
%{\color{FRA CONTROLLARE. VEDERE SE NON CITARE
% In the recent years some interest has been devoted
% to Banach space valued Kolmogorov equation, see for instance
% \cite{masiero, cerraidaprato}.
% The type of Kolmogorov equations we are looking here
% is related to path-dependent equations i.e. when the
% state space is given by $C = C([-T,0])$
In the recent past, many works were devoted
to various types of path-dependent PDE under different perspectives
(for instance under the perspective of viscosity solutions, see e.g. \cite{rosestolato_cosso, pengBSDEs, cosso_russo15b} ...),
using generally approaches close to the functional It\^o calculus of \cite{dupire}.
A  recent contribution in the study of the
path-dependent heat equation (in the spirit of Banach space) was carried on by \cite{zanco},
which considered (not necessarily smooth in time) mild type solutions,
involving at the same time a  path-dependent drift,
see also references therein for related contributions.
The problem of finding {\it classical} or  {\it smooth}
solutions has been neglected, especially using the
Banach space approach, except for some particular
final conditions, see e.g. \cite{cosso_russo15a} and
of course \cite{DGR}.

% In that framework most of the work was done using
% as state space direct sum $H$ of
% $L^2([-T,0])$ and $\R$.
% This is for instance the evolution  space of classical
%  stochastic delay equations:
%  a initial and fundamental paper is \cite{chojn}.
% More recently an interesting and complete monograph on this
%  subject is \cite{fur-tesDelay}.
% One of difficulties associated with $C([-T,0)$ is that it is not
%  the typical suitable space for which
% stochastic integration is challenging.
% For instance it is neither
% a UMD  nor an M-type 2 space.
%  step in the investigation of stochastic integration
% for $C([-\tau, 0])$-valued and associated processes
% was carried out by \cite{vanNerCon}.\\
% This context is natural when studying stochastic differential  equations with functional
% dependence (as for instance delay equations).
% Also due to the difficulty of stochastic integration and calculus
% in that space, most of the authors
% fit the problem in some ad hoc Hilbert space, see for instance
% \cite{chojn}.
% Concerning Kolmogorov type equations, applications in control
% theory were performed as for example \cite{gozzigoldys,
% masierofuhrtessi2}.
In this paper we focus on classical solutions of the path-dependent
heat equation
 with two types of terminal conditions.
% equation with terminal condition, associated with the
% a functional dependent diffusion process, whose
% archetype is
% window Brownian motion.
In reality this work updates \cite{DGR, DGRnote}, somehow a pioneering (never published) work
of the authors, which formulated similar results in a Hilbert framework.

Let $H:C([-T,0])\longrightarrow \R$ be continuous and let $ \sigma$ be a
% (non zero)
real constant. Even though some results can be extended to a more general context we have preferred for clarity  to work with $\sigma$ being a constant. See Section \ref{sec BrowStoF} for some results with general $\sigma:[0,T]\times \R\longrightarrow \R$.

Our  path dependent heat equation can be expressed as
\be	\label{kolmogorov}
\left\{
\begin{array}{l}
\partial_{t}u(t,\eta)+\int_{]-t,0]} D^{\perp}_{dx}u(t,\eta)\,d^{-}\eta(x) +\frac{1}{2}\sigma^2\langle D^2u(t,\eta),
\1_{\{0\}} \otimes  \1_{\{0\}}   \rangle =0  \mbox{ for }(t, \eta)\in [0,T[\times C([-T,0])\\
u(T,\eta)=H(\eta) \hspace{8cm} \mbox{ for } \eta\in C([-T,0]).\\
\end{array}
\right.
\ee
A function  $u:[0,T]\times C([-T,0])\longrightarrow \mathbb{R}$ will be a classical  solution of \eqref{kolmogorov} if it belongs to \\
$ C^{1,2}\left([0,T[\times C([-T,0])\right)\cap C^{0}\left([0,T]\times C([-T,0])\right)$ in the Fr\'echet sense and if it  verifies \eqref{kolmogorov}.
%%% DUBBIO NON E' DA PRELIMINARI?
% We recall that let $F:[0,T]\times C([-\tau,0])\longrightarrow \R$ be a Fr\'echet differentiable function, with Fr\'echet first order derivative
% $DF:[0,T]\times C([-\tau,0])\longrightarrow \left(C([-\tau,0]) \right)^*\cong \mathcal{M}([-\tau,0])$ and Fr\'echet second order derivative $D^2F: [0,T] \times C([-\tau,0]) \rightarrow \mathcal{B}(C([-\tau,0])\times C([-\tau,0]))\cong (C([-\tau,0])\hat \otimes_\pi C([-\tau,0]))^* $.
 For any given $(t,\eta)\in [0,T]\times C([-T,0])$,
  $D F \,(t,\eta) $ denotes the first order Fr\'echet derivative with respect to $\eta$,
$D^{\delta_{0}}F \,(t,\eta)$ the component of $D F \,(t,\eta)$ concentrated on the Dirac zero defined by $D^{\delta_{0}}F \,(t,\eta) := DF \,(t,\eta)(\{0\})$ and  $D^{\perp}F\, (t,\eta)$ denotes the component of $D F \,(t,\eta)$ \emph{singular} to the Dirac zero component, i.e. the measure defined by $ D^{\perp}F\, (t,\eta):=DF\, (t,\eta)-DF\, (t,\eta)\, (\{ 0\})\delta_{0}$. For every $\eta \in C([-\tau,0])$, we observe that $t \mapsto D^{\delta_{0}}F(t,\eta)$ is a real valued function.
If for each $(t,\eta)$, $D^{\perp}F\, (t,\eta)$ is absolutely continuous with respect to Lebesgue measure on the reals,
 $D^{ac}F\, (t,\eta)$ denotes its density and in particular it holds that $D^\perp_{dx} F\, (t,\eta)=D^{ac}_x F\, (t,\eta) \, dx$.
% The second order Fr\'echet derivative $D^2 u(t,\eta)$ is a continuous bilinear form on
% $C([-T,0])$, and we recall that $\mathcal{M}([-\tau,0]^{2})\subset\mathcal{B}(C([-\tau,0])\times C([-\tau,0]))$, and in most of
%  the treated cases $D^{2}F\,(t, \eta)\in \mathcal{M}([-\tau,0]^{2})$. We will denote with $D^{2}_{dx\,dy} F(t, \eta)$, or $D_{dx}D_{dy}F(t,\eta)$, the measure on $[-\tau,0]^{2}$.
%  $
%  \langle D^2u(t,\eta),
% \1_{\{0\}} \otimes  \1_{\{0\}}   \rangle
% $
% denotes the usual duality between a space and its dual.\\
%%% METTERE TUTTO NEI PRELIMINARI. C'E' GIA

A central object appearing in the path-dependent heat equation
 PDE \eqref{kolmogorov} is the deterministic integrals denoted by
\be		\label{eqRFG}
\int_{]-t,0]}D^\perp_{dx} u(t, \eta)d^{-}\eta(x),
\ee
where $D^\perp u(t, \eta)$ is a measure on $[-T,0]$ and $\eta \in C([-T,0])$.
We will give a sense, for $-T\leq a\leq b\leq 0$, to the term
$\int_{]a,b]}D^\perp u(t, x)d^{-}\eta(x)$ as the {\it deterministic forward integral} $\lim_{\epsilon\rightarrow 0} \int_{]a,b]}D^\perp _{dx}u(t, x)\frac{\eta(x+\epsilon)-\eta(x)}{\epsilon}dx$,
see Definition \ref{def FRW and BAWKW det int}.
More generally, let $\mu$ be a  finite Borel measure on $[-T,0]$ and $f$ a c\`adl\`ag function,
we will give a sense to the integral
$
\int_{]a,b]} \mu(dx)d^-f(x) %\quad \mbox{ and also to } \quad \int_{[a,b]} \mu(dx)d^-f(x).
$. Whenever $f$ has bounded variation and $\mu$ is absolutely continuous with respect to the Lebesgue measure
 it will coincide with the classical Riemann-Stieltjes integral,
 see Proposition \ref{prop A}.

%on the reals.
%We will define properly the \emph{deterministic forward integral} in Definition \ref{def FRW and BAWKW det int}.

As we mentioned, we state two  existence theorems  of the
 classical solution of \eqref{kolmogorov} under two
 different types of  terminal condition given by a function $H$. In Proposition \ref{pr VBN}, we
consider as terminal condition a possibly not smooth
 function  $H$ of a finite numbers of integrals of the type $\int_{-T}^0 \varphi d^-\eta$.
The reason of validity of that result (when $\sigma \neq 0$) can be understood through the non-degeneracy feature
of Brownian motion.

%exploit the non-degeneracy of the Brownian motion and the terminal condition $H$ is not forced to be smooth,
% $H$ is defined by a finitely generated function  $f:\mathbb{R}^{n}\rightarrow \mathbb{R}$.
%In  the case $\sigma=const\neq 0$, $f$ is required only continuous and with linear growth. But obviously whenevr $\sigma\equiv 0$, we require stronger condition on $f\in C^2(\mathbb{R}^{n})$.
In Theorem \ref{thm 2 derivate u} we suppose
%exploit the regularity of
 the terminal condition function $H$  to be $C^{3}(C([-T,0]))$. This result generalizes an existence results already established in
the unpublished monograph \cite{DGR} Sections 9.8 and 9.9, where we assumed a Fr\'echet  smooth dependence with respect to
$L^2([-T,0])$.
% going further assuming Fr\'echet dependance under $C([-T,0])$. \\

%Actually we concentrate only on $\sigma$ constant in purpose of clarity, even though generalizations to path dependent as for example $\tilde{\sigma}(t,\eta): [0,T]\times C([-T,0])\longrightarrow \mathbb{R}$ are possible.
%In this work we concentrate only in the case $\sigma$ constant in  purpose  of simplicity and to give a more understandable proof of the general existence theorem. We show two general existence results of $u$, given $H$ finitely generated, see Section \ref{Toy model} and  $H$ smooth, see Section \ref{sec 5TM}.\\

%Concerning the uniqueness of the classical solution of the Kolmogorov type PDE we refer to a parallel paper viewed as a conditional expectation......
In this paper we have only concentrated our efforts on the problem of existence of a solution of \eqref{kolmogorov},
the uniqueness constituting a simpler task which can be obtained as an application of a Banach space valued It\^o formula established in \cite{DGR1}.

Let $W=(W_t)_{0\leq t\leq T}$ be a classical real Brownian motion on some probability space
$(\Omega,\mathcal{F},\P)$; $(\mathcal{F}_t)$ will denote its canonical filtration.
 $\left(W_t(\cdot)\right)$ (or simply $W(\cdot)$) stands for the window Brownian process with values in $C([-T,0])$ defined by $W_t(x) :=W_{t+x}$,
see Definition \ref{dfn WPRO}.
%
% Given a continuous process $X$ we call {\bf window}
% of $X$ the $C([-T,0])$-valued process $\X = X(\cdot)$
% such that $X_t(x) = X_{t+x}, x \in [-T,0]$.

An application of our two existence results consists in obtaining a Clark-Ocone type formula
for a path-dependent random variable  $h:=H(X_T(\cdot))$,
where $X$ is a finite quadratic variation process with quadratic variation given by $[X]_t = \sigma^2 t$,
but $X$ non necessarily a semimartingale. A possible example of such process is given by $X=W+B^H$, i.e. a Brownian motion plus a fractional Brownian motion of parameter $H>1/2$ or the weak $k$-order Brownian motion of \cite{follWuYor}.
%%% RIFLETTERE AL CASO $\sigma = 0$.

%Among applications of the results contained in this paper we mention at least the uniqueness of the solution given applying the stochastic calculus in Banach space and some representation results of path dependent random variables defined on a window $X(\cdot)$ of a process $X$ not necessarily a semimartinglae, but only a finite quadratic variation process. For instance a process $X$ defined as $X=W+B^H$, i.e. a Brownian motion plus a fractional Brownian motion of parameter $H>1/2$. $X$ is finite quadratic variation process, with finite quadratic variation $[X]_t=t$ but $X$ is not a semimartingale. So the class of the processes with the same quadratic variation is much richer than only semimartingale.
Let $u$ be the solution of \eqref{kolmogorov} provided by Proposition \ref{pr VBN} or Theorem \ref{thm 2 derivate u}. By It\^o formula, see e.g. Theorem 5.2 in \cite{DGR1},
%% NON SONO SICURO CHE SONO VERIFICATE NEL NOSTRO CASO.
%%% QUI IL COMMENTO E' UN PO' FILOSOFICO
if $u$ verifies some more technical conditions then
%Let $h$ be a real valued path dependent random variables defined by $h:=H(X_T(\cdot))$. Whenever $H$ satisfies hypotheses on the terminal condition, we get the existence of a classical strict solution $u$.
%By applying the forward calculus for Banach valued window processes established in
%\cite{DGR, DGR1, DGR2, DGR3, DGRnote, DGFR, DGTesi}, the solution $u$ gives at the same time a representation result of type
\be	\label{iIntoto}
h= u(0,X_0(\cdot))+\int_{0}^{T} \mathcal{L} u (t, X_t(\cdot)) \ dt + \int_{0}^{T} D^{\delta_0} u(t,X_t(\cdot))d^- X_t,
\ee
where $\mathcal{L} $ denotes differential operator for $u\in C^{1,2}\left([0,T[\times C([-T,0])\right)$ defined by
\be
\mathcal{L} u (t, \eta):=\partial_{t}u(t,\eta)+\int_{]-t,0]} D^{\perp}_{dx}u(t,\eta)\,d^{-}\eta(x)
 +\frac{1}{2}\sigma^2\langle D^2u(t,\eta),
\1_{\{0\}} \otimes  \1_{\{0\}}\rangle,
%\frac{1}{2}\sigma^2\langle +\frac{1}{2}\sigma^2\langle D^2u(t,\eta),
\ee
 for $(t,\eta)\in [0,T]\times C([-T,0])$. Now
by \eqref{iIntoto}
 \be 	\label{iIntoto1}
 h= u(0,X_0(\cdot))+\int_{0}^{T} D^{\delta_0} u(t,X_t(\cdot))d^- X_t,
 \ee
 where we remind that $\int_0^t Y d^-X$  is the forward integral via regularization  defined first in \cite{rv1} and \cite{rv} for $X$ (respectively $Y$) a continuous (resp. locally integrable) real process, see also \cite{Rus05}
for a survey.
 Whenever $X=W$, the forward real valued integral equals the classical It\^o integral, see Proposition 1.1 in \cite{rv1}.
 In particular, if $h\in \mathbb{D}^{1,2}$, it holds that the representation stated in \eqref{iIntoto1} coincides with the classical
 Clark-Ocone formula $
h=\mathbb{E}[h] +\int_{0}^{T}\mathbb{E} \left[ D^{m}_{t}h |\shf_{t} \right] dW_{t}		\; ,
$ i.e.
 $u(0,W_0(\cdot))=\mathbb{E}[h]$ and $ D^{\delta_0} u(t,W_t(\cdot)) = D^m_t (h | \mathcal{F}_t)$,
$D^m$ denoting the Malliavin derivative.
This follows by the uniqueness of decomposition of square integrable random variables with respect to
the Brownian filtration.
We remark that our representation \eqref{iIntoto1} can be proved in some
 cases, where $h\notin \D^{1,2}$, see e.g. Section \ref{Toy model}.

%  For this reason we often refer to \eqref{iIntoto1} as Clark-Ocone type representation results. One consequence of results presented in this paper is to get a representation of type \ref{iIntoto1} for some very particular with $\sigma\equiv 1$ if Clark-Ocone formula does not apply.
% For instance when $h\notin \D^{1,2}$, or $h\notin L^{2}(\Omega)$ (even not in $L^{1}(\Omega)$).

%  A parallel paper deals with the uniqueness of the solution and other applications coming from path dependent stochastic calculus.
%Those results are not presented in this paper in reason of length of the actual version. \\

The paper is organized as follows.  After this introduction,  in Section \ref{SecPrel} we recall some preliminaries: basic notions of calculus via regularization
in finite and infinite dimension, Fr\'echet derivatives of functionals and the important subsection \ref{sec:detCalcVieRegul} about deterministic calculus via regularization.
In Section \ref{Toy model} we show the existence of a classical solution of the Kolmogorov PDE for a cylindrical $H$. Finally in Section \ref{sec 5TM} we show that existence for
$H$ being general but smooth.

\section{Preliminaries}			\label{SecPrel}

\subsection{General notations} 		\label{GN}

Let $A$ and $B$ be two general sets such that $A\subset B$;
 $\1_{A}: B\rightarrow\{ 0,1\}$ will denote the indicator function of
 the set $A$, so $\1_{A}(x)=1$ if $x\in A$ and $\1_{A}(x)=0$ if $x\notin A$.
Let $k\in \mathbb{N}\cup \{+\infty\}$,
 $C^{k}(\mathbb{R}^{n})$ indicates the set of all functions
$g:\mathbb{R}^{n}\rightarrow \mathbb{R}$ which admits all partial derivatives of order $0\leq p \leq k$ and are continuous.
If $I$ is a real interval and $g$ is a function from $I \times \mathbb{R}^{n}$ to $\mathbb{R}$ which belongs to $C^{1,2}(I \times \mathbb{R}^{n})$,
 the symbols $\partial_{t}g(t,x)$, $\partial_{i}g(t,x)$ and $\partial^{2}_{ij}g(t,x)$ will denote respectively the partial
 derivative with respect to variable $I$, the partial derivative with respect to the $i$-th
 component and the second order mixed derivative with respect to $j$-th and $i$-th component evaluated in $(t,x)\in I\times \R^{n}$.

Let $a < b$ be two real numbers, $C([a,b])$ will denote the Banach linear
 space of real continuous functions equipped with the uniform norm denoted by $\|\cdot\|_{\infty}$. Let $B$ be a
 Banach space over the scalar field $\mathbb{R}$.  The space of bounded linear mappings from $B$ to $E$ will be denoted by $L(B;E)$ and we will write
$L(B)$ instead of $L(B;B)$.
The topological dual space of $B$, i.e. $L(B; \R)$, will be denoted by $B^{\ast}$. If $\phi$ is a linear functional on $B$, we shall denote the value of $\phi$ at an element $b\in B$ either by $\phi(b)$ or $\langle \phi, b \rangle$ or even
$\prescript{}{B^{\ast}}{\langle} \phi, b\rangle_{B}$.
% NON SO SE BISOGNA PARLARE DI QUESTO. Throughout the paper the symbols $\langle\cdot,\cdot\rangle$ will denote always some type of duality that will change depending on the context.
 Let $K$ be a compact space, $\mathcal{M}(K)$ will denote the dual space $C(K)^{\ast}$, i.e. the so-called set of all real-valued finite signed measures on $K$.
In the article, if not specified, the mention  {\it absolutely continuous}
for a real-valued measure will always
refer  to the Lebesgue measure .

Let $E,F,G$ be Banach spaces;
 we shall denote the space of $G$-valued bounded bilinear forms on the product $E\times F$ by $\mathcal{B}(E\times F;G)$ with the norm given by $\|\phi\|_{\mathcal{B}}=\sup\{ \| \phi(e,f)  \|_{G}\,:\, \|e\|_{E}\leq 1; \|f\|_{F} \leq 1 \}$. If $G=\R$ we simply denote it by $\mathcal{B}(E\times F)$. We recall that $\mathcal{B}(B\times B)$ is identified with $(B\hat{\otimes}_\pi B)^\ast $, see \cite{rr,nevpg} for more details.

We recall some notions about differential calculus in Banach spaces;
for more details the reader can refer to \cite{cartan}.
Let $B$ be a Banach space. A function $F:[0,T] \times B \longrightarrow
\mathbb{R} $, is said to be $C^{1,2}([0,T]\times B)$ (Fr\'echet),
or $C^{1,2}$ (Fr\'echet), if the following three properties are fulfilled.
1. $F$ is once continuously differentiable; the partial derivative
 with  respect to $t$ will be denoted by $\partial_{t} F :[0,T]\times B
\longrightarrow \mathbb{R}$;
%with respect to $I$
2. for any $t \in [0,T]$,  $x \mapsto DF(t,x)$ is of class $C^1$
where $DF:[0,T]\times B \longrightarrow B^*$ denotes the derivative with respect to the second argument
and 3. the second order derivative with respect to the
second argument $D^2F: [0,T] \times B \rightarrow \mathcal{B}(B\times B) $ is  continuous.

If $B=C([-T,0])$, we remark that $DF$ defined on $[0,T]\times B$ takes values in $B^* \cong \mathcal{M}([-T,0])$.
For all $(t,\eta)\in [0,T]\times C([-T,0])$, we will denote by $D_{dx} F(t,\eta )$ the measure such that $\forall \; h\in C([-T,0])$
\begin{equation}   \label{eq duality deriv prima}
\prescript{}{\mathcal{M}([-T,0])}{\langle} DF(t,\eta), h \rangle_{C([-T,0])}=DF(t,\eta)(h)=\int_{[-T,0]} h(x)D_{dx} F(t,\eta ) \ .
\end{equation}
 Whenever $B=E=F=C([-T,0])$, then the space of finite signed
Borel measures on $[-T,0]^2$ is included in the space $\mathcal{B}(B\times B)$ in the following way:
  \begin{equation}\label{eqai}	
\prescript{}{\mathcal{M}([-T,0]^{2})}{\langle} \mu, \eta\rangle_{C([-T,0]^{2})} =\int_{[-T,0]^{2}}\eta(x,y)\mu(dx,dy)=\int_{[-T,0]^{2}}\eta_{1}(x)\eta_{2}(y)\mu(dx,dy)  \; .
\end{equation}
%%%% CRI NECESSARIO QUESTO? MESSO IN COMMENTO PER IL MOMENTO
% Recalling that $\mathcal{M}([-T,0]^{2})\subset\mathcal{B}(C([-T,0])\times C([-T,0]))$, if $D^{2}F\,(t, \eta)\in \mathcal{M}([-T,0]^{2})$
% for all $(t,\eta) \in [0,T]\times C([-T,0])$ (which will happen in most of
%  the treated cases)
% we will denote with $D^{2}_{dx\,dy} F(t, \eta)$, or $D_{dx}D_{dy}F(t,\eta)$, the measure on $[-T,0]^{2}$ such that
% \begin{equation}		\label{eq duality deriv seconda}
% \prescript{}{\mathcal{M}([-T,0]^{2})}{\langle} D^{2}F(t,\eta), g \rangle_{C([-T,0]^{2})} =
% D^{2}F(t,\eta)(g)  = \int_{[-T,0]^{2}} g(x,y)\,D^{2}_{dx\,dy} F(t,\eta) \hspace{1cm} \forall \; g\in C([-T,0]^{2}).\\
% \end{equation}

We convene that the continuous functions (and real processes) defined
on $[0,T]$ or $[-T,0]$,
 are extended by continuity to the real line.
% and $X(\cdot)$ be a {\it window} process associated
%to a real continuous process $X$.
\begin{dfn}		\label{dfn WPRO}
Given  a real continuous process $X=(X_{t})_{t\in [0,T]}$, we will call \textbf{window process}, and denote by $X(\cdot)$,
the $C([-T,0])$-valued process
\begin{equation*}
X(\cdot)=\big(X_{t}(\cdot)\big)_{t\in [0,T]}=\{X_{t}(x):=X_{t+x};
x\in [-T,0], t\in [0,T]\} \; .
\end{equation*}
\end{dfn}
%% CRI. NECESSARIO METTERE QUESTO?
%Process $\left( X_{t}(\cdot)\right)_{ t\in [0,T]}$ will be also denoted by
%symbols $X(\cdot)$ or $\left\{X_{t}(\cdot);t\in [0,T] \right\} $.
$X(\cdot)$ will be  understood, sometimes
without explicit mention, as $C([-T,0])$-valued.
In this paper $B$ will be often taken to be $C([-T,0])$.

We recall now the integration by parts in Wiener space. Let $\delta$ be the {\it Skorohod integral}  or the adjoint operator of Malliavin derivative $D^{m}$ as defined in
Definition 1.3.1 in \cite{nualartSEd}.
If $u$ belongs to $Dom \; \delta$, then $\delta(u)$ is an element of $L^{2}(\Omega)$ characterized, for any $F\in \D^{1,2}$, by
\begin{equation}		\label{eq INTBPWiener}
\mathbb{E}\left[ F\, \delta(u) \right]=\mathbb{E}\left[ \int_0^{T} D^{m}_{t} F\;  u_{t} \; dt\right].
\end{equation}

\subsection{Deterministic calculus via regularization}		\label{sec:detCalcVieRegul}
Let $-T\leq a\leq b\leq 0$, we will essentially concentrate on the definite integral  on an interval  $J = ]a,b]$ and $\bar{J}=[a,b]$,where  $ a < b$ are two real numbers.
Typically, in our applications we will consider $a = -T$ or $a = - t$ and $b= 0$.  That integral will be a real number.

We start with a convention. If $f: [a,b] \rightarrow \R$ is a c\`adl\`ag function,
we extend it naturally to two possible c\`adl\`ag functions $f_J$ and $f_{\bar J} $ in real line setting
\begin{equation*}
f_{J} (x) = \left \{
\begin{array}{ccc}
f(b) &:& x > b ,  \\
 f(x)  &:& x  \in [a,b],   \\
 f(a)  &:& x  < a.
\end{array}
\right.
\quad \quad
\mbox{and}
\quad \quad
f_{\bar J} (x) = \left \{
\begin{array}{ccc}
f(b) &:& x > b , \\
 f(x)  &:& x  \in [a,b]  , \\
 0  &:& x  < a.
\end{array}
\right.
\end{equation*}

\begin{dfn}		\label{def FRW and BAWKW det int}
Let $\mu$ be a finite Borel measure on $[0,T]$, $\mu\in \mathcal{M}([-T,0])$ and $f: [a,b] \rightarrow \R$ be a c\`adl\`ag function.
We define the {\bf deterministic forward integral} on $J=]a,b]$ and on $\bar J= [a,b]$ denoted by
\begin{equation}		\label{eqFVB}
\int_{]a,b]} \mu(dx) d^-f(x) \ \Big( \mbox{or simply} \int_{]a,b]} \mu d^-f \Big) \mbox{ and}
  \int_{[a,b]} \mu(dx) d^-f(x) \ \Big( \mbox{or simply} \int_{[a,b]} \mu d^-f   \Big)
\end{equation}
as the limit of
\begin{equation}
I^-(]a,b],f,\epsilon)= \int_{]a,b]}  \frac{f_{J}(x+\epsilon) - f_{J}(x)}{\epsilon}
  \mu(dx)
  \end{equation}
 and of
 \begin{equation}
I^-([a,b],f,\epsilon)= \int_{[a,b]}  \frac{f_{\bar J}(x+\epsilon) - f_{\bar J}(x)}{\epsilon}
  \mu(dx),
  \end{equation}
when $\epsilon \downarrow 0$, provided it exists.\\
%We define the {\bf deterministic backward integral} on $J=]a,b]$ and on $\bar J= [a,b]$ denoted by
%\begin{equation}		\label{eqBVB}
%\int_{]a,b]} \mu(dx) d^+f(x) \quad \mbox{ or simply } \int_{]a,b]} \mu d^+f \quad\quad\mbox{and  } \quad\quad
%  \int_{[a,b]} \mu(dx) d^+f(x) \quad \mbox{ or simply } \int_{[a,b]} \mu d^+f
%\end{equation}
%as the limit of
%\begin{equation}
%I^+ (]a,b],f,\epsilon)= \int_{]a,b]}  \frac{f_{J}(x) - f_{J}(x-\epsilon)}{\epsilon}
%  \mu(dx)
%  \quad\quad\mbox{  and of  }\quad\quad
%I^+([a,b],f,\epsilon)= \int_{[a,b]}  \frac{f_{\bar J}(x) - f_{\bar J}(x-\epsilon)}{\epsilon}
%  \mu(dx),
%  \end{equation}
%when $\epsilon \downarrow 0$, provided it exists.

If $\mu$ is absolutely continuous
%(with respect to the Lebesgue measure on the reals)
 we denote by $\mu^{ac}$ the
density with respect to Lebesgue measure.
In this case we set
\be
 \label{DetIAc}
  \int_{]a,b]} \mu d^-f :=  \int_{]a,b]} \mu^{ac} d^-f, \quad
 \int_{[a,b]} \mu d^-f :=  \int_{[a,b]} \mu^{ac} d^-f.
\ee
%If
%$\mu^{ac}$ has bounded variation, by convention it will taken
%c\`adl\`ag.

\end{dfn}

The first  integral on $]a,b]$ appears in the path-dependent PDE \eqref{kolmogorov}, the second one on the closed interval $[a,b]$ is fundamental in Section \ref{Toy model}.
The proposition below discusses the case when $f$ or
$\mu$ is absolutely
continuous.
% (with respect to the Lebesgue measure on the reals).
% with density
% $\mu^{ac}$
%having bounded variation.

%shows that whenever $\mu$ is absolutely
%continuous with density having bounded variation, $\int_{]a,b]} \mu d^-f$ equals the integration by part formula \eqref{eq BACDI2}.

\begin{prop}	\label{prop A}
Let $\mu(dx)=\mu^{ac}(x)dx$, i.e. $\mu$ be absolutely continuous with Radon-Nikodym derivative density denoted by $\mu^{ac}$. By default, the bounded variation functions will be considered as c\`adl\`ag.
\begin{enumerate}
\item If $f$ has bounded variation then
\be	\label{eq454}
%\int_{]a,b]}  \mu(dx) d^-f(x)=
\int_{]a,b]}\mu^{ac}(x) d^-f(x)= \int_{]a,b]} \mu^{ac}(x-) df (x) \ \ \mbox{(classical Lebesgue-Stieltjes integral).}
\ee
In particular, whenever $\mu^{ac} \equiv 1$,  $\int_{]a,b]} \mu^{ac}(x) d^-f(x) = f(b) - f(a). $\\
\item If the function $\mu^{ac}:[a,b]\longrightarrow \R$ is c\`adl\`ag with  bounded variation, then
\begin{enumerate}
\item
%\be	\label{eq BACDI}
%\int_{[a,b]}\mu(dx)d^+ f(x)=\int_{[a,b]}\mu^{ac}(x) d^+ f(x)=\mu^{ac}(b)f(b)-\mu^{ac}(a)f(a)-\int_{]a,b]}f(x)d\mu^{ac}(x)
%\quad \mbox{and}
%\ee
\be	
\label{eq D22}
%\int_{[a,b]}\mu(dx)d^-  f(x)=
\int_{[a,b]}\mu^{ac}(x)d^-f (x)=\mu^{ac}(b)f(b)-\int_{]a,b]}f(x)d\mu^{ac}(x),
\ee %%% Cosso Prop 4
\item
\be	\label{eq BACDI2}
%\int_{]a,b]}\mu(dx)d^- f(x)=
\int_{]a,b]}\mu^{ac}(x) d^- f(x)=\mu^{ac}(b)f(b)-\mu^{ac}(a)f(a)-\int_{a}^{b}f(x)d\mu^{ac}(x).
\ee
%\item
%\be 			\label{eq BACDI}
%\int_{]a,b]} \mu(dx)d^+f(x)=\int_{]a,b]} \mu^{ac}(x)d^+f(x)=\mu^{ac}(b)f(b^-)-\mu^{ac}(a)f(a)-\int_{]a,b]}f(x)\mu^{ac}(dx)
%\ee
%where $f(b^-)=\lim_{x\rightarrow b^-} f(x)$.
%\item In particular, whenever $f$ is continuous in $b$ we have that $f(b^-)=f(b)$. It follows directly by equality of right-hand terms of \eqref{eq D22} and \eqref{eq BACDI}, that
%\be		\label{eqFBU}
%\int_{]a,b]} \mu(dx)d^+f(x)=\int_{]a,b]}\mu(dx)d^- f(x)\ .
%\ee
\end{enumerate}
\end{enumerate}
\end{prop}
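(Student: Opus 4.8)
The plan is to work directly from Definition \ref{def FRW and BAWKW det int}, i.e.\ to compute the regularization limit $\lim_{\epsilon\downarrow 0} I^-(J,f,\epsilon)$ explicitly in each case, exploiting that $\mu^{ac}$ is a genuine function so that $I^-$ is an ordinary Lebesgue integral against $\mu^{ac}(x)\,dx$, and then to recognise the limit via classical integration-by-parts formulae for Lebesgue--Stieltjes integrals.

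\medskip

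\emph{Part 1.} Here $f$ has bounded variation. I would write
$I^-(]a,b],f,\epsilon)=\frac1\epsilon\int_{]a,b]}\mu^{ac}(x)\,(f_J(x+\epsilon)-f_J(x))\,dx$ and use Fubini, representing $f_J(x+\epsilon)-f_J(x)=\int_{]x,x+\epsilon]} df_J$ (the measure $df_J$ being the Lebesgue--Stieltjes measure of the extended function $f_J$, which is supported on $[a,b]$). Swapping the order of integration turns the expression into $\int_{]a,b]}\Big(\frac1\epsilon\int_{[y-\epsilon,y[}\mu^{ac}(x)\,dx\Big)\,df_J(y)$, and since $\mu^{ac}\in L^1_{loc}$ the inner average converges to $\mu^{ac}(y-)$ for a.e.\ $y$ (Lebesgue differentiation), while staying bounded on a neighbourhood of $[a,b]$; dominated convergence for the finite measure $|df_J|$ then gives $\int_{]a,b]}\mu^{ac}(y-)\,df(y)$, which is \eqref{eq454}. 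The special case $\mu^{ac}\equiv 1$ is immediate since $df_J(]a,b])=f(b)-f(a)$. The only delicate point is handling the boundary behaviour of the extension $f_J$ near $a$ and $b$, but since $f_J$ is constant outside $[a,b]$ no extra mass is created there, so the half-open interval $]a,b]$ is exactly the right domain.

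\medskip

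\emph{Part 2.} Now the roles are reversed: $\mu^{ac}$ is c\`adl\`ag of bounded variation and $f$ is merely c\`adl\`ag. For (a), the object is the integral on the \emph{closed} interval, so the relevant extension is $f_{\bar J}$, which vanishes for $x<a$. I would again write $I^-([a,b],f,\epsilon)=\frac1\epsilon\int_{[a,b]}\mu^{ac}(x)\big(f_{\bar J}(x+\epsilon)-f_{\bar J}(x)\big)\,dx$ and split it as a difference of two shifted integrals, $\frac1\epsilon\int \mu^{ac}(x)f_{\bar J}(x+\epsilon)\,dx-\frac1\epsilon\int\mu^{ac}(x)f_{\bar J}(x)\,dx$; after the change of variable $x\mapsto x-\epsilon$ in the first one this becomes $\frac1\epsilon\int \big(\mu^{ac}(x-\epsilon)-\mu^{ac}(x)\big)f_{\bar J}(x)\,dx$ plus a boundary term of order $\epsilon$. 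Writing $\mu^{ac}(x)-\mu^{ac}(x-\epsilon)=\int_{]x-\epsilon,x]}d\mu^{ac}$ and applying Fubini exactly as before converts the limit into $\mu^{ac}(b)f(b)-\int_{]a,b]} f(x)\,d\mu^{ac}(x)$, which is \eqref{eq D22}; the surviving boundary contribution $\mu^{ac}(b)f(b)$ comes from the portion of the shift that pushes mass past $b$ (recall $f_{\bar J}(a-)=0$, so there is no contribution at $a$). Formula \eqref{eq BACDI2} for the half-open interval then follows either by redoing the same computation with $f_J$ in place of $f_{\bar J}$ (now $f_J(a-)=f(a)$ produces the extra term $-\mu^{ac}(a)f(a)$, and the closed-interval Lebesgue--Stieltjes integral $\int_{]a,b]}f\,d\mu^{ac}$ coincides with $\int_a^b f\,d\mu^{ac}$ up to the possible atom of $\mu^{ac}$ at $a$, which is precisely absorbed), or more simply by combining \eqref{eq D22} with the elementary identity $\int_{[a,b]}\mu d^-f-\int_{]a,b]}\mu d^-f=\mu^{ac}(a)f(a)$ obtained by comparing the defining Riemann sums on the two intervals.

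\medskip

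I expect the main obstacle to be bookkeeping rather than anything deep: getting the endpoint terms exactly right — i.e.\ tracking which values $f_J$, $f_{\bar J}$, $\mu^{ac}$ take just outside $[a,b]$ and making sure the mass transported across $a$ and $b$ by the $\epsilon$-shift is accounted for with the correct sign and limit value (left limit vs.\ value at the point). The analytic core (Fubini plus Lebesgue differentiation plus dominated convergence against a finite variation measure) is routine once the integrals are set up, so the care really lies in the interplay between the one-sided interval conventions of Definition \ref{def FRW and BAWKW det int} and the c\`adl\`ag normalisation.
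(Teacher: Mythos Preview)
Your approach is correct and considerably more explicit than the paper's own argument. The paper does not actually carry out any computation: it simply remarks that the definitions are compatible with those in \cite{cosso_russo14} (Definitions~4 and~18, Proposition~8 there), invokes Proposition~4 of that reference to obtain item~2(a), and leaves the remaining items to the reader as ``similar considerations''. So where the paper appeals to an external source, you work directly from Definition~\ref{def FRW and BAWKW det int} via Fubini, the Lebesgue differentiation theorem, and dominated convergence against a finite-variation measure --- which is precisely the content hidden behind that citation. Your diagnosis that the only genuine care required is the endpoint bookkeeping (tracking which extension $f_J$ or $f_{\bar J}$ is in play, and what happens to the mass pushed past $a$ or $b$ by the $\epsilon$-shift) is exactly right. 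One small point worth tightening: the one-sided average $\frac{1}{\epsilon}\int_{y-\epsilon}^{y}\mu^{ac}(x)\,dx$ converges to $\mu^{ac}(y-)$ only when $\mu^{ac}$ actually has a left limit at $y$, so the appearance of $\mu^{ac}(x-)$ in \eqref{eq454} tacitly presumes this regularity --- which is harmless here since in every application of the proposition the density is smooth.
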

\begin{proof}
\
The statements follow directly from the definition. Concerning the case
when the integration interval is $[a,b]$ we remark that our
definition is compatible with Definitions 4, 18, see also Proposition
8 of \cite{cosso_russo14}. By Proposition 4 ibidem,
we get item 2.(a). The other items can be established by similar
considerations and are left to the reader.
% The proof follow similarly for the stochastic cases. For instance the reader may consult \cite{rv93} for similar considerations.
\end{proof}

\section{The existence result for cylindrical terminal condition}
\label{Toy model}
					%LA F MI BASTA CHE SIA MISURABILE A CRESCITA LINEARE MA CHE LE $\phi_{i}$ SIANO ORTOGONALI IN $L^{2}$\\
The central object of this section is Proposition \ref{pr VBN}  which gives an
existence result of the solution of the path-dependent heat equation
\eqref{kolmogorov} when the terminal condition $H$
depends on a finite number of integrals, but it is not
necessarily smooth. As we mentioned, here the idea is to exploit the
non-degeneracy aspect of the Brownian motion in the sense
that the covariance matrix of every finite dimensional
distribution is invertible.
% dependence.
% much less regularity on $H$ with respect to next Section \ref{sec 5TM}. $H$ has only to be continuous but it has a particular form.
%
%{\color{red}  RILEGGERE per $\sigma$}
In this section the standard deviation parameter $\sigma$ will be supposed  to be different from $0$.
This in opposition to the case of Section \ref{sec 5TM} where $H$ is  Fr\'echet smooth, but not necessarily
cylindrical; there $\sigma$ is allowed even to vanish.\\

We introduce now  the functional $H$.
% and then we will construct a function $u$.\\
For all $i=1,\ldots,n$, let $\varphi_{i}:[0,T]\longrightarrow \mathbb{R}$ be $C^{2}([0,T];\mathbb{R})$.
%, then they are in particular $H^{1}([0,T];\mathbb{R})$-valued functions, i.e.
%there exists $\dot{\varphi_{i}}\in L^{2}([0,T])$
%such that $\varphi_{i}(T)-\varphi_{i}(0)=\int_{0}^{T}\dot{\varphi_{i}}(s)ds$.
%Moreover $\dot{\varphi_{i}}$ have bounded variation for all $i=1\ldots,n$. %funzion $C^1$ are BV
%Without restriction of generality we can suppose
%$\varphi_{1},\ldots,\varphi_{n}$ orthogonal in the space $L^{2}([0,T])$ and
Let $f:\mathbb{R}^{n}\rightarrow \mathbb{R}$ be measurable and with linear growth.
We consider the functional
\[
H:C([-T,0])\rightarrow \mathbb{R}
\]
defined by
\begin{equation}	\label{eq 8.75}
H(\eta)=
f\left( \int_{[-T,0]}\varphi_{1}(u+T)d^-\eta (u),\ldots,\int_{[-T,0]}\varphi_{n}(u+T)d^-\eta (u) \right)  \; .
\end{equation}
%FARE UN PRECISO RIFERIMENTO CHE ESISTONO QUANDO SMOOTH
%% NON PARLARE DI PROBABILITA QUI
%
%
%Let $X$ be again a real continuous process such that $X_{0}=0$ and $[X]_{t}=\sigma^2 t$.
%Recalling Definition \ref{def FRW and BAWKW det int}
We recall  that for smooth $\varphi_i$, $i\in \{1,\ldots,n\}$,
the deterministic integral $\int_{[-T,0]}\varphi_i(u+T)d^-\eta(u)$ exists pointwise, according to Definition \ref{def FRW and BAWKW det int}.
%It exists also in probability, so the random variable
%By Proposition \ref{prop A} the r.v. $h:=H(W_{T}(\cdot))$ can be expressed as
That integral exists since, by \eqref{eq D22} in Proposition \ref{prop A},
we have
\be 		\label{eq VVV}
\int_{[-T,0]}\varphi_{i}(u+T)d^-\eta (u)=\varphi_{i}(T)\eta(0)-\int_{0}^{T}\eta(s-T)d\varphi_{i}(s).
\ee
So, replacing $\eta$ by the random path $ \sigma W_T(\cdot)$
%in \eqref{eq VVV}
in \eqref{eq 8.75} we get
%The reason is because the convergence for every realization $\omega$
%implies of course the convergence in probability.
\begin{equation}		\label{eq E677}
\begin{split}
h=H(W_{T}(\cdot))& =f\left( \sigma \int_{[-T,0]}\varphi_{1}(u+T)d^{-}W_{T}(u),\ldots,  \sigma \int_{[-T,0]}\varphi_{n}(u+T)d^{-}W_{T}(u) \right)\\
&=f \left( \sigma \int_{0}^{T}\varphi_{1}(s)d^{-}W_{s},\ldots, \sigma \int_{0}^{T}\varphi_{n}(s)d^{-}W_{s} \right)\\
&=f \left( \sigma \int_{0}^{T}\varphi_{1}(s)dW_{s},\ldots,  \sigma \int_{0}^{T}\varphi_{n}(s)dW_{s} \right).
\end{split}
\end{equation}
% where the integrals in the second line represent
% the usual stochastic integrals via regularization in the stochastic case.
% The latter equality is justified because forward integrals extend
% It\^o's integrals.
We stress that in the first line of \eqref{eq E677} the integrands
 are deterministic forward integrals; those integrals  exist pathwise, however in the second line of \eqref{eq E677} there appear
stochastic forward integrals. The second equality is justified
because the convergence for every realization $\omega$
implies of course the convergence in probability,
which characterizes the stochastic forward integral.
The latter equality holds because  It\^o integrals with Brownian motion
are also forward integrals,
% as we see in the third  line of \eqref{eq E677},
 see Proposition 1.1 in \cite{rv1}.
On the other hand, for every $i\in \{ 1,\ldots, n\}$, since
 $\varphi_{i}$ are of class $C^2$ then Proposition \ref{prop A} and in particular \eqref{eq D22} gives
\begin{equation}  \label{ipparti}
\begin{split}
\int_{0}^{t}\varphi_{i}(s)d^{-}W_{s}&=\int_{[-t,0]}\varphi_{i}(u+t)d^{-}W_{t}(u)=
%\varphi_{i}(t)X_{t}(0)-\varphi_{i}(0)X_{t}(-t)-\int_{[-t,0]}X_{t}(u)d\varphi_{i}(u+t)=
\varphi_{i}(t)W_{t}-\int_{0}^t W_{s}d\varphi_{i}(s),
\end{split}
\end{equation}
 where the first equality holds by similar reasons as
for the first equality in \eqref{eq E677}.
% the convergence in probability characterizing
% the forward stochastic integral in the left-hand side
% is compatible with the a.s. convergence of the integrals on the right-hand side.
 The second equality holds by \eqref{eq D22}.

%so that previous integrals can be characterized pathwise. \\
% Expression \eqref{ipparti} can be also established via integration by parts \eqref{eq IBP BVPR} for stochastic processes getting
% \begin{equation}
% \begin{split}
% \int_{0}^{t}\varphi_{i}(s)d^{-}W_{s}&=\int_{[-t,0]}\varphi_{i}(u+t)d^{-}W_{t}(u)=\varphi_{i}(t)W_{t}(0)-\varphi_{i}(0)W_{t}(-t)-\int_{[-t,0]}W_{t}(u)d\varphi_{i}(u+t)=\\
% &=
% \varphi_{i}(t)W_{t}-\int_{0}^t W_{s}d\varphi_{i}(s).
% \end{split}
% \end{equation}
We formulate the following {\it non-degeneracy} assumption.
\begin{ass}		\label{ass Sigmat}
For $t\in [0,T]$, we denote by $\Sigma_{t}$ the matrix in $\mathbb{M}_{n\times n}(\R)$ defined by
$$\left( \Sigma_{t}\right)_{1\leq i,j\leq n} = \left(   \int_{t}^{T}\varphi_{i}(s)\varphi_{j}(s)ds \right)_{1\leq i,j\leq n}.$$
We suppose $
\det\left( \Sigma_{t} \right)>0$ $\forall \, t\in [0,T[$.
\end{ass}
\begin{rem}
\begin{enumerate}
\item We observe that, by continuity of function
$t\mapsto \det\left( \Sigma_{t} \right) $, there is always $\tau > 0$ such that $\det\left( \Sigma_{t} \right)\neq 0 $ for all $t\in [0,\tau[$.
\item It is not restrictive to consider $\det\left( \Sigma_{0} \right)\neq 0 $ since it is always possible to orthogonalize
$(\varphi_{i})_{i=1,\ldots,n}$ in $L^{2}([0,T])$ via a Gram-Schmidt procedure.
%\item When the family is orthogonal, $\Sigma_{0}$ is a diagonal invertible matrix in $M_{n\times n}(\mathbb{R})$.
\end{enumerate}
\end{rem}
%In view of defining a functional $u:[0,T]\times C([-T,0])\longrightarrow \R$,

We remember that $W$ is a classical Wiener process equipped with its canonical filtration $(\shf_{t})$.
We set $h=H(  W_{T}(\cdot))$ and we evaluate the conditional expectation $\mathbb{E}\left[ h\vert \shf_{t} \right]$.
It gives
\be \label{E3100}
\begin{split}
\mathbb{E}[h | \mathcal{F}_{t}]
&
=\mathbb{E}\left[ f \left( \sigma \int_{0}^{T}\varphi_{i}(s)dW_{s},\ldots, \sigma \int_{0}^{T}\varphi_{n}(s)dW_{s} \right) |\mathcal{F}_{t} \right] \\
%&
%=\mathbb{E}\left[ f \left( \int_{0}^{t}\varphi_{1}(s)dW_{s}+\int_{t}^{T}\varphi_{1}(s)dW_{s},\ldots,\int_{0}^{t}\varphi_{n}(s)dW_{s}+\int_{t}^{T}\varphi_{n}(s)dW_{s} \right) |\mathcal{F}_{t} \right]=
%\\
&
=\Psi \left(t, \sigma  \int_{0}^{t}\varphi_{1}(s)dW_{s},\ldots, \sigma \int_{0}^{t}\varphi_{n}(s)dW_{s} \right)
  \\
&
=\Psi \left( t, \int_{[-t,0]}\varphi_{1}(u+t)d^- \sigma W_{t}(u), \ldots,  \int_{[-t,0]}  \varphi_{n}(u+t)d^- \sigma W_{t}(u) \right)
\\
&
=\Psi \left( t, \int_{[-T,0]} \varphi_{1}(u+t)d^- \sigma W_{t}(u) , \ldots ,
 \int_{[-T,0]}  \varphi_{n}(u+t)d^- \sigma W_{t}(u)
 \right) \; ,
\end{split}
\ee
where the function $\Psi: [0,T] \times \mathbb{R}^{n}\longrightarrow \mathbb{R}$ is defined by
\begin{equation}  \label{definizione Psi}
\Psi(t, y_{1},\ldots,y_{n})=\mathbb{E}\left[ f\left( y_{1}+ \sigma \int_{t}^{T}\varphi_{1}(s)dW_{s},\ldots \ldots, y_{n} +
\sigma \int_{t}^{T}\varphi_{n}(s)dW_{s} \right) \right],
\end{equation}
for any $t \in [0,T], y_1, \ldots, y_n \in \R$.
In particular
\begin{equation}
\Psi(T,y_{1},\ldots,y_{n})= f\left( y_{1},\ldots \ldots, y_{n}\right)		\; .
\end{equation}
%
%
%In this case $\Psi\in C^{1,2}( [0,T[ \times \mathbb{R}^{n})$.
The  second equality in \eqref{E3100}
holds because for every $1 \le i \le n$
$$ \int_0^t \varphi_n(s) \sigma dW_s =
  \int_{-t}^{0}\varphi_{n}(u+t)d^- \sigma W_{t}(u),$$
for the same reasons as in \eqref{ipparti}.
We evaluate expression \eqref{definizione Psi} introducing the density function $p$ of the Gaussian vector
$$\left(   \int_{t}^{T}\varphi_{1}(s)dW_{s} , \ldots ,\int_{t}^{T}\varphi_{n}(s)dW_{s} \right)\ , $$ whose covariance matrix equals to $\Sigma_{t}$. The function
 $p: [0,T]\times \mathbb{R}^{n}\longrightarrow \mathbb{R}$ is characterized by
\begin{equation*}
p(t,z_{1},\ldots, z_{n})=\sqrt{\frac{1}{(2\pi)^{n} \det \left( \Sigma_{t} \right) }} \exp \left\{  -\frac{(z_{1},\ldots, z_{n}) \Sigma^{-1}_{t} (z_{1},\ldots, z_{n})^{\ast}}{2}  \right\}  \; ,
\end{equation*}
and function $\Psi$ becomes
\begin{equation}	\label{eq-def 8.77c}
\Psi(t,y_{1},\ldots,y_{n})=
\left\{
\begin{array}{l}
%\int_{\mathbb{R}^{n}} f\left(y_{1}+z_{1} , \ldots, y_{n}+z_{n} \right) p(t, z_{1}, \ldots, z_{n} )dz_{1}\cdots dz_{n}=\\
\int_{\mathbb{R}^{n}}f\left( \tilde{z}_{1},\ldots, \tilde{z}_{n} \right)p\left(t, \frac{\tilde{z}_{1}-y_{1}}{\sigma},
 \ldots, \frac{\tilde{z}_{n}-y_{n}}{\sigma} \right) d\tilde{z}_{1}\cdots d\tilde{z}_{n},  \textrm{ if } t\in [0,T[\\
f\left( y_{1},\ldots \ldots, y_{n}\right),
 \hspace{3cm} \textrm{if } t=T   \; .
\end{array}
\right.
\end{equation}

\begin{rem} \label{RPsi}
\begin{enumerate}
\item If $f$ is not continuous, we remark that, at time $t=T$,
 $\Psi(T,\cdot)$ is a function
which strictly depends  on the representative of $f$ and not only on
its Lebesgue a.e. representative.
So $\Psi$, as a class, does not admit  a restriction to $t = T$.
% {\color{blue} FRA: non capisco che senso ha il primo item? E inoltre $\Psi(T,\cdot)$ \`e proprio $f\left( y_{1},\ldots \ldots, y_{n}\right) $?? cosa vuol dire???}
\item The function $p$ is a $C^{3,\infty}([0,T[\times \mathbb{R}^{n})$ solution of
\begin{equation}	\label{eq EDDP}
\partial_{t} p (t,z_{1},\ldots, z_{n})=-\frac{1}{2} \sum_{i,j=1}^{n}\varphi_{i}(t)\varphi_{j}(t) \partial^{2}_{ij} p(t,z_{1},\ldots, z_{n})  \; .
\end{equation}
%%%%%%%%%%%%%%%%%%%%%%%%%%%%%%%
%
%
Therefore the function $\Psi$ is $C^{1,2}([0,T[\times \mathbb{R}^{n})$ and solves
\begin{equation}    \label{EDP per Psi}
\partial_{t}\Psi (t,z_{1},\ldots, z_{n}) =-\frac{\sigma^2}{2} \sum_{i,j=1}^{n}\varphi_{i}(t)\varphi_{j}(t)\partial^{2}_{ij}
\Psi (t,z_{1},\ldots, z_{n}).
\end{equation}
\end{enumerate}
\end{rem}
%
%because of the ortogonality of $\varphi_{i}$ functions and the property of gaussian density.
%
%\begin{displaymath}
%\mathbb{E}\left[ f\left( y_{1}+\int_{t}^{T}\varphi_{1}(s)dW_{s},\ldots \ldots, y_{n}+\int_{t}^{T}\varphi_{n}(s)dW_{s} \right) \right]=
%\Psi  \left( t, \int_{0}^{t}\varphi_{1}(s)dW_{t}(s-t),\ldots,\int_{0}^{t}\varphi_{n} dW_{t}(s-t) \right)
%\end{displaymath}
%with $\Psi: [0,T] \times \mathbb{R}^{n}\rightarrow \mathbb{R}$, $\Psi\in C^{1,2}( [0,T[ \times \mathbb{R}^{n})$
%\begin{displaymath}
%\Psi(t,y_{1},\ldots,y_{n})=\int_{\mathbb{R}^{n}} f\left( y_{1}+z_{1},\ldots, y_{n}+z_{n} \right)p(t, z_{1}, \ldots, z_{n} )dz_{1}\cdots dz_{n}=
%\end{displaymath}
%
%
We define now a function $
u: [0,T] \times C([-T,0])\longrightarrow \mathbb{R} %\quad \quad u\in C^{1,2}([0,T]\times C([-T,0]))
$
by
\begin{equation}	\label{eq-def 8.77b}
u(t,\eta)=\Psi \left( t,\int_{[-t, 0]}\varphi_{1}(s+t)d^-\eta(s),\ldots, \int_{[-t, 0]}\varphi_{n}(s+t)d^-\eta(s)\right)  \; ,
%u(t,\eta)=\Psi \left( t,\varphi_{1}(t)\eta(0)-\int_{[0,t]}\eta(s-t) d\varphi_{1}(s),\ldots , \varphi_{n}(t)\eta(0)-\int_{[0,t]}\eta(s-t) d\varphi_{n}(s)  \right)
\end{equation}
where $\Psi(t,y_{1},\ldots,y_{n})$ is defined by \eqref{eq-def 8.77c}.

%
%We note that $\varphi_{i}$ is a bounded variation function for every $i=1,\ldots,n$, it is even $C^{2}$ so term
%$\varphi_{i}(t)\eta(0)-\int_{[0,t]}\eta(s-t) d\varphi_{i}(s)=\eta(0)\varphi_{i}(t) -\int_{[0,t]}\eta(s-t) \dot{\varphi}_{i}(s)ds$ equals by convention
%\ref{nota c\`adl\`agINT} to $\int_{[-t,0]}\varphi_{i}(s+t)d\eta(s)$, in fact by the fact that $\varphi_{i}(0^{-})=0$ we have
%\begin{equation}
%\begin{split}
%\int_{[-t,0]}\varphi_{i}(s+t)d\eta(s)=
%\int_{[0,t]}\varphi_{i}(s)d\eta(s-t)&:= \varphi_{i}(t)\eta(0)-\int_{[0,t]}\eta(s-t) d\varphi_{i}(s)=\\
%			&=\eta(0)\varphi_{i}(t)-\eta(-t^{-})\varphi_{i}(0^{-}) -\int_{[0,t]}\eta(s-t) \dot{\varphi}_{i}(s)ds=\\
%			&=\eta(0)\varphi_{i}(t) -\int_{[0,t]}\eta(s-t) \dot{\varphi}_{i}(s)ds\\% -\eta(-t)\varphi_{i}(0)    Questo non c'e ma serebbe l'integrale nel punto ZERO, ¬é 																											facessimo un integrale su un aperto, ¬é un integrale nella 																											toeria dei salti
%\end{split}
%\end{equation}
%
%
%Analogously we will use the notation for the element in $[0,T]\times \mathbb{R}^{n}$ equal to
%\begin{equation} \begin{split}
%& \left(t, \int_{-t}^{0} \varphi (s+t)d\eta(s) \right)=\left(t, \int_{0}^{t}\varphi(s)d\eta(s-t)\right)=\\
%&=\left(t,  \varphi_{1}(t)\eta(0)-\int_{0}^{t}\eta(s-t) \dot{\varphi}_{1}(s)ds,\ldots,   \varphi_{n}(t)\eta(0)-\int_{0}^{t}\eta(s-t) \dot{\varphi}_{n}(s)ds \right)=\\
%&=\left(t, \int_{-t}^{0}\varphi_{1}(s+t)d\eta(s), \ldots, \int_{-t}^{0}\varphi_{n}(s+t)d\eta(s)\right)
%\end{split}
%\end{equation}
%
By the fact that, for every $i$, the functions $\varphi_{i}$ are $C^{2}$, so in
particular with bounded variation,
similarly to \eqref{eq VVV} we can write
\be		\label{eqBVBB}
\int_{[-t,0]}\varphi_{i}(s+t)d^-\eta(s)=\eta(0)\varphi_{i}(t) -\int_{0}^{t}\eta(s-t) \dot{\varphi}_{i}(s)ds,
 \;  
\ee
see  \eqref{eq D22}.
%Obviously $\int_{0}^{t}\eta(s-t) d \varphi_{i}(s)=\int_{0}^{t}\eta(s-t) \dot{\varphi}_{i}(s)ds$ being a Lebesgue integral.
%With this notation we have another expression for the function $u$
%\begin{equation}		\label{eq-def u8.88}
%\begin{split}	
%u(t,\eta)=& \Psi \left( t, \int_{[-t, 0]}\varphi_{1}(s+t)d\eta(s),\ldots, \int_{[-t, 0]}\varphi_{n}(s+t)d\eta(s)\right)=\\
%& =\mathbb{E}\left[  f\left( \int_{[-t, 0]}\varphi_{1}(s+t)d\eta(s)+\int_{t}^{T}\varphi_{1}(s)dW_{s},\ldots \ldots,  \int_{[-t, 0]}\varphi_{n}(s+t)d\eta(s)+\int_{t}^{T}\varphi_{n}(s)dW_{s} \right)\right] \\
%\end{split}
%\end{equation}
%
\begin{rem}		\label{rem AATTCOND}
By construction we have
$
u(t, \sigma W_{t}(\cdot))=\mathbb{E}[h\vert \shf_{t}]
$
and in particular $u(0,   W_{0}(\cdot))=\mathbb{E}[h]$.
\end{rem}

We state now the main proposition of this section.
\begin{prop}	\label{pr VBN}
Let $H:C([-T,0])\longrightarrow \R$ be defined by \eqref{eq 8.75} and $u:[0,T]\times C([-T,0])\longrightarrow \mathbb{R}$ be defined by \eqref{eq-def 8.77b}.
\begin{enumerate}
\item
Function $u$ belongs to $C^{1,2}\left( [0,T[\times C([-T,0])\right)$ and it is a classical solution of \eqref{kolmogorov}.
\item If $f$ is continuous then we have moreover $u \in C^{0}\left( [0,T]\times C([-T,0])\right)$.
\end{enumerate}
\end{prop}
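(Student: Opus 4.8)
The strategy is to reduce the infinite-dimensional PDE to the finite-dimensional parabolic equation \eqref{EDP per Psi} already satisfied by $\Psi$, by computing the Fréchet derivatives of $u$ in terms of the partial derivatives of $\Psi$ and of the linear maps $\eta \mapsto \int_{[-t,0]}\varphi_i(s+t)d^-\eta(s)$. First I would set, for $(t,\eta)\in[0,T[\times C([-T,0])$, $y_i(t,\eta):=\int_{[-t,0]}\varphi_i(s+t)d^-\eta(s)$, using the representation \eqref{eqBVBB} which exhibits $y_i$ as a genuinely continuous (in fact affine) functional of $\eta$ and a $C^1$ function of $t$ (here the $C^2$ regularity of $\varphi_i$ is used, since $\dot\varphi_i$ must be differentiable under the $t$-integral). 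Then $u(t,\eta)=\Psi(t,y_1(t,\eta),\dots,y_n(t,\eta))$, and since $\Psi\in C^{1,2}([0,T[\times\R^n)$ by Remark \ref{RPsi}, the chain rule in Banach spaces gives that $u\in C^{1,2}([0,T[\times C([-T,0]))$ with
\begin{align*}
D u(t,\eta) &= \sum_{i=1}^n \partial_i\Psi(t,y(t,\eta))\, D_\eta y_i(t,\eta),\\
D^2 u(t,\eta) &= \sum_{i,j=1}^n \partial^2_{ij}\Psi(t,y(t,\eta))\, D_\eta y_i(t,\eta)\otimes D_\eta y_j(t,\eta),\\
\partial_t u(t,\eta) &= \partial_t\Psi(t,y(t,\eta)) + \sum_{i=1}^n \partial_i\Psi(t,y(t,\eta))\,\partial_t y_i(t,\eta),
\end{align*}
where $D_\eta y_i(t,\eta)\in\mathcal{M}([-T,0])$ is, from \eqref{eqBVBB}, the measure $\varphi_i(t)\delta_0 - \dot\varphi_i(\cdot+t)\1_{[-t,0]}(\cdot)\,d(\cdot)$ (a Dirac mass at $0$ plus an absolutely continuous part), which in particular does not depend on $\eta$.

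Next I would identify the three terms of \eqref{kolmogorov} one by one. The Dirac-at-zero component of $D_\eta y_i$ is $\varphi_i(t)$, so $D^{\delta_0}u(t,\eta)=\sum_i\partial_i\Psi\,\varphi_i(t)$ and hence $\langle D^2 u(t,\eta),\1_{\{0\}}\otimes\1_{\{0\}}\rangle = \sum_{i,j}\partial^2_{ij}\Psi(t,y(t,\eta))\varphi_i(t)\varphi_j(t)$. The singular part $D^\perp_{dx}u(t,\eta)$ is $\sum_i\partial_i\Psi\cdot(-\dot\varphi_i(x+t))\1_{]-t,0]}(x)\,dx$, so that, by the definition of the deterministic forward integral and Proposition \ref{prop A} applied to the (absolutely continuous, bounded variation) density $x\mapsto -\dot\varphi_i(x+t)$, one computes $\int_{]-t,0]}D^\perp_{dx}u(t,\eta)\,d^-\eta(x)=-\sum_i\partial_i\Psi\int_{]-t,0]}\dot\varphi_i(x+t)\,d^-\eta(x)$; matching this against the $t$-derivative of \eqref{eqBVBB}, namely $\partial_t y_i(t,\eta)=\dot\varphi_i(t)\eta(0)-\int_0^t\eta(s-t)\ddot\varphi_i(s)\,ds+\int_0^t\dot\varphi_i(0)\cdots$ (an integration-by-parts identity that I would verify carefully), one sees that $\partial_t y_i(t,\eta)+\int_{]-t,0]}\dot\varphi_i(x+t)\,d^-\eta(x)=0$. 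Consequently $\mathcal{L}u(t,\eta) = \partial_t\Psi(t,y(t,\eta)) + \tfrac{\sigma^2}{2}\sum_{i,j}\partial^2_{ij}\Psi(t,y(t,\eta))\varphi_i(t)\varphi_j(t)$, which vanishes precisely by \eqref{EDP per Psi}. This proves item 1, modulo the boundary continuity of $u$ at $t=T$: if $f$ is continuous, then $\Psi$ extends continuously to $t=T$ (dominated convergence in \eqref{eq-def 8.77c}, using the linear growth of $f$ and $\det\Sigma_t\to\det\Sigma_T$, possibly degenerate but the Gaussian measure converges weakly to $\delta$), while $y_i(t,\eta)\to y_i(T,\eta)$ uniformly on compacts of $C([-T,0])$ from \eqref{eqBVBB}; composing gives $u\in C^0([0,T]\times C([-T,0]))$ and $u(T,\eta)=f(y_1(T,\eta),\dots)=H(\eta)$, which is item 2.

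The main obstacle I anticipate is the bookkeeping around the singular/Dirac decomposition of $Du$ and the verification that the forward-integral term exactly cancels $\partial_t y_i$: this is where one must be scrupulous about the left-continuity in Proposition \ref{prop A}(1), about whether endpoint boundary terms ($\varphi_i$ or $\dot\varphi_i$ evaluated at $-t$, i.e. at $0$ after the shift) appear, and about the fact that $\1_{]-t,0]}$ versus $\1_{[-t,0]}$ in the measure $D_\eta y_i$ does not affect the integral against Lebesgue measure but does matter for isolating $D^{\delta_0}$. A secondary subtlety is that at $t=T$ the limiting Gaussian is degenerate ($\Sigma_T=0$), so the representation \eqref{eq-def 8.77c} via the density $p$ breaks down exactly at the endpoint; one handles this by working instead with the expectation form \eqref{definizione Psi}, where the stochastic integrals $\int_t^T\varphi_i\,dW$ tend to $0$ in $L^2$ as $t\uparrow T$, and invoking continuity of $f$. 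Verifying that the chain-rule differentiation is legitimate — i.e. that $\eta\mapsto y_i(t,\eta)$ is Fréchet $C^\infty$ (it is affine and continuous, hence trivially so) and that all compositions inherit the required joint continuity in $(t,\eta)$ — is routine once the formula \eqref{eqBVBB} is in hand.
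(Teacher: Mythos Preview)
Your approach is essentially identical to the paper's: both compute $\partial_t u$, $Du$, $D^2u$ via the chain rule applied to $u(t,\eta)=\Psi(t,y_1(t,\eta),\dots,y_n(t,\eta))$ with the affine maps $y_i$ from \eqref{eqBVBB}, decompose $Du$ into its $\delta_0$ and absolutely continuous parts, and reduce $\mathcal{L}u=0$ to the finite-dimensional equation \eqref{EDP per Psi} for $\Psi$, with item 2 following from continuity of $f$. One small slip: the cancellation identity you need is $\partial_t y_i(t,\eta)=\int_{]-t,0]}\dot\varphi_i(x+t)\,d^-\eta(x)$ (a minus, not a plus, between the two terms once they land in $\mathcal{L}u$), exactly as the paper verifies in \eqref{riferimento 1}; you already flagged this step for careful checking, and the remaining bookkeeping you anticipate (endpoint terms, $]-t,0]$ versus $[-t,0]$, degeneracy of $\Sigma_T$) is handled in the paper just as you outline.
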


\begin{proof}
%Since $\sigma \neq 0$, we can set $\sigma = 1$ without restriction of generality.
We will see that $D^\perp u(t,\eta)$ is absolutely continuous with density
that we will denote $x \mapsto D^{ac}_{x}$, so \eqref{kolmogorov} simplifies in
\begin{equation}		\label{eq SYSTOK}			
\begin{dcases}
\mathcal{L}u\,(t,\eta)=\partial_{t}u(t,\eta)+
\int_{]-t,0]}D^{ac}_{x}u(t,\eta)\,d^- \eta(x)+\frac{1}{2} D^{2}u\,(t,\eta)(\{0,0\})=0, & \\
u(T,\eta)=H(\eta). & \\
\end{dcases}
\end{equation}
We first evaluate the derivative $\partial_{t}u\,(t,\eta)$, for a given $(t,\eta) \in [0,T] \times C([-T,0])$:
{\footnotesize{
\be  \label{derivata prima tempo}
\begin{split}
\partial_{t}u(t,\eta) &=
\partial_{t} \Psi \left(t, \int_{[-t,0]} \varphi_1 (s+t)d^- \eta(s),
\ldots, \int_{[-t,0]}\varphi_{n}(s+t)d^- \eta(s)\right)
   \\
& +\sum_{i=1}^{n} \left(\partial_{i}\Psi \left( t, \int_{[-t,0]}\varphi_{1}(s+t)d^- \eta(s),\ldots, \int_{[-t,0]}\varphi_{n}(s+t)d^- \eta(s)\right) \cdot \right.\\
& \left.\hspace{5cm} \cdot \left(\partial_{t}   \int_{[-t,0]}\varphi_{i}(s+t)d^-\eta(s)\right)\right)\\
& = \partial_{t} \Psi \left(t, \int_{[-t,0]} \varphi_1 (s+t)d^- \eta(s),
\ldots, \int_{[-t,0]}\varphi_{n}(s+t)d^- \eta(s)\right) \\
%%%CRI OLD  =\partial_{t} \Psi \left(t, \int_{[-t,0]}
%OLD \varphi (s+t)d^-\eta(s) \right)  \\
&+\sum_{i=1}^{n} \left(\partial_{i}\Psi \left( t,
\int_{[-t,0]}\varphi_{1}(s+t)d^-\eta(s),\ldots,
\int_{[-t,0]}\varphi_{n}(s+t)d^-\eta(s)\right)\cdot I_i \right),
%OLD \left(\int_{]-t,0]}\dot{\varphi}_{i}(s+t)d^-\eta(s) \right). \\
\end{split}
\ee
}}
where
$$ I_i := \left(\int_{]-t,0]}\dot{\varphi}_{i}(s+t)d^-\eta(s) \right).$$
Indeed, by usual theorems of  Lebesgue integration theory and by Proposition \ref{prop A}, \eqref{eq D22} and \eqref{eq BACDI2},
for every $ 1 \le i \le n$, we obtain
\begin{equation}		\label{riferimento 1}
\begin{split}
\partial_{t}  \left(  \int_{[-t,0]}\varphi_{i}(s+t)d^- \eta(s)\right) & = \partial_{t} \left( \eta(0)\varphi_{i}(t)-\int_{-t}^0 \eta(s)\dot{\varphi}_{i}(s+t)ds \right)\\
&=\eta(0)\dot{\varphi}_{i}(t)-\eta(-t)\dot{\varphi}_{i}(0^+)-  \int_{-t}^0 \eta(s)\ddot{\varphi}_{i}(s+t)ds=I_i. %\\
%&=\eta(0)\dot{\varphi}_{i}(t)-\eta(-t)\dot{\varphi}_{i}(0^+)-  \int_{]-t,0]}\eta(s)\ddot{\varphi}_{i}(s+t)ds=\\
%& = \eta(0)\dot{\varphi}_{i}(t)-\eta(-t)\dot{\varphi}_{i}(0^+)-\left[  \eta(0)\dot{\varphi}_{i}(t)-\eta(-t)\dot{\varphi}_{i}(0) -\int_{[-t,0]}\dot{\eta}(s)\dot{\varphi}_{i}(s+t)ds \right]=\\
%&=\int_{]-t,0]}\dot{\varphi}_{i}(s+t)d^- \eta(s)\; .\\
%&  = I_i.
\end{split}
\end{equation}

In order to evaluate the derivatives of $u$ with respect to $\eta$,  we observe that by \eqref{eq-def 8.77b} and \eqref{eqBVBB}
we get
\be
u(t,\eta)=\Psi \left( t,\eta(0)\varphi_{1}(t) -\int_{0}^{t}\eta(s-t) \dot{\varphi}_{1}(s)ds ,\ldots,\eta(0)\varphi_{n}(t) -\int_{0}^{t}\eta(s-t) \dot{\varphi}_{n}(s)ds \right).
\ee
For every $t\in [0,T]$, $\eta\in C([-T,0])$, the first derivative $Du$ evaluated at $(t,\eta)$ is the measure on $[-T,0]$ defined by
\begin{equation}  \label{prima deriv rispetto allo spazio}
\begin{split}
D_{dx} u(t,\eta)  & = D^{ac}_{x}u(t,\eta)\,dx + D^{\delta_{0}}u(t,\eta)\delta_{0}(dx) \quad \mbox{with}\\
D^{ac}_{x}u(t,\eta)
&
=- \sum_{i=1}^{n}
\left( \partial_{i} \Psi   \left( t, \int_{[-t,0]}\varphi_{1}(s+t)d^- \eta(s),\ldots, \int_{[-t,0]}\varphi_{n}(s+t)d^- \eta(s)\right)
 \right)\cdot \\
 & \hspace{7cm} \cdot \Big(   \mathds{1}_{[-t,0]}(x) \dot{\varphi}_{i}(x+t) \Big)\ , \\
 D^{\delta_{0}}u(t,\eta)
 &
 = \sum_{i=1}^{n}
\left( \partial_{i} \Psi   \left( t, \int_{[-t,0]}\varphi_{1}(s+t)d^- \eta(s),\ldots, \int_{[-t,0]}\varphi_{n}(s+t)d^- \eta(s)\right)
 \right)\cdot   \varphi_{i}(t)			\; .
 \end{split}
\end{equation}
As anticipated, we observe that $x\mapsto D^{ac}_{x}u\,(t,\eta)$ has bounded variation.
%We recall that, for avery $i$, $d\varphi_{i}(x+t)=\dot{\varphi}_{i}(x+t)dx$ and $\dot{\varphi}_{i}$ is of bounded variation because it is supposed in $C^{1}$.
%Measure $\mathds{1}_{[-t,0]}(\cdot) d\varphi_{i}(\cdot+t)$ on $[-T,0]$ is a measure such that on a continuous function $h\in C([-T,0])$ we have
%$\langle h, \mathds{1}_{[-t,0]}(\cdot) d\varphi_{i}(\cdot+t)\rangle= \int_{-T}^{0}h(x)\mathds{1}_{[-t,0]}(x) d\varphi_{i}(x+t)
%=\int_{-t}^{0}h(x)d\varphi_{i}(x+t)=\int_{-t}^{0}h(x)\dot{\varphi}_{i}(x+t)dx$. Moreover the measure $d\varphi_{i}$ is singular with respect to the Dirac measure $\delta_{0}$.\\
%

Deriving again in a similar way, for every $t\in [0,T]$, $\eta\in C([-T,0])$, the second order derivative $D^{2}u$ evaluated at $(t,\eta)$ gives
\begin{equation}   \label{derivata seconda spazio}
\begin{split}
D^{2}_{dx,dy}u(t,\eta)=\sum_{i,j=1}^{n}& \left( \partial_{i,j}^{2}\Psi \left( t, \int_{[-t,0]}\varphi_{1}(s+t)d^- \eta(s),\ldots, \int_{[-t,0]}\varphi_{n}(s+t)d^- \eta(s)\right) \right)\cdot\\
&\hspace{-1cm} \cdot \Bigg( \varphi_{i}(t)\varphi_{j}(t)\delta_{0}(dx)\delta_{0}(dy)-\varphi_{i}(t) \mathds{1}_{[-t,0]}(x) \ddot{\varphi}_{j}(x+t)\delta_{0}(dy)\\
&\hspace{-1cm}  -\varphi_{j}(t) \mathds{1}_{[-t,0]}(y)\ddot{\varphi}_{i}(y+t)\delta_{0}(dx) +  \mathds{1}_{[-t,0]}(x)\mathds{1}_{[-t,0]}(y)\ddot{\varphi}_{i}(x+t)
\ddot{\varphi}_{j}(y+t)\Bigg)  \; .
\end{split}
\end{equation}
%We recall that, for all $i=1,\ldots, n$, $\varphi_{i}$ are $C^{2}$ functions, then in particular $\dot{\varphi}_{i}$ are in $L^{2}$.
%Let $ \mathcal{D}_0$ denote the Hilbert space of Dirac measures concentrated at zero and let $H$ be the Hilbert space defined as $H:=\mathcal{D}_0 \oplus L^{2}([-T,0])$.
%We remark that in this case $D^{2}u\,(t,\eta)$ belongs to the Hilbert space $H\hat{\otimes}_{h}^{2}$ where $\hat{\otimes}_{h}^{2}$ denotes the tensor product of the Hilbert space $H$
%equipped with the Hilbertian tensor topology, i.e. $D^{2}u: [0,T]\times C([-T,0])\rightarrow H\hat{\otimes}_{h}^{2}$ continuously. \\
%By Proposition \ref{prop A} we have
We get
{\scriptsize{
\be
 \label{DOrth}
\int_{]-t,0]}D^{ac}_{x}u(t,\eta)\,d^-\eta(x) =
  \sum_{i=1}^{n} \left( \partial_{i}\Psi \left( t, \int_{[-t,0]}\varphi_{1}(s+t)d^-\eta(s),\ldots, \int_{[-t,0]}\varphi_{n}(s+t)d^-\eta(s)\right) \right) \cdot I_{i}.
\ee
}}
% \int_{]-t,0]} \dot{\varphi}_{i}(x+t)d^- \eta(x) =
% \eta(0)\dot{\varphi}_{i}(t)-\eta(-t)\dot{\varphi}_{i}(0)-  \int_{[-t,0]}\eta(s)\ddot{\varphi}_{i}(s+t)ds\\ &=&
% \eta(0)\dot{\varphi}_{i}(t)-\eta(-t)\dot{\varphi}_{i}(0)-  \int_{]-t,0]}\eta(s)\ddot{\varphi}_{i}(s+t)ds.
% \end{eqnarray}
Using \eqref{EDP per Psi}, \eqref{derivata prima tempo},
\eqref{DOrth} and \eqref{derivata seconda spazio} we obtain that
$
\mathcal{L}u\,(t,\eta) = 0.
% =\sum_{i=1}^{n} \left( \partial_{i}\Psi \left( t, \int_{[-t,0]}\varphi_{1}(s+t)d^-\eta(s),\ldots, \int_{[-t,0]}\varphi_{n}(s+t)d^-\eta(s)\right) \right) \cdot I_{i},
$
% where
% \begin{equation}	\label{eq Ii}
% I_{i}=
% \left(
% \int_{]-t,0]}\dot{\varphi}_{i}(x+t)d^- \eta(x) -
% \int_{]-t,0]} \mathds{1}_{]-t,0]}(x) \dot{\varphi}_{i}(x+t) d^- \eta(x)
% \right)= 0			\; .
% \end{equation}
% We conclude that $\mathcal{L}u\,(t,\eta)=0$.\\
Condition $u(T,\eta)=H(\eta)$ is trivially verified by definition. This concludes the proof of point 1.

Point 2. is immediate.
\end{proof}

\begin{rem}
In this section we have often used the concept of deterministic
forward integral on a closed interval $[-t,0]$,
 given in Definition \ref{def FRW and BAWKW det int}
\begin{equation}	\label{eq ERF4}
\int_{[-t,0]}\varphi_{i}(s+t)d^-\eta(s),		
\quad \mbox{instead of} \quad 
\int_{]-t,0]}\varphi_{i}(s+t)d^-\eta(s).		
\end{equation}
Since $W_{0}=0$,
% It is applied to $\eta=W_{t}(\cdot)$. Since $W_{0}=0$ we have
the two integrals are the same when we replace  $\eta=W_{t}(\cdot)$ so
$$ \int_{[-t,0]}\varphi_{i}(s+t)d^-\eta(s) \vert_{\eta=W_{t}(\cdot)}=
 \int_{]-t,0]}\varphi_{i}(s+t)d^- \eta(s) \vert_{ \eta=W_{t}(\cdot)}.$$
The choice of the left expression in \eqref{eq ERF4}, which is compatible with the fact of considering \\
$\int_{]-t,0]}D^{ac}_{x}u(t,\eta)\,d^-\eta(x)$ in \eqref{kolmogorov},
 is justified since
\[
t\mapsto \int_{]-t,0]}\varphi_{i}(s+t)d^-\eta(s)
\]
is not differentiable.
 \end{rem}
 
\section{The existence result for smooth Fr\'echet terminal condition}  \label{sec 5TM}

\subsection {Preliminary considerations}

In this section, we will prove an existence theorem for classical solutions of \eqref{kolmogorov} under smooth Fr\'echet terminal condition.
In order to define explicitly the solution of the PDE, we need to introduce
two central objects for this section: the Brownian stochastic flow which is a real valued stochastic flow denoted by $\left( X_t^{s,x}\right)_{0\leq s\leq t\leq T, x\in \R}$
 and the functional Brownian stochastic flow
which is a $C([-T,0])$-valued stochastic flow denoted by $\left( Y_{t}^{s,\eta}\right)_{0\leq s\leq t\leq T, \eta \in C([-T,0])}$.
%Actually the Markovian stochastic flow in this case
 %even not so useful, will be fundamental in the general case $\sigma$ not constant. For this reason often we keep this notation, to help the reader
%to look forward.
\begin{dfn}			\label{def flow}
Let $\Delta:=\{ (s,t)| \ 0\leq s\leq t\leq T\}$ and $\eta\in C([-T,0])$. We define the flows that will appear in this section.
\begin{enumerate}
\item We denote by $\left( X_t^{s,x}\right)_{0\leq s\leq t\leq T, x\in \R}$
the real-valued random field defined
%over $\Delta\times  \R \longrightarrow \R $
 by
\begin{equation}			\label{defXF}
(s,t,x)\mapsto X_t^{s,x}=x+\sigma(W_t-W_s) \ .
\end{equation}
This will be called \textbf{Brownian stochastic flow}.
\item % A \textbf{functional Brownian stochastic flow} with values in $C([-T,0])$ and denoted by $\left( Y_{t}^{s,\eta}\right)_{0\leq s\leq t\leq T, \eta \in C([-T,0])}$ which is a
We denote by $\left( Y_{t}^{s,\eta}\right)_{0\leq s\leq t\leq T, \eta \in C([-T,0])}$
the $C([-T,0])$-valued random field defined
%over  $\Delta\times  C([-T,0]) \longrightarrow C([-T,0])$
 by
\begin{equation}			\label{eq Br flow}
(s,t,\eta)\mapsto Y_{t}^{s,\eta}(x)=
\left\{
\begin{array}{ll}
\eta(x+t-s)  			& x\in [-T,s-t[\\
\eta(0)+\sigma \apt W_{t}(x)-W_{s}\cpt	& x\in [s-t,0].
\end{array}
\right.
\end{equation}
This will be called \textbf{functional Brownian stochastic flow}.
\end{enumerate}
\end{dfn}
% \begin{rem}
% Suppose $(X^{s,x}_t)$ is the Markovian flow which is a continuous
% version of the solutions to an SDE
% of the type
% $$X_t = x + \int_0^t \sigma(s,X_s)dW_s + \int_0^t b(s,X_s)ds, t \in [0,T],$$
% and $\sigma, b:[0,T] \times \R \rightarrow \R$ being Lipschitz functions
% with linear growth.
% Then the corresponding functional Markov flow is given
% We could have formulatesd in this more general framework the
% present paper but for simplicity we have restricted us to the pure
% Brownian case.
% \end{rem}

 Let $H:C([-T,0])\longrightarrow \R$ be the functional appearing in \eqref{kolmogorov} and a path-dependent random variable  $h:=H(\sigma W_{T}(\cdot))$.
% In order to find the solution of the Kolmogorov PDE \eqref{kolmogorov}
 We define the functional  $u:[0,T]\times C([-T,0])\longrightarrow \R$ by
\begin{equation}		\label{eq u}
u(t,\eta)=\mathbb{E}\left[ H\big( Y_{T}^{t,\eta} \big)\right].
\end{equation}
%
%The functional Brownian stochastic flow in the definition of functional $u$ allows to verify
Since $\sigma W_{T}(\cdot)=Y_{T}^{t,\sigma W_{t}(\cdot)}$ we have
\begin{equation} \label{Definitionu}
%u(t,\sigma W_{t}(\cdot))=
\mathbb{E}\left(h \vert \shf_{t}  \right) =
\mathbb{E}\left[H\big (\sigma W_{T}(\cdot) \big)
\vert \shf_{t}  \right] =
%\mathbb{E}\left[H\big (Y_{T}^{t,Y_{t}^{0,0}} \big) \vert \shf_{t}  \right]=
\mathbb{E}\left[ H\big( Y_{T}^{t,\sigma W_{t}(\cdot)}\big) \vert \shf_{t}\right]=
u(t,\sigma W_{t}(\cdot)).
%\mathbb{E}\left[H(\sigma W_{T}(\cdot)) \vert \shf_{t} \right].
\end{equation}
%Indeed $Y^{0,
%for $\sigma=1$,
%implies $\sigma W_{T}(\cdot)=Y_{T}^{t,\sigma W_{t}(\cdot)}$, so
%\[
%\mathbb{E}\left[H\big (Y_{T}^{t,Y_{t}^{0,0}} \big) \vert \shf_{t}  \right]=\mathbb{E}\left[ H\big( Y_{T}^{t,W_{t}(\cdot)} \big) \vert \shf_{t}\right]=u(t,W_{t}(\cdot)) \ .
%\]
For this reason, $u$ defined in \eqref{eq u} is a natural candidate to be a solution of \eqref{kolmogorov}.
%taking into account the flow property stated in  Lemma \ref{lemma 1}.
In Theorem \ref{thm 2 derivate u} we will show, under smooth regularity of $H$, that such an $u$ is sufficiently smooth
to be a classical solution of the
%In particular, given a real continuous process $X$, we will evaluate again an It\^o's type expansion of $u(t, X_{t}(\cdot))$.
%In this framework  we do not want to discuss about uniqueness but it could be just an application of uniqueness theorem in order to validate the fact that $u:[0,T]\times C([-T,0])\longrightarrow \R$ defined by
%$u(t,\eta)=\mathbb{E}\left[ H\left( Y^{t,\eta}_T\right)\right]$ is a \textbf{virtual} solution of the
% Kolmogorov equation associated to
%the window Brownian motion
path-dependent heat equation \eqref{kolmogorov}.

We dedicate next two subsections to investigate some properties of $Y^{t,\eta}_T$ that we will use in the proof of the main theorem. The  Section
\ref{sec Flow} below contains the
general results for the flows introduced in Definition \ref{def flow}.
In Section \ref{sec BrowStoF} we will introduce the Markovian stochastic flow for a general $\sigma:[0,T]\times \R\longrightarrow \R$, which coincides with the Brownian stochastic flow when $\sigma$ is constant.
We will derive some properties for this flow that we need in the theorem. We recall that, given $X$ and $Y$ two random elements taking values in the same space, we write $X\sim Y$ if they have the same law.
From now on a realization $\omega \in \Omega$ will be often fixed.

\subsection{Some properties of the Brownian (resp. functional
Brownian) flow} \label{sec Flow}
First of all we observe that the functional Brownian stochastic flow is time-homogeneous in law.
\begin{prop}			\label{RT55}
$Y^{s,\eta}_{t}$ and $Y^{0,\eta}_{t-s}$ have the same law as $C([-T,0])$-valued random variables. In particular, for every $x\in [-T,0]$, $Y^{0,\eta}_{t-s}(x) \sim Y^{s,\eta}_{t}(x)$.
\end{prop}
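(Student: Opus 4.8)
The statement is that $Y^{s,\eta}_t \stackrel{d}{=} Y^{0,\eta}_{t-s}$ as $C([-T,0])$-valued random variables. Looking at the explicit formula \eqref{eq Br flow}, the flow $Y^{s,\eta}_t$ depends on the Brownian path only through the increments $\{W_t(x) - W_s : x \in [s-t,0]\}$, i.e. through the process $r \mapsto W_{s+r} - W_s$ for $r \in [0, t-s]$. Similarly $Y^{0,\eta}_{t-s}$ depends on the Brownian path only through $\{W_{t-s}(x) - W_0 : x \in [s-t-(t-s)\text{?}]\}$ — more precisely through $r \mapsto W_r - W_0 = W_r$ for $r \in [0,t-s]$. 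So the first step is to make this dependence explicit by writing $Y^{s,\eta}_t$ as a deterministic (measurable) functional $\Phi(s, t, \eta; \,(\theta_r)_{r \in [0,t-s]})$ of the path-segment $\theta_r := W_{s+r} - W_s$, and checking that the \emph{same} functional $\Phi$ applied to $(W_r)_{r \in [0,t-s]}$ yields $Y^{0,\eta}_{t-s}$.

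To see that $\Phi$ is genuinely the same functional in both cases, I would substitute the change of variable $x \mapsto x$ but re-index: for $Y^{s,\eta}_t$, on $x \in [s-t, 0]$ we have $W_t(x) - W_s = W_{t+x} - W_s = \theta_{t+x-s}$ with $t+x-s \in [0, t-s]$, while on $x \in [-T, s-t[$ the value is the deterministic $\eta(x+t-s)$; for $Y^{0,\eta}_{t-s}$, on $x \in [-(t-s),0]$ we get $W_{t-s}(x) = W_{t-s+x} = \tilde\theta_{t-s+x}$ with $\tilde\theta_r := W_r$, and on $x \in [-T, -(t-s)[$ the value is $\eta(x + t - s)$. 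Since $s - t = -(t-s)$, the two pieces are described by the identical formula with the identical index ranges, so indeed $Y^{s,\eta}_t = \Phi(\eta; \theta)$ and $Y^{0,\eta}_{t-s} = \Phi(\eta; \tilde\theta)$ for one and the same measurable map $\Phi : C([-T,0]) \times C([0,t-s]) \to C([-T,0])$.

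Then the conclusion follows from the stationarity of Brownian increments: the process $(\theta_r)_{r \in [0,t-s]} = (W_{s+r} - W_s)_{r \in [0,t-s]}$ has the same law on $C([0,t-s])$ as $(\tilde\theta_r)_{r\in[0,t-s]} = (W_r)_{r \in [0,t-s]}$ (both are standard Brownian motions started at $0$). Applying the measurable map $\Phi(\eta; \cdot)$ and invoking the fact that equality in law is preserved under measurable maps gives $Y^{s,\eta}_t \stackrel{d}{=} Y^{0,\eta}_{t-s}$ in $C([-T,0])$. The statement about one-dimensional marginals $Y^{0,\eta}_{t-s}(x) \sim Y^{s,\eta}_t(x)$ is then immediate, since evaluation at a fixed $x$ is a continuous (hence measurable) map $C([-T,0]) \to \R$.

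\textbf{Main obstacle.} There is no deep obstacle here; the result is essentially bookkeeping. The one point requiring a little care is the index matching in the piecewise definition — checking that the breakpoint $s-t$ for $Y^{s,\eta}_t$ corresponds exactly to the breakpoint $-(t-s)$ for $Y^{0,\eta}_{t-s}$ and that the arguments $x + t - s$ of $\eta$ coincide on the deterministic pieces — so that one really gets the \emph{same} functional $\Phi$ rather than two superficially different ones. Beyond that, one should note that $\Phi$ is continuous (not merely measurable) in the path argument, which makes the ``pushforward of equal laws'' step completely routine and avoids any measurability subtlety.
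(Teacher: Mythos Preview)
Your proposal is correct and follows essentially the same route as the paper: the paper's proof simply observes that on $[-T,s-t]$ both flows equal the deterministic $\eta(x+t-s)$, while on $[s-t,0]$ the processes $x\mapsto \eta(0)+\sigma(W_t(x)-W_s)$ and $x\mapsto \eta(0)+\sigma(W_{t-s}(x)-W_0)$ have the same law by stationarity of Brownian increments. Your write-up is a more careful version of exactly this argument, making explicit the common functional $\Phi$ and the pushforward step that the paper leaves implicit.
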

\begin{proof}
It follows from the two following arguments.
For $x\in [-T,s-t]$, $Y^{s,\eta}_{t}(x)$ and $Y^{0,\eta}_{t-s}(x)$
deterministic and are equal to $\eta(x+t-s)$.
% and so trivially they have the same law.
 For $x\in [s-t, 0]$,
%since $\sigma$ is time independent,
the real-valued processes
 $Y^{s,\eta}_{t}(x)=\eta(0)+\sigma \apt W_{t}(x)-W_{s}\cpt $ and $Y^{0,\eta}_{t-s}(x)=\eta(0)+\sigma \apt W_{t-s}(x)-W_{0}\cpt$, have the same law by well-known properties of Brownian motion.
\end{proof}

The next proposition concerns the continuity of the
field $Y^{s,\eta}_t$
 with respect to its three variables.
\begin{prop} 					\label{Pdef flow}
$(Y_{t}^{s,\eta})_{0\leq s\leq t\leq T , \,\eta\in C([-T,0])}$ is a continuous random field.
\end{prop}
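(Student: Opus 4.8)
The plan is to write $Y_t^{s,\eta}$ by a single formula valid on all of $[-T,0]$, and then deduce joint continuity from the uniform continuity of $\eta$ and of the (fixed) Brownian trajectory.

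First I would rewrite \eqref{eq Br flow} in the unified form
\begin{equation}
Y_t^{s,\eta}(x)=\eta\bigl((x+t-s)\wedge 0\bigr)+\sigma\bigl(W_{(t+x)\vee s}-W_{s}\bigr),\qquad x\in[-T,0],
\end{equation}
and check that it agrees with the two-branch definition: for $x<s-t$ one has $(x+t-s)\wedge 0=x+t-s$ and $(t+x)\vee s=s$, so the right-hand side equals $\eta(x+t-s)$; for $x\in[s-t,0]$ one has $(x+t-s)\wedge 0=0$ and $(t+x)\vee s=t+x$, so it equals $\eta(0)+\sigma\apt W_{t}(x)-W_{s}\cpt$. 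In particular the two pieces glue continuously at $x=s-t$ (both equal $\eta(0)$), so $Y_t^{s,\eta}\in C([-T,0])$ is well defined, and the argument $(t+x)\vee s$ always lies in $[s,t]\subseteq[0,T]$.

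Next, fix $\omega$ in the full-measure event on which $u\mapsto W_{u}(\omega)$ is continuous on $[0,T]$; being continuous on a compact interval, it is uniformly continuous there, with modulus of continuity $\varrho_{W}$. Let $(s_{n},t_{n},\eta_{n})\to(s,t,\eta)$ in $\Delta\times C([-T,0])$, and denote by $\varrho_{\eta}$ the modulus of continuity of the fixed function $\eta$ on $[-T,0]$. Using $|\eta_{n}(\cdot)-\eta(\cdot)|\le\|\eta_{n}-\eta\|_{\infty}$ together with the elementary Lipschitz bounds $|(a\wedge 0)-(b\wedge 0)|\le|a-b|$ and $|(a\vee c)-(b\vee d)|\le\max(|a-b|,|c-d|)$, the triangle inequality yields, uniformly in $x\in[-T,0]$,
\begin{equation}
\begin{split}
\bigl|Y_{t_{n}}^{s_{n},\eta_{n}}(x)-Y_{t}^{s,\eta}(x)\bigr|
\le{}&\|\eta_{n}-\eta\|_{\infty}+\varrho_{\eta}\bigl(|(t_{n}-s_{n})-(t-s)|\bigr)\\
&+|\sigma|\,\varrho_{W}\bigl(\max(|t_{n}-t|,|s_{n}-s|)\bigr)+|\sigma|\,|W_{s_{n}}-W_{s}|.
\end{split}
\end{equation}
Taking the supremum over $x$, each term on the right-hand side tends to $0$ as $n\to\infty$, hence $\|Y_{t_{n}}^{s_{n},\eta_{n}}-Y_{t}^{s,\eta}\|_{\infty}\to 0$. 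Since $\Delta\times C([-T,0])$ is a metric space, this sequential continuity is continuity; as it holds for every $\omega$ in a full-measure set, and $\omega\mapsto Y_{t}^{s,\eta}(x)$ is measurable for each $(s,t,\eta,x)$, $(Y_{t}^{s,\eta})$ is a continuous random field.

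The computation is routine once the unified formula is in place; the only point requiring a bit of care is the uniform-in-$x$ control of the shifted arguments $(x+t-s)\wedge 0$ and $(t+x)\vee s$, which is exactly what the two Lipschitz inequalities above provide. I do not anticipate a genuine obstacle.
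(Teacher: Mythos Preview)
Your argument is correct. The unified formula $Y_t^{s,\eta}(x)=\eta\bigl((x+t-s)\wedge 0\bigr)+\sigma\bigl(W_{(t+x)\vee s}-W_{s}\bigr)$ is a genuine simplification, and the Lipschitz estimates for $\wedge$ and $\vee$ give the uniform-in-$x$ control in one stroke.

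The paper takes a different, more pedestrian route: it decomposes $Y_{t_n}^{s_n,\eta_n}-Y_{t}^{s,\eta}$ into three pieces $I_1,I_2,I_3$, varying one of the parameters $\eta,t,s$ at a time, and then for the representative piece $I_2$ it writes out the difference explicitly on the three subintervals $[-T,s-t]$, $[s-t,s-t_n]$, $[s-t_n,0]$ determined by the branch points of the piecewise definition, bounding each case separately by moduli of continuity. Your approach avoids this case analysis entirely: once the flow is expressed through $(x+t-s)\wedge 0$ and $(t+x)\vee s$, the dependence on $(s,t)$ is manifestly $1$-Lipschitz in those composite arguments, and a single triangle inequality suffices. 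The paper's version is perhaps more transparent for a reader tracking the two-branch definition directly, but yours is shorter, yields an explicit quantitative bound, and treats all three parameters simultaneously rather than sequentially.
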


\begin{proof} \
As usual in this section $\omega \in \Omega$ is fixed
and  $\varpi_{\eta}$ (resp. $\varpi_{W(\omega)}$) is  respectively the
modulus of continuity of $\eta$ (resp. the Brownian path $W(\omega)$). \\
Let  $(s,t, \eta)$ be such that $ 0 \le  s \le t \le T, \eta  \in C([-T,0])$
and a sequence $(s_n,t_n, \eta_n)$ also
such  that  $ 0 \le  s_n \le t_n \le T, \eta_n  \in C([-T,0])$
with
$$ \lim_{n \rightarrow \infty} \left(\vert s - s_n\vert +
\vert t - t_n\vert + \Vert \eta  - \eta_n \Vert_\infty \right) = 0.$$
We have to show that $Y^{s_n,\eta_n}_{t_n} \longrightarrow
Y^{s,\eta}_{t}$ in $C([0,T]$, when $n \rightarrow \infty$ i.e. uniformly.
For $x \in [0,T]$, we evaluate
$$
 \vert Y^{s_n,\eta_n}_{t_n} - Y^{s,\eta}_{t} \vert(x) \le
(I_1(n) + I_2(n) + I_3(n))(x),$$
 where
\begin{eqnarray*}
I_1(n)(x) &=&  \vert Y^{s_n,\eta_n}_{t_n} - Y^{s_n,\eta}_{t_n} \vert (x)\\
I_2(n)(x)  &=&  \vert Y^{s,\eta}_{t_n} - Y^{s,\eta}_{t} \vert (x)\\
I_3(n) (x)&=&  \vert Y^{s_n,\eta}_{t_n} - Y^{s,\eta}_{t_n} \vert(x).
%I_2(n)  &=&  \vert Y^{s,\eta}_{t_n} - Y^{s,\eta}_{t} \vert
\end{eqnarray*}
%Without restriction to generality, we may suppose that $t_n < t_0$.
By Definition \ref{def flow}, it is easy to see that
$
\Vert I_1(n) \Vert_\infty \le  \Vert \eta - \eta_n \Vert_\infty
+ \vert \eta_n(0) - \eta(0)\vert 
\le 2 \Vert \eta - \eta_n \Vert_\infty.
$
Since $I_3(n)$ behaves similarly to  $I_2(n)$,
we only show  that
$ \label{eq conv unif flow}
\lim_{n \rightarrow\infty} I_2(n) = 0.
$
Without restriction of generality, we will suppose that
% $\sigma=1$ and that
 $t_n \le t$
for any $n$, since the  case when the sequence $(t_n)$ is  greater
or equal than $t$, could be treated  analogously.
%%%%%%%%%% OLD VERSION
%\begin{itemize}
%\item We  start establishing the continuity in $t$. \\
%Let $0 \le  s \le t_0 \le T, \eta \in C([-T,0])$.
%So  $(t_{n})$ is a sequence in $[s,T]$ such that
% $t_{n}\xrightarrow[n\longrightarrow +\infty]{} t$.
% We have to show that
%\begin{equation}		\label{eq conv unif flow}
%Y_{t_{n}}^{s,\eta}
%\xrightarrow [n\longrightarrow +\infty]{ C([-T,0])}
% Y_{t_{0}}^{s,\eta}.
%\end{equation}
%We consider $n$ fixed.
%Without restriction of generality we suppose $t_{n}<t_{0}$.
%(The case $t_{n}>t_{0}$ is analogous).
We observe that following equality holds:
\begin{equation}		\label{eq SDFV}
\begin{split}
(Y_{t_{n}}^{s,\eta}-Y_{t}^{s,\eta})(x)&=
\eta(x+t_{n}-s)\1_{[-T,s-t_{n}]}(x) -\eta(x+t-s)  \1_{[-T,s-t]}(x)\\
&\hspace{1cm} +\left(\eta(0)+ \sigma W_{t_{n}}(x)- \sigma W_{s}\right) \1_{[s-t_{n},0]}(x)
\\
&\hspace{1cm}
-\left(\eta(0)+ \sigma W_{t}(x)- \sigma W_{s}\right) \1_{[s-t,0]}(x)=\\
&=\left( \eta(x+t_{n}-s) -\eta(x+t-s) \right) \1_{[-T,s-t]}(x)\\
&\hspace{1cm} +\left(  \eta(x+t_{n}-s)- \eta(0)- \sigma W_{t}(x)+ \sigma
W_{s}\right)
\1_{[s-t,s-t_{n}]}(x) \\
&\hspace{1cm} +\left( \sigma W_{t_{n}}(x) - \sigma W_{t}(x)\right) \1_{[s-t_{n},0]}(x)  \; .
\end{split}
\end{equation}
Using \eqref{eq SDFV} to evaluate $\Vert I_2(n) \Vert_\infty$
%$\left\| Y_{t_{n}}^{s,\eta}-Y_{t_{0}}^{s,\eta}\right\|_{\infty}$
we obtain
\[
\begin{split}
\sup_{x\in [-T,0]}\vert Y_{t_{n}}^{s,\eta}(x)-Y_{t}^{s,\eta}(x)\vert &
\leq \sup_{x\in[-T,0]} \vert \eta(x+t_{n}-s)-\eta(x+t-s)  \vert \\
&\hspace{1cm} + \sup_{x\in[s-t,s-t_{n}]} \vert \eta(x+t_{n}-s)- \eta(0)\vert\\
&\hspace{1cm}
 +  \sup_{x\in[s-t,s-t_{n}]} \sigma \vert W_{t}(x)-W_{s} \vert \\
&\hspace{1cm} + \sup_{x\in[-T,0]} \sigma \vert W_{t_{n}}(x)-W_{t}(x) \vert  \leq \\
&
\leq 2\; \varpi_{\eta}(|t_{n}-t|)+ 2\; \sigma \varpi_{W(\omega)}(|t_{n}-t|)
\xrightarrow[n\longrightarrow +\infty]{} 0.
\end{split}
\]
%where $\varpi_{\eta}$ and $\varpi_{W(\omega)}$ are respectively the
%modulus of continuity of $\eta$ and the Brownian motion for
% every fixed $\omega$.
Since $\eta$ and $W(\omega)$ are uniformly
continuous on the compact set $[0,T]$ both moduli of continuity
converge to zero when $t_{n}\rightarrow t_{0}$.
%We omit the proof of continuity with respect other variables.
%\item We  discuss the continuity with respect to $s$.\\
%For  $0 < s_0 <t <T$
%and $\eta \in C([-T,0])$.
%Let $(s_{n})$ be a sequence in $[0,t]$ such that $s_{n}\xrightarrow[n\longrightarrow +\infty]{}s_{0}$.
%We have to show that
%\begin{equation}		\label{eq conv unif flow s}
%Y_{t}^{s_{n},\eta}
%\xrightarrow [n\longrightarrow \infty]{ C([-T,0])}
% Y_{t}^{s_{0},\eta}.
%\end{equation}
%This can be proved following the same lines of previous one with respect to $t$.
%%%%%%%%%
%\item The continuity with respect to $\eta$ is obvious
%from expression \eqref{eq flow}.
%\item To prove the continuity with respect to $(s,t, \eta)$. We consider
%$(s_{n},t_{n}, \eta_{n})\xrightarrow [n\longrightarrow +\infty]{} (s_{0},t_{0}, \eta_{0})$ where $0 < s_0,s_{n} <t_{0},t_{n} <T$. We have to prove that
%\begin{equation}		\label{eq conv unif flow 3}
%Y_{t_{n}}^{s_{n},\eta_{n}}
%\xrightarrow [n\longrightarrow +\infty]{ C([-T,0])}
% Y_{t_{0}}^{s_{0},\eta_{0}}.
%\end{equation}
%\end{itemize}
\end{proof}
%
%If $H$ is $C^{0}(L^{2}([-T,0]))$, or even only $C^{0}(C([-T,0]))$, we have, as trivially consequence of this lemma,
%\begin{equation}		\label{eq H contin}
%H\big( Y_{T-t_{n}}^{0,\eta}\big)
%\xrightarrow [n\rightarrow \infty]{a.s.}
%H\big( Y_{T-t_{0}}^{0,\eta}\big)
%\end{equation}
%%%%%%%%%%%%%%%%%%%%%%%%
%

At this point we make some simple observations that will be often used in the sequel.
\begin{rem}%{For simplicity we set $\sigma=1$.}
\item
\begin{enumerate}
%\item The stochastic flow is obviously $L^{2}([-T,0])$-continuous, being continuous with respect to the stronger topology $C([-T,0])$.
\item
There are universal constants $C_{1}$, $C_{2}$, $C_{3}$ and $C_{4}$ such that,
for every $t \in [0,T], \epsilon >0$ with $t + \epsilon \in [0,T] $, it holds
\begin{equation}			\label{eq MAGG1}
\begin{split}
\left\|
Y_{T}^{t,\eta}
\right\|_{\infty}
\leq
C_{1}\; \left( 1+\|\eta\|_{\infty}+ \sup_{t\in [0,T] }
\sigma \vert W_{t}\vert  \right)\; \\
\left\|
Y_{T}^{t+\epsilon,\eta}
\right\|_{\infty}
\leq
C_{2}\; \left( 1+\|\eta\|_{\infty}+ \sigma \sup_{t\in [0,T] }\vert W_{t}\vert  \right)
\end{split}
\end{equation}
and
\begin{equation}			\label{eq MAGG0}
\left\|
Y_{0}^{T-t,\eta}
\right\|_{\infty}
\leq
C_{3} \; \left( 1+\|\eta\|_{\infty}+ \sigma \sup_{t\in [0,T] }\vert W_{t}\vert  \right)		\; .
\end{equation}
 \eqref{eq MAGG1} implies that, for any $\alpha \in [0,1],
 t \in [0,T], \epsilon $ with $t + \epsilon \in [0,T] $,
 it holds
\begin{equation}			\label{eq MAGG}
\left\|
\alpha Y_{T}^{t+\epsilon,\eta}+(1-\alpha) Y_{T}^{t+\epsilon,Y_{t+\epsilon}^{t,\eta } }
\right\|_{\infty}
\leq
C_{4}\; \left(1+\|\eta\|_{\infty}+ \sigma \sup_{t\in [0,T] }\vert W_{t}\vert  \right).
\end{equation}
\item
 For any $\alpha \in [0,1], t \in [0,T]$ it holds
\begin{equation}		\label{eq CONVC}
 \alpha Y_{T}^{t+\epsilon,\eta}+(1-\alpha) Y_{T}^{t+\epsilon,Y_{t+\epsilon}^{t,\eta } }
 \xrightarrow[ \epsilon\longrightarrow 0 ]{ C([-T,0]) } Y_{T}^{t,\eta}		\; .
\end{equation}
In fact expanding the term $Y_{T}^{t+\epsilon, Y_{t+\epsilon}^{t,\eta}}$,
 which equals $Y_{T}^{t,\eta}$, we obtain
\[
\left\|
 \alpha Y_{T}^{t+\epsilon,\eta}+(1-\alpha) Y_{T}^{t+\epsilon,Y_{t+\epsilon}^{t,\eta } }-Y_{T}^{t,\eta}
 \right\|_{\infty}=
\alpha \left\| Y_{T}^{t+\epsilon,\eta}-Y_{T}^{t,\eta} \right\|_{\infty}.
 \]
The  right-hand side  converges to zero because
of  Proposition \ref{Pdef flow}.
\item In the sequel we will make an explicit use of
the expression below:
{\scriptsize{
\begin{equation}		\label{eq DIFFER}
\left( Y_{T}^{t+\epsilon, \eta}-Y_{T}^{t,\eta}\right)(x)
=
\left\{
\begin{array}{ll}
\eta(x+T-t+\epsilon)-\eta(x+T-t)		&  x\in [-T,t-T]\\
\eta(x+T-t+\epsilon)-\eta(0)-\sigma W_{T}(x)+ \sigma W_{t}		&	x\in
[t-T,t-T+\epsilon]\\
\sigma W_{t}- \sigma W_{t+\epsilon}				& 	
		x\in [t-T+\epsilon,0].
\end{array}
\right.
\end{equation}
}}
%Proposition \ref{Pdef flow} implies immediately $ \left\| Y_{t}^{s+\epsilon,\eta}-Y_{t}^{s,\eta} \right\|_{\infty} \xrightarrow[\epsilon \to 0]{} 0$, so that
%\eqref{eq CONVC} yields.
%% OLD VERSION
%\[
%\left( Y_{T}^{t+\epsilon,\eta}-Y_{T}^{t,\eta}\right)(x)
%=
%\left\{
%\begin{array}{ll}
%\eta(x+T-t+\epsilon)-\eta(x+T-t)		&  x\in [-T,t-T]\\
%\eta(x+T-t+\epsilon)-\eta(0)-W_{T}(x)+W_{t}		&	x\in [t-T,t-T+\epsilon]\\
%W_{t}-W_{t+\epsilon}				& 			x\in [t-T+\epsilon,0]
%\end{array}
%\right.
%\]
%Taking the supremum for $x\in [-T,0]$ we conclude on the validity of \eqref{eq CONVC}.
\end{enumerate}
\end{rem}
%
%Next results links Fr\'echet and Malliavin derivatives. Those tools will be used in the proof of the
%Theorem \ref{thm 2 derivate u}.
%
%
%

\subsection{About a Markovian stochastic flow and functional Markovian stochastic flow
% $\sigma:[0,T]\times \R\longrightarrow \R$
}		\label{sec BrowStoF}

%First we need now to develop some technical preliminaries related to a flow $Y^{s,\eta}_t$.
% In this section $\omega \in \Omega$ will be fixed.
%After some propositions we give the first fundamental flow property for Brownian stochastic flow, see \eqref{eq: flow property}.
%
%
The Brownian (resp. functional Brownian) stochastic flow can be generalized considering $\sigma:[0,T] \times \R \rightarrow \R$  Lipschitz with linear growth, i.e. not necessarily constant. We introduce the Markovian flow and we show some properties.

Let $\sigma, b:[0,T] \times \R \rightarrow \R$ being Lipschitz functions
with linear growth. Let, for every $s \in [0,T[, x \in \R$,
$X = X^{s,x} $ be the solution of the SDE
\begin{equation}		\label{eq SDE}
X_t = x + \int_s^t \sigma(u,X_u)dW_u + \int_s^t b(u,X_u)du, \quad t \in [s,T].
\end{equation}
Let again $\Delta:=\{ (s,t)| \ 0\leq s\leq t\leq T\}$.
It is well-known that the real-valued random field
  $(s,t,x)\mapsto X_t^{s,x}$ defined over $\Delta\times  \R \longrightarrow \R$,
%the real-valued random field $(s,t,x)\mapsto X_t^{s,x}$
admits a continuous modification.

% by $\left( X_t^{s,x}\right)_{0\leq s\leq t\leq T, x\in \R}$
%the real-valued random field $(s,t,x)\mapsto X_t^{s,x}$ defined over $\Delta\times  \R \longrightarrow \R$, introduced above.

\begin{dfn}			\label{def MArk flow}
%Let again $\Delta:=\{ (s,t)| \ 0\leq s\leq t\leq T\}$ and $\eta\in C([-T,0])$.
\begin{enumerate}
\item
% by the solution of \eqref{eq SDE}.  {\color{red} FRA specificare tipo sol SDE?}
The random field $(s,t,x)\mapsto X_t^{s,x}$
 will be called \textbf{Markovian stochastic flow}.
\item % A \textbf{functional Brownian stochastic flow with values in $C([-T,0])$ and denoted by $\left( Y_{t}^{s,\eta}\right)_{0\leq s\leq t\leq T, \eta \in C([-T,0])}$ which is a
We denote by $\left( Y_{t}^{s,\eta}\right)_{0\leq s\leq t\leq T, \eta \in C([-T,0])}$
the  random field defined over
$\Delta\times  C([-T,0]) \longrightarrow C([-T,0])$ by
\begin{equation}			\label{eq Br flowBis}
(s,t,\eta)\mapsto Y_{t}^{s,\eta}(x)=
\left\{
\begin{array}{ll}
\eta(x+t-s)  			& x\in [-T,s-t[\\
X_t^{s,\eta(0)}	& x\in [s-t,0].
\end{array}
\right.
\end{equation}
This will be called \textbf{functional Markovian stochastic flow}.
\end{enumerate}
\end{dfn}
\begin{rem}      \label{SBMBar}
  \begin{enumerate}
 \item The Brownian flow $(X^{s,x}_t)$ introduced in Definition \ref{def flow} is a particular case of the Markovian flow when $\sigma(t,x)=\sigma$, $\sigma$ a constant.
We could have formulated this chapter in this more general framework but for simplicity of exposition we have restricted us to the case $\sigma$ constant.
%\end{rem}
%Let $Y_{t}^{s,\eta}$ be the functional Brownian stochastic flow and $X_{r}^{s,x}$ be the Markovian stochastic flow, then the following holds.
%\begin{enumerate}
\item
The Markovian stochastic flow
% admits a continuous modification in $(s,t,x)$.
 %It can be considered as an $\mathbb{R}$-valued continuous random field and it
 verifies the  flow property for $0\leq s\leq t\leq r\leq T$,
\begin{equation}		\label{FP}
X_{r}^{s,x}=X_r^{t,X_t^{s,x}} \ .
\end{equation}
We set
\begin{equation}		\label{eq flow}
Y_{t}^{s,\eta}(x)
=
\left\{
\begin{array}{ll}
\eta(x+t-s)  			& x\in [-T,s-t]\\
X_{t+x}^{s,\eta(0)}	& x\in [s-t,0]. \\
\end{array}
\right.
\end{equation}
The functional flow $(Y_{t}^{s,\eta})$ coincides of course with \eqref{eq Br flow}
when $(X^{s,x}_t)$ is given by \eqref{defXF}.
\end{enumerate}
% \item A \textbf{time-homogeneity property in law}. As a particular case of Proposition \ref{RT55}, or using the following equality in law for a given standard Brownian motion $\overline{W}$,
% \be		\label{eq RFGT}
% Y_{t}^{s,\eta}(x)
% =
% \left\{
% \begin{array}{ll}
% \eta(x+t-s)  			& x\in [-T,s-t]\\
% \eta(0)+\sigma \overline{W}_{t-s+x}	& x\in [s-t,0],\\
% \end{array}
% \right. ,
% \ee
% where $\overline{W}_{x}:=W_{s+x}-W_{s}$.
% %\item As a particular case of Proposition \ref{RT55}, it is immediate to verity that, for every $x\in [-T,0]$, $Y^{0,\eta}_{t-s}(x) \sim Y^{s,\eta}_{t}(x)$.
% \end{enumerate}
\end{rem}
The following lemma shows a ``flow property'' for the functional flow.
\begin{lem}  \label{lemma 1}
Let $\eta\in C([-T,0])$, for $0\leq  s \leq t\leq r \leq T$. Then
\begin{equation}			\label{eq: flow property}
Y_{r}^{s,\eta}=Y_{r}^{t,Y_{t}^{s,\eta}} .
\end{equation}
\end{lem}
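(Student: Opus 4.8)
The plan is to verify the flow identity \eqref{eq: flow property} pointwise in $x\in[-T,0]$, distinguishing three regimes determined by how $x$ compares with the "cut points" $s-t$, $t-r$ and $s-r$. Since $0\le s\le t\le r\le T$, we have the ordering $s-r\le s-t$ and $s-r\le t-r\le 0$. The three cases to treat are: (i) $x\in[-T,s-r]$, where both sides should be purely deterministic and equal to $\eta(x+r-s)$; (ii) $x\in[s-r,t-r]$ (the regime where $Y^{s,\eta}$ has already "left" the deterministic part but $Y^{t,\,\cdot}$ has not); and (iii) $x\in[t-r,0]$, where both constructions are in their "stochastic" part and the identity reduces to the genuine flow property \eqref{FP} of the Markovian flow $X$.

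**First I would** fix $\omega$ (as the excerpt does throughout the section) and expand the right-hand side using Definition \ref{def MArk flow}: writing $\zeta:=Y_t^{s,\eta}\in C([-T,0])$, we have $Y_r^{t,\zeta}(x)=\zeta(x+r-t)$ for $x\in[-T,t-r[$ and $Y_r^{t,\zeta}(x)=X_r^{t,\zeta(0)}$ — more precisely $X_{r+x}^{t,\zeta(0)}$ in the pointwise form \eqref{eq flow} — for $x\in[t-r,0]$. The key numerical observation is that $\zeta(0)=Y_t^{s,\eta}(0)=X_t^{s,\eta(0)}$, so that in case (iii) the right-hand side is $X_{r+x}^{t,\,X_t^{s,\eta(0)}}$, which by \eqref{FP} (applied with the triple $s\le t\le r+x$, valid since $x\ge t-r$) equals $X_{r+x}^{s,\eta(0)}=Y_r^{s,\eta}(x)$. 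That disposes of case (iii).

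**For the remaining two cases**, when $x+r-t<0$, i.e. $x<t-r$, I substitute the value $\zeta(x+r-t)=Y_t^{s,\eta}(x+r-t)$, and now apply the definition of $Y_t^{s,\eta}$ one more time at the argument $x+r-t$: this argument lies in $[-T,s-t[$ precisely when $x+r-t<s-t$, i.e. $x<s-r$ (case (i)), giving $Y_t^{s,\eta}(x+r-t)=\eta((x+r-t)+t-s)=\eta(x+r-s)$, which matches $Y_r^{s,\eta}(x)=\eta(x+r-s)$ since $x<s-r$. If instead $s-r\le x<t-r$ (case (ii)), then $x+r-t\in[s-t,0]$, so $Y_t^{s,\eta}(x+r-t)=X_{t+(x+r-t)}^{s,\eta(0)}=X_{x+r}^{s,\eta(0)}$, which is exactly the stochastic branch value of $Y_r^{s,\eta}(x)$ since here $x\ge s-r$. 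Collecting the three cases gives $Y_r^{t,Y_t^{s,\eta}}(x)=Y_r^{s,\eta}(x)$ for every $x\in[-T,0]$, hence the equality in $C([-T,0])$.

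**The only genuine subtlety** — what I'd expect to be the main point to get right — is the bookkeeping of the interval endpoints and making sure the boundary cases $x=s-r$ and $x=t-r$ are consistent under either convention (the excerpt uses half-open $[-T,s-t[$ in \eqref{eq Br flowBis} but closed $[-T,s-t]$ in \eqref{eq flow}); continuity of $\eta$ and of $X^{s,\cdot}$ makes the two branches agree at the seam, so the choice is immaterial. No analytic estimates are needed — the lemma is purely the propagation of the Markov flow property \eqref{FP} through the "window" reparametrization, together with the trivial transport of the frozen segment $\eta(\cdot+t-s)$.
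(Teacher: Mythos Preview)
Your proof is correct and follows essentially the same approach as the paper's own proof: both set $\tilde\eta=\zeta:=Y_t^{s,\eta}$, expand $Y_r^{t,\zeta}(x)$ via the definition, and split into the same three regimes $x\in[-T,s-r]$, $x\in[s-r,t-r]$, $x\in[t-r,0]$, invoking the Markov flow identity \eqref{FP} for $X$ in the third one. Your treatment is slightly more explicit (you spell out the endpoint consistency via continuity, which the paper leaves implicit), but there is no substantive difference in method.
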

\begin{proof} \
It follows from the flow property \eqref{FP} for the Markovian stochastic flow.\\
For fixed $\omega \in \Omega$, we inject $\tilde{\eta}=Y_{s}^{t,\eta}$ into $Y_{r}^{t,\tilde{\eta}}$
obtaining
\[
Y_{r}^{t,Y_{t}^{s,\eta}}(x)=
\left\{
\begin{array}{ll}
\eta(x+r-s) 				& x\in [-T,s-r]\\
X^{s,\eta(0)}_{r+x}& x\in [s-r,t-r]\\
X^{t,\tilde{\eta}(0)}_{r+x}=X_{r+x}^{t,X_t^{s,\eta(0)}}=X^{s,\eta(0)}_{r+x}	& x\in [t-r,0]
\end{array}
\right\}
=Y_{r}^{s,\eta}(x) \ ,
\]
which concludes the proof of the Lemma.
\end{proof}

We concentrate now on the derivatives of the functional Markovian stochastic flow. Let $t \in [0,T[$.
 %We observe that for the Brownian stochastic flow it holds the following.
%These formulae will be used in the discussion of the first order and second order Fr\'echet derivatives of $u$ with respect to $\eta$.

%We fix $\rho \in [-T,0]$.
By \eqref{eq flow} we remind that
\be   \label{TGB11}
Y_{T}^{t,\eta}(\rho)=
\left\{
\begin{array}{ll}
\eta(\rho+T-t)  			& \rho \in [-T,t-T[\\
X^{t,\eta(0)}_{T+\rho}	& \rho\in [t-T,0].
\end{array}
\right.
\ee
It is possible to calculate formally
the first
and second  derivatives of $Y_{T}^{t,\eta}(\rho)$
for $\rho \in [-T,0]$.

%using \eqref{TGB11}, we obtain the first equation in \eqref{eq DerY}.
%The second part can be obtained through similar methods.
%\end{proof}

%
\begin{rem}		\label{rem RA}
%The first order  Fr\'echet derivative with respect to the argument $\eta$ for fixed $t$ and $\eta$, $DY^{t,\eta}_T:C([-T,0])\lra C([-T,0])$ is a linear continuous functional.
For $\rho\in [-T,0]$ then  $Y_T^{t,\cdot}(\rho): C([-T,0])\times \Omega\lra \R$ and
$DY_T^{t,\cdot}(\rho):C([-T,0])\times \Omega\lra \apt C([-T,0]) \cpt ^*=\mathcal{M}([-T,0])$.
In particular if $f\in C([-T,0])$,
\be \label{eq DerY2bis}
\prescript{}{\mathcal{M}([-T,0])}{\langle} DY_{T}^{t,\eta}(\rho)\ , \ f \rangle_{C([-T,0])}=\int_{[-T,0]} f(x ) D_{dx} Y^{t,\eta}_{T}(\rho,\omega).
\ee

In particular
%for a fixed $\rho \in [-T,0]$
we have
\be			\label{eq DerY2}	
D_{dx} Y^{t,\eta}_{T}(\rho)
=
\left\{
\begin{array}{ll}  \delta_{\rho+T-t}(dx) & \rho\in [-T,t-T[\\
\delta_{0}(dx) \partial_{\xi} X^{t,\eta(0)}_{T+\rho}
%\delta_{0}(dx) 	
& \rho\in [t-T,0]
\end{array}
\right.
\ee
and
\be		\label{q DerY22}
D^2_{dy\,dx} Y^{t,\eta}_{T}(\rho)
=
\left\{
\begin{array}{ll}  0 & \rho\in [-T,t-T[\\
\delta_{0}(dx)\delta_{0}(dy) \partial^2_{\xi\xi} X^{t,\eta(0)}_{T+\rho}
%=0	
& \rho\in [t-T,0].
\end{array}
\right.
\ee
\end{rem}
Avoiding some technicalities it is possible to evaluate the
first and second derivatives of the functional flow itself.
In the sequel $\eta$ will always be a generic element in $C([-T,0])$.
Let $(X^{s,x}_t)$  be the real stochastic flow as in \eqref{defXF} and the associated functional stochastic flow $(Y^{s,\eta}_t)$ as in Definition \ref{def flow}.

\begin{lem}			\label{lem L1re}
%The map $Y_T^{\cdot,\cdot}:[0,T]\times C([-T,0])\times \Omega\lra C([-T,0])$ is a.s. continuous.\\
%%% CRI DETTO PRIMA
%If we suppose $\partial_x\sigma$ H\"older continuous, then
%for each $t\in [0,T]$,
%%% ANCHE QUESTO DETTO PRIMA
Let $t \in [0,T[$.
\begin{enumerate}
\item
%For $t \in [0,T[$,
 The map $Y_T^{t,\cdot}:C([-T,0])\times \Omega\lra C([-T,0])$ acting as $\eta\mapsto Y_T^{t,\eta}$ is of class $C^{2}\apt C([-T,0]) \ ; C([-T,0]) \cpt$ a.s.
\item The derivatives
$DY_T^{t,\cdot}:C([-T,0])\times
\Omega\lra \mathcal{L}\apt C([-T,0]); C([-T,0])\cpt$ and $D^2 Y_T^{t,\cdot}:C([-T,0])\times
\Omega\lra \mathcal{B}\apt C([-T,0])\times C([-T,0]) ; C([-T,0])\cpt$
are characterized as follows.
% If we fix $\rho\in [-T,0]$,
 For $f, g\in C([-T, 0])$ we have
%and taking in account Remark \ref{der X Brown}
\be			\label{eq DerY}
\begin{split}
\rho\mapsto \int_{[-T,0]}D_{dx} Y^{t,\eta}_{T}(\rho)f(x)
&=
\left\{
\begin{array}{ll}  f(\rho+T-t) & \rho\in [-T,t-T[\\
f(0) \, \partial_{\xi} X^{t,\eta(0)}_{T+\rho}	& \rho\in [t-T,0]
\end{array}
\right.
\\
\mbox{ and }\\
\rho\mapsto \int_{[-T,0]^2}D^2_{dy\,dx} Y^{t,\eta}_{T}(\rho)f(x)g(y)
&=
\left\{
\begin{array}{ll}  0 & \rho\in [-T,t-T[\\
f(0) g(0) \, \partial^2_{\xi\xi} X^{t,\eta(0)}_{T+\rho}& \rho\in [t-T,0].
\end{array}
\right.
\end{split}
\ee
\end{enumerate}
\end{lem}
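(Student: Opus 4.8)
The plan is to reduce everything to known facts about the real Markovian (here Brownian) stochastic flow $x\mapsto X^{t,x}_{T+\rho}$, and then to transport the regularity of that scalar flow to the $C([-T,0])$-valued map $\eta\mapsto Y^{t,\eta}_T$ via the explicit formula \eqref{TGB11}. First I would recall that, under the standing assumption that $\sigma$ (and in the general subsection $b$) is Lipschitz with linear growth, the flow $x\mapsto X^{s,x}_t$ is a.s.\ of class $C^2$ in the initial datum $x$, with derivatives $\partial_\xi X^{t,x}_{r}$ and $\partial^2_{\xi\xi} X^{t,x}_r$ solving the usual (linear, resp.\ affine) variational SDEs; in the present constant-$\sigma$ case this is even trivial since $X^{t,x}_r = x + \sigma(W_r - W_t)$, so $\partial_\xi X^{t,x}_r \equiv 1$ and $\partial^2_{\xi\xi}X^{t,x}_r\equiv 0$. (Stating it for general $\sigma$ costs nothing and matches Remark \ref{SBMBar}.) Fix $\omega\in\Omega$ and $t\in[0,T[$ throughout.

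Next I would write, for $\eta, k\in C([-T,0])$ and $\rho\in[-T,0]$, the difference quotient of $Y^{t,\cdot}_T(\rho)$ in the direction $k$ using \eqref{TGB11}: on $\rho\in[-T,t-T[$ the map $\eta\mapsto Y^{t,\eta}_T(\rho)=\eta(\rho+T-t)$ is \emph{linear} in $\eta$, so its Fréchet derivative is the evaluation functional $k\mapsto k(\rho+T-t)$, i.e.\ the measure $\delta_{\rho+T-t}(dx)$, and the second derivative vanishes; on $\rho\in[t-T,0]$ the map is $\eta\mapsto X^{t,\eta(0)}_{T+\rho}$, the composition of the bounded linear functional $\eta\mapsto\eta(0)$ with the $C^2$ scalar map $x\mapsto X^{t,x}_{T+\rho}$, so by the chain rule its first derivative in the direction $k$ is $k(0)\,\partial_\xi X^{t,\eta(0)}_{T+\rho}$ and its second derivative in the directions $(k,g)$ is $k(0)g(0)\,\partial^2_{\xi\xi}X^{t,\eta(0)}_{T+\rho}$. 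This already gives the pointwise-in-$\rho$ formulas \eqref{eq DerY2}, \eqref{q DerY22} of Remark \ref{rem RA}, hence \eqref{eq DerY}.

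The substantive point, i.e.\ the content of item 1 and 2, is that these candidate derivatives really are the \emph{Fréchet} derivatives of the $C([-T,0])$-valued maps $\eta\mapsto Y^{t,\eta}_T$, $\eta\mapsto DY^{t,\eta}_T$, and that $\eta\mapsto Y^{t,\eta}_T$, $\eta\mapsto DY^{t,\eta}_T$, $\eta\mapsto D^2Y^{t,\eta}_T$ are continuous with values in $C([-T,0])$, $\mathcal{L}(C([-T,0]);C([-T,0]))$, $\mathcal{B}(C([-T,0])\times C([-T,0]);C([-T,0]))$ respectively. So I would: (i) check that the candidate first derivative, the map $C([-T,0])\ni k\mapsto \big(\rho\mapsto k(\rho+T-t)\1_{[-T,t-T[}(\rho) + k(0)\partial_\xi X^{t,\eta(0)}_{T+\rho}\1_{[t-T,0]}(\rho)\big)$, indeed lands in $C([-T,0])$ — here one uses continuity of $\rho\mapsto \partial_\xi X^{t,\eta(0)}_{T+\rho}$ (constant $\equiv 1$ in our case) and the matching of the two branches at $\rho=t-T$, which holds because $\partial_\xi X^{t,\eta(0)}_{0}=1$ equals the "evaluation-at-argument" value there; (ii) control the Taylor remainder uniformly in $\rho$: $\big\|Y^{t,\eta+k}_T - Y^{t,\eta}_T - DY^{t,\eta}_T k\big\|_\infty = \sup_{\rho\in[t-T,0]}\big|X^{t,\eta(0)+k(0)}_{T+\rho} - X^{t,\eta(0)}_{T+\rho} - k(0)\partial_\xi X^{t,\eta(0)}_{T+\rho}\big|$ (the other branch being exactly linear), which is $o(|k(0)|)=o(\|k\|_\infty)$ by the scalar $C^1$ property, uniformly in the finite range of $\rho$; and (iii) get continuity in $\eta$ of the derivatives from continuity of $x\mapsto\partial_\xi X^{t,x}_{r}$, $x\mapsto\partial^2_{\xi\xi}X^{t,x}_r$ together with continuity of $\eta\mapsto\eta(0)$ and an analogous uniform-in-$\rho$ estimate; then repeat for the second derivative. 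The main obstacle is not conceptual but bookkeeping: making the "$o(\cdot)$ uniformly in $\rho$" statements precise — in the general-$\sigma$ setting this requires uniform-in-$r\in[t,T]$ bounds and moduli for the variational flows $\partial_\xi X^{t,x}_r$, $\partial^2_{\xi\xi}X^{t,x}_r$ (standard $L^p$-Kolmogorov / Sobolev-embedding estimates à la Kunita), whereas in the constant-$\sigma$ case it collapses completely because $\partial_\xi X\equiv 1$, $\partial^2_{\xi\xi}X\equiv 0$ and every branch of $Y^{t,\eta}_T$ is affine in $\eta$, so the remainders are identically zero and $C^2$-smoothness is immediate. I would present the argument for constant $\sigma$ (where it is essentially a one-line verification from \eqref{TGB11}) and remark that the general case follows the same lines using the classical regularity of SDE flows.
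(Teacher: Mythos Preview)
Your proposal is correct and in fact more thorough than what the paper itself provides: the paper does not give a proof of this lemma at all, merely stating the formal pointwise derivatives in the preceding Remark \ref{rem RA} (equations \eqref{eq DerY2}--\eqref{q DerY22}) after the sentence ``Avoiding some technicalities it is possible to evaluate the first and second derivatives of the functional flow itself.'' Your write-up supplies exactly those omitted technicalities --- the verification that the candidate derivatives land in $C([-T,0])$ via the matching condition $\partial_\xi X^{t,\eta(0)}_{t}=1$ at $\rho=t-T$, the uniform-in-$\rho$ control of the Taylor remainder, and the observation that for constant $\sigma$ the map is affine in $\eta$ so everything collapses --- and is the natural fleshing-out of the paper's sketch.
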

In the  remark below
%concernes the derivatives for the functional Brownian flow, i.e.
 we express Lemma \ref{lem L1re}  in the case of the functional Brownian flow.
\begin{rem}  \label{der X Brown}
%{\color{red} FRA: }
%For the functional Brownian stochastic flow, i.e.
When $\sigma(t,x) \equiv \sigma$ is a constant,
% it is trivial to verify
by (\ref{defXF})
 the following holds.
\begin{enumerate}
\item
\be \label{eq_der_X_Brown}
\partial_\xi X^{s,\xi}_{t}=1 \quad \mbox{and}\quad \partial^{2}_{\xi\xi} X^{t,\xi}_{s}=0.
\ee
\item  By \eqref{eq_der_X_Brown} the derivatives given by \eqref{eq DerY} for the functional Brownian flow reduce to
\be			\label{eq DerYBr}
\begin{split}
\rho\mapsto \int_{[-T,0]}D_{dx} Y^{t,\eta}_{T}(\rho)f(x)
&=
\left\{
\begin{array}{ll}  f(\rho+T-t) & \rho\in [-T,t-T[\\
f(0)	& \rho\in [t-T,0]
\end{array}
\right.\\
 \mbox{ and }
\\
\rho\mapsto \int_{[-T,0]^2}D^2_{dy\,dx} Y^{t,\eta}_{T}(\rho)f(x)g(y)
&= 0 \quad \quad \quad \quad \rho\in [-T,0].
\end{split}
\ee
\end{enumerate}
\end{rem}
%

%We continue  applying the properties of previous stochastic flow to the evaluation of conditional expectations.
%
%%%%%%%%%%%%%%%%%%%%%%%%%%%%%%%%%%%%%%%%%%%
%
%

\subsection{The existence result for smooth Fr\'echet terminal condition}			
In this section, Theorem \ref{thm 2 derivate u}  states the existence result and Fr\'echet regularity of the solution of the infinite dimensional PDE \eqref{kolmogorov} when $\sigma$ is constant and $H$ is $C^3(C([-T,0]))$.
%We will study  of $u(t,\eta)=\mathbb{E}\left[ H(Y^{t,\eta}_{T})\right]$ defined in \eqref{eq u} which solves the PDE \eqref{kolmogorov}, which is seen
%as a Banach space-valued PDE.
%
%
%
In particular we will give conditions on the function $H$ such that $u$
defined in \eqref{eq u} solves the PDE stated on \eqref{kolmogorov}.
Those conditions are reasonable but they are however not optimal.
% We are aware that for the moment the assumptions are not optimal, but
%we decided however to formulate a reasonable framework.
% not too heavy, in which a Clark-Ocone type formula is valid.
%
%
%				Thm
\begin{thm}			\label{thm 2 derivate u}
Let $H\in C^{3}\left(C([-T,0])\right)$ such that $D^{3}H$ has polynomial growth (for instance bounded).
Let $u$ be defined by $u(t,\eta)=\mathbb{E}\left[ H\big( Y_{T}^{t,\eta} \big)
\right],  t \in [0,T], \eta \in C([-T,0]).$
% =\mathbb{E}\left[ H\big( Y_{T-t}^{0,\eta}\big) \right]$. %\eqref{eq u} or equivalently by \eqref{eq u om}.
\begin{enumerate}
\item[1)]
Then $u\in C^{0,2}([0,T]\times C([-T,0]))$. %and $Du(t,\eta)$ and $D^{2}u(t,\eta)$ are given by \eqref{eq Du} and \eqref{eq D2u}.
\item[2)] Suppose moreover the following for every $\eta \in C([-T,0])$:
\begin{itemize}
\item[i)] The measure $D_{dx}H(\eta)$ is Lebesgue absolutely continuous. We will denote $x\mapsto D_{x}H(\eta)$ its density and we suppose that $DH(\eta)\in H^{1}([-T,0])$, i.e. function $x\mapsto D_{x}H(\eta)$ is in $H^{1}([-T,0])$.
%for every fixed $\eta$.
\item[ii)]
$DH$ has polynomial growth in $H^{1}([-T,0])$, i.e.
there is $p\geq 1$ such that
\begin{equation}			\label{eq HS11}
\eta\mapsto \| DH(\eta) \|_{H^{1}}
\leq
const\left(  \|\eta\|^{p}_{\infty} +1\right)	\; .		
\end{equation}
In particular
\begin{equation}			\label{eq HS11bis}
\sup_{t \in [-T,0]} \vert  D_x H(\eta) \vert \le const
 \left(  \|\eta\|^{p}_{\infty} +1\right)
\leq
const\left(  \|\eta\|^{p}_{\infty} +1\right)	\; .
\end{equation}
\item[iii)] The map
\begin{equation}		\label{eq HS2}
\eta \mapsto DH(\eta)
\textrm{ considered as }
C([-T,0])\rightarrow H^{1}([-T,0]) \textrm{ is continuous.}
\end{equation}
\end{itemize}

\noindent
Then $u\in C^{1,2}([0,T]\times C([-T,0]))$ and % and $\partial_{t}u(t,\eta)$is given by \eqref{eq dtu} and
 $u$ is a classical solution of \eqref{kolmogorov}  in $C([-T,0])$, i.e. $u$ solves
\begin{equation}		 \label{kolmogorovF}
\begin{dcases}
\partial_{t}u(t,\eta)+\int_{]-t,0]} D^{\perp}_{dx}u(t,\eta)\,d^{-}\eta(x) +\frac{1}{2}\sigma^2\langle D^2u(t,\eta),\1_{\{0\}}\otimes^2 \rangle =0 & \\
u(T,\eta)=H(\eta). &  \\
\end{dcases}
\end{equation}
%where $D^{ac}u$ is the absolutely continuous term of measure $Du\,(t,\eta)$ and $D^{2}_{0,0}u(t,\eta)=D^{2}u(t,\eta)\left( \{0,0\}\right)$.
\end{enumerate}
%$\da$IPOTESI SUL TERMINE ANTICIPANTE??$\ua$.
\end{thm}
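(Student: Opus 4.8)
The plan is to show that $u(t,\eta)=\mathbb E[H(Y_T^{t,\eta})]$ inherits, by differentiation under the expectation, the regularity of $H$ composed with the flow, and then to verify the PDE by a first-order Taylor expansion in $t$ combined with the flow property of Lemma \ref{lemma 1}. Concretely, I would proceed in four stages: (a) establish that $\eta\mapsto u(t,\eta)$ is $C^2$ with the derivatives obtained by interchanging $D$ and $\mathbb E$, using Lemma \ref{lem L1re} and Remark \ref{der X Brown} (so that for the Brownian flow $D Y^{t,\eta}_T$ is a fixed bounded operator and $D^2 Y^{t,\eta}_T=0$); (b) establish joint continuity in $(t,\eta)$ of $u$, $Du$, $D^2u$, giving point 1); (c) under hypotheses i)--iii), establish that $t\mapsto u(t,\eta)$ is $C^1$ on $[0,T[$ and compute $\partial_t u$; (d) assemble the pieces to show $\mathcal L u=0$, and note $u(T,\cdot)=H$ is immediate from $Y_T^{T,\eta}=\eta$.

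**Details of stages (a)--(b).** For fixed $t$, by the chain rule for Fréchet derivatives and Lemma \ref{lem L1re}, $Du(t,\eta)=\mathbb E[DH(Y_T^{t,\eta})\circ DY_T^{t,\eta}]$ and, since $D^2Y_T^{t,\eta}=0$ in the Brownian case, $D^2u(t,\eta)=\mathbb E[D^2H(Y_T^{t,\eta})(DY_T^{t,\eta}\cdot,DY_T^{t,\eta}\cdot)]$. The interchange of $D$ and $\mathbb E$ is justified by the mean value inequality together with the bounds \eqref{eq MAGG1}--\eqref{eq MAGG} on $\|Y_T^{t,\eta}\|_\infty$ and the polynomial growth of $D^3H$, which provide a uniform integrable dominating function on bounded neighborhoods of $\eta$; the same bounds plus the continuity of the field $(t,\eta)\mapsto Y_T^{t,\eta}$ from Proposition \ref{Pdef flow} and continuity of $D^2H$ give joint continuity of $D^2u$ by dominated convergence, hence $u\in C^{0,2}$.

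**Details of stage (c): the time derivative.** This is where the hypotheses i)--iii) enter and is the main obstacle. Using the flow property $Y_T^{t,\eta}=Y_T^{t+\epsilon,Y_{t+\epsilon}^{t,\eta}}$ (Lemma \ref{lemma 1}) one writes, for small $\epsilon>0$,
\[
u(t+\epsilon,\eta)-u(t,\eta)=\mathbb E\big[H(Y_T^{t+\epsilon,\eta})-H(Y_T^{t+\epsilon,Y_{t+\epsilon}^{t,\eta}})\big],
\]
and applies a first-order Taylor expansion of $H$ along the segment joining $Y_T^{t+\epsilon,Y_{t+\epsilon}^{t,\eta}}$ and $Y_T^{t+\epsilon,\eta}$, the increment being $Y_T^{t+\epsilon,\eta}-Y_T^{t,\eta}$ whose explicit three-piece form is given in \eqref{eq DIFFER}. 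Dividing by $\epsilon$ and letting $\epsilon\to 0$, the three pieces of \eqref{eq DIFFER} produce respectively: a term converging to $\int_{]-t,0]} D^{ac}_xH$-type contribution against $d^-\eta$ coming from the ``$\eta(x+T-t+\epsilon)-\eta(x+T-t)$'' piece on $[-T,t-T]$ (this is exactly where absolute continuity of $D_{dx}H$ and the $H^1$-regularity in i) are needed, to make sense of and pass to the limit in the deterministic forward integral, invoking Definition \ref{def FRW and BAWKW det int} and Proposition \ref{prop A}); a negligible term from the shrinking middle interval $[t-T,t-T+\epsilon]$; and the term $-\sigma(W_{t+\epsilon}-W_t)$ on $[t-T+\epsilon,0]$ which, after taking expectation of the first-order Taylor term, contributes nothing at order one but whose second-order Taylor term yields $\tfrac12\sigma^2\langle D^2u(t,\eta),\1_{\{0\}}\otimes\1_{\{0\}}\rangle$ via $\mathbb E[(W_{t+\epsilon}-W_t)^2]=\epsilon$. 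The growth bounds ii) and the continuity iii) are used to justify dominated convergence and the continuity of $\partial_t u$ in $(t,\eta)$, giving $u\in C^{1,2}$.

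**Assembling the PDE.** Collecting the limits from stage (c), one obtains
\[
\partial_t u(t,\eta)=-\int_{]-t,0]}D^{\perp}_{dx}u(t,\eta)\,d^-\eta(x)-\tfrac12\sigma^2\langle D^2u(t,\eta),\1_{\{0\}}\otimes\1_{\{0\}}\rangle,
\]
which is precisely \eqref{kolmogorovF}; here one must identify the measure $D^\perp_{dx}u(t,\eta)$ appearing naturally from the $\eta$-shift term with the singular part of $Du(t,\eta)$ computed in stage (a), an identification that is consistent because $DY_T^{t,\eta}$ maps $f$ to $\rho\mapsto f(\rho+T-t)\1_{[-T,t-T[}+f(0)\1_{[t-T,0]}$, so the ``bulk'' part of $Du$ is a shifted copy of $D_xH$ and the ``$f(0)$'' part is concentrated at $\{0\}$. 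The terminal condition $u(T,\eta)=\mathbb E[H(Y_T^{T,\eta})]=H(\eta)$ holds since $Y_T^{T,\eta}=\eta$ deterministically. The main obstacle, as indicated, is the careful $\epsilon\to 0$ analysis of the first piece of \eqref{eq DIFFER}: one needs the forward integral $\int_{]-t,0]}D^{ac}_xu(t,\eta)\,d^-\eta(x)$ to emerge as the limit of $\int \frac{\eta(x+\epsilon)-\eta(x)}{\epsilon}$ integrated against the $H^1$-density of $DH$ pushed through the flow, and this is exactly the content that hypotheses i)--iii) are designed to guarantee uniformly in the relevant parameters.
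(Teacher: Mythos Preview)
Your overall architecture---stages (a), (b), (d)---matches the paper's, and your identification of $Du$, $D^2u$ via the chain rule with $D^2Y_T^{t,\eta}=0$ is exactly what the paper does. The gap is in stage (c), specifically in your treatment of the piece $-\sigma(W_{t+\epsilon}-W_t)$ of \eqref{eq DIFFER} on $[t-T+\epsilon,0]$.

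You assert that ``after taking expectation of the first-order Taylor term, [it] contributes nothing at order one.'' This is false: the random variable $\langle DH(Y_T^{t,\eta}),\1_{]t-T,0]}\rangle$ depends on the entire Brownian path on $[t,T]$, hence is \emph{not} independent of (nor orthogonal to) $W_{t+\epsilon}-W_t$. The term you are dismissing is the paper's $I_3(\epsilon,t,\eta)=\mathbb{E}\big[\langle DH(Y_T^{t,\eta}),\1_{]t-T,0]}\rangle\cdot\frac{\sigma(W_t-W_{t+\epsilon})}{\epsilon}\big]$, and its limit is \emph{not} zero but $-\sigma^2\,\mathbb{E}\big[\langle D^2H(Y_T^{t,\eta}),\1_{]t-T,0]}^{\otimes 2}\rangle\big]$. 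The paper obtains this via integration by parts on Wiener space (the duality formula \eqref{eq INTBPWiener}) together with Proposition \ref{propP1112}, which computes the Malliavin derivative of $\langle DH(Y_T^{t,\eta}),\1_{]t-T,0]}\rangle$ and brings out a $D^2H$ term. This Malliavin step is nowhere in your proposal.

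The correct bookkeeping is then: the first-order term $I_3$ contributes $-\sigma^2\,\mathbb{E}[\langle D^2H,\1^{\otimes 2}\rangle]$, while the genuine second-order remainder term (the paper's $A_{31}$, coming from $\frac{1}{2}\langle D^2H,\Delta\otimes\Delta\rangle$ with the Brownian-increment piece squared) contributes $+\frac{\sigma^2}{2}\,\mathbb{E}[\langle D^2H,\1^{\otimes 2}\rangle]$, using that $(W_{t+\epsilon}-W_t)^2/\epsilon$ converges weakly to $1$ in $L^2(\Omega)$ (Lemma \ref{lem LA31}). Only the sum $I_3+A_{31}=-\frac{\sigma^2}{2}\,\mathbb{E}[\langle D^2H,\1^{\otimes 2}\rangle]$ gives the correct coefficient in \eqref{kolmogorovF}. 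If you drop $I_3$ as you propose, you end up with the wrong sign on the second-order term of the PDE. Note also that even for $A_{31}$ one cannot simply invoke $\mathbb{E}[(W_{t+\epsilon}-W_t)^2]=\epsilon$, since $D^2H(Y_T^{t,\eta})$ and $(W_{t+\epsilon}-W_t)^2$ are again not independent; the paper's Lemma \ref{lem LA31} and Lemma \ref{lemWS} handle precisely this issue.
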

\begin{rem}
Contrarily to the (non-degenerate) situation of Section \ref{Toy model},
Theorem \ref{thm 2 derivate u} holds even when $\sigma = 0$.
In that case one gets a first-order equation;
the regularity on $H$ could be relaxed but we are not specifically
interested in this refinement.
\end{rem}

\begin{rem}		\label{rm PMLOP}
\begin{enumerate}
\item
Assumption \eqref{eq HS11} implies
in particular that $DH$ has polynomial growth in $C([-T,0])$, i.e.
there is $p\geq 1$ such that
\begin{equation}			\label{eq HS1}
\eta\mapsto \sup_{x\in [-T,0]}\vert D_{x}H(\eta) \vert=
\| DH(\eta) \|_{\infty}
\leq
const\left(  \|\eta\|^{p}_{\infty} +1\right).			
\end{equation}
Indeed it is well-known that
$H^{1}([-T,0]) \hookrightarrow  C([-T,0])$ and for a function $f\in H^{1}$ it holds $\|f\|_{\infty}\leq const \| f \|_{H^{1}}$.
\item By a Taylor's expansion, given for instance by
 Theorem 5.6.1 in \cite{cartan}, the fact that $D^{3}H$ has polynomial growth implies that
$H$, $DH$ and $D^{2}H$ have also polynomial growth in $C([-T,0])$.
\item $Du(t,\eta)$, $D^{2}u(t,\eta)$ and $\partial_{t}u(t,\eta)$ will be explicitly expressed in term of $H$ at \eqref{eq Du}, \eqref{eq D2u} and \eqref{eq dtu}.
\end{enumerate}
\end{rem}

\begin{proof}

By expression \eqref{eq u} it is obvious that $u(T,\eta)=H(\eta)$. \\
%Without loss of generality in this proof we set $\sigma=1$.\\
\emph{Proof of 1).}

$\bullet$ {\bf Continuity of function $u$ with respect to time $t$.}\\
We consider a sequence $(t_{n})$ in $[0,T]$ such that
 $t_{n}\xrightarrow [n\rightarrow \infty]{} t_{0}$.
By Assumption,
 $H\in C^{0}(C([-T,0]))$.
Consequently, by Proposition \ref{Pdef flow}
%%%Lemma  \ref{lem conv unif flow}, we have
\begin{equation}		\label{eq H contin}
H\big( Y_{T-t_{n}}^{0,\eta}\big)
\xrightarrow [n\rightarrow \infty]{a.s.}
H\big( Y_{T-t_{0}}^{0,\eta}\big)  \; .
\end{equation}
By Remark \ref{rm PMLOP}.1. $H$ has also polynomial growth, therefore there is $p\geq 1$ such that
\[
\vert H(\zeta) \vert \leq const \; \left(1 + \sup_{x\in[-T,0]} |\zeta(x)|^{p}\right)		\hspace{2cm} \forall \; \zeta\in C([-T,0])   \; .
\]
By \eqref{eq MAGG0},
we observe that
\[
\begin{split}
\vert H(Y^{0,\eta}_{T-t})\vert
&
\leq
const \left( 1+ \left\| Y^{0,\eta}_{T-t} \right\|^{p}_{\infty}  \right)\leq
\\
&
\leq
const \left( 1+ \sup_{x\in [-T,0]}|\eta(x)|^{p}+
\sigma^p \sup_{t\leq T}|W_{t}|^{p} \right)  \; .
\end{split}
\]
By Lebesgue dominated convergence theorem, the fact that
$\sup_{t\leq T}|W_{t}|^{p}$ is integrable and \eqref{eq H contin}, it follows that
\begin{equation}		\label{eq continuity u}
u(t_{n},\eta)=\mathbb{E}\left[ H\big( Y_{T-t_{n}}^{0,\eta}\big) \right]
\xrightarrow [n\rightarrow \infty]{}
\mathbb{E}\left[ H\big( Y_{T-t_{0}}^{0,\eta}\big) \right]=u(t_{0},\eta)		\; .
\end{equation}

$\bullet$ \textbf {First order Fr\'echet derivative.}\\
We express now the derivatives of $u$ with respect to the derivatives of $H$.
We start with
$Du\,:[0,T]\times C([-T,0])\longrightarrow \shm([-T,0])$.
%  We will prove that
% \begin{equation}		\label{eq Du}
% D_{dx}u(t,\eta)=
% D^{\delta_{0} }u\,(t,\eta) \delta_{0}(dx)+D^{\perp}_{dx}u\,(t,\eta)
% \end{equation}
% where
% %
% %
% %
% %
% \begin{equation}	\label{eq D0u}
% D^{\delta_{0} }u\,(t,\eta)=
% \mathbb{E}\left[  \int_{[t-T,0]}D_{d\rho}H\big( Y_{T}^{t,\eta}\big)\right]
% %= \mathbb{E}\left[  \int_{t-T}^{0}D_{\rho}H\big( Y_{T}^{t,\eta}\big)d\rho \right]
% \end{equation}
% %
% %
% and
% \begin{equation}		\label{eq Dperpu}
% D^{\perp}_{dx}u(t,\eta)=
% \mathbb{E}\left[ D_{dx-T+t}H\big(Y_{T}^{t,\eta} \big)\right]\1_{[-t,0[}(x)=
% \left\{
% \begin{array}{ll}
% 0													&	x\in [-T,-t[\\
% \mathbb{E}\left[ D_{dx-T+t}H\big(Y_{T}^{t,\eta} \big) \right]			&	x\in\; [-t,0[ \ .
% \end{array}
% \right.
% \end{equation}
%Now we justify \eqref{eq Du}, \eqref{eq D0u} and \eqref{eq Dperpu}.\\
Omitting some details, by integration theory for every $t\in [0,T]$, $u (t, \cdot)$ is of class $C^{1}\apt C([-T,0])\cpt$.
By usual derivation rules for composition we have
$$ D_{dx}H \apt Y^{t,\eta}_T \cpt=\int_{[-T,0]} D_{d \rho} H \apt Y^{t,\eta}_T \cpt D_{dx} Y_T^{t,\eta}(\rho).$$
\be		\label{eq 4}
D_{dx}u\apt t,\eta \cpt=\mathbb{E}\left[ D_{dx}H \apt Y^{t,\eta}_T \cpt \right]=\mathbb{E}\left[ \int_{[-T,0]} D_{d \rho} H \apt Y^{t,\eta}_T \cpt D_{dx} Y_T^{t,\eta}(\rho) \right].
\ee
We compute explicitly \eqref{eq 4} using the expression \eqref{eq DerY2}.
%\eqref{eq DerY2bis}.
 Integrating with respect to $\rho$ (for a fixed $x$), we obtain the following.

\be\label{E422bis}
\begin{split}
D_{dx}u\apt t,\eta \cpt&
=\mathbb{E}\left[ \int_{[-T,t-T[} D_{d \rho} H \apt Y^{t,\eta}_T \cpt D_{dx} Y_T^{t,\eta}(\rho) \right]+\mathbb{E}\left[ \int_{[t-T,0]} D_{d \rho} H \apt Y^{t,\eta}_T \cpt D_{dx} Y_T^{t,\eta}(\rho) \right]\\
&
=\mathbb{E}\left[ \int_{[-T,t-T[}  D_{d \rho} H \apt Y^{t,\eta}_T \cpt \delta_{\rho+T-t}(dx)  \right]
+ \mathbb{E}\left[ \int_{[t-T,0]}  D_{d  \rho} H \apt Y^{t,\eta}_T \cpt  \; \right] \delta_{0}(dx).
\end{split}
\ee
Consequently
\begin{equation}		\label{eq Du}
D_{dx}u(t,\eta)=D^{\perp}_{dx}u\,(t,\eta) +
D^{\delta_{0} }u\,(t,\eta) \delta_{0}(dx),
\end{equation}
where
\begin{equation}		\label{eq Dperpu}
D^{\perp}_{dx}u(t,\eta)=
\mathbb{E}\left[ D_{dx-T+t}H\big(Y_{T}^{t,\eta} \big)\right]\1_{[-t,0[}(x)
% =
% \left\{
% \begin{array}{ll}
% 0													&	x\in [-T,-t[\\
% \mathbb{E}\left[ D_{dx-T+t}H\big(Y_{T}^{t,\eta} \big) \right]			&	x\in\; [-t,0[ \ .
% \end{array}
% \right.
\end{equation}
and
\begin{equation}	\label{eq D0u}
D^{\delta_{0} }u\,(t,\eta)=
\mathbb{E}\left[  \int_{[t-T,0]}D_{d\rho}H\big( Y_{T}^{t,\eta}\big)\right].
%= \mathbb{E}\left[  \int_{t-T}^{0}D_{\rho}H\big( Y_{T}^{t,\eta}\big)d\rho \right]
\end{equation}
%
%
%Now we justify \eqref{eq Du}, \eqref{eq D0u} and \eqref{eq Dperpu}.\\
%Then $D_{dx}u\apt t,\eta \cpt= D^{\perp}_{dx}u\,(t,\eta)\, +D^{\delta_{0} }u\,(t,\eta) \;  \delta_{0}(dx) $ as in \eqref{eq Du}.
Indeed  the first addend
 $D^{\perp}_{dx}u\,(t,\eta)$ of  \eqref{eq Du}, i.e. expression \eqref{eq Dperpu}
comes from \eqref{E422bis}, using
 the fact that $\delta_{\rho+T-t}(dx) =\delta_{dx-T+t}(d\rho)$ and integrating with respect to $\rho$.
%so we get
%expression \eqref{eq Dperpu}.
% for $D^{\perp}_{dx}u\,(t,\eta)$.
%\be
%D^{\perp}_{x}u\,(t,\eta)dx=
%\mathbb{E}\left[D_{x-T+t} H \apt Y^{t,\eta}_T \cpt \right]\1_{[-t,0]}(x)
%\ee
%which equals
%It is trivial to verify that the second addend $D^{\delta_{0} }u\,(t,\eta) $ equals \eqref{eq D0u}.\\
The continuity of $(t,\eta) \mapsto D_{dx}u(t,\eta)$ in
\eqref{eq Du} can be justified since the functions
% $[0,T]\times C([-T,0])\rightarrow \R$ defined by
$[0,T]\times C([-T,0])\rightarrow \R$,
 $(t,\eta)\mapsto D^{\delta_{0} }u\,(t,\eta)$
 and
function $[0,T]\times C([-T,0])\rightarrow\mathcal{M}([-T,0])$
 defined by $(t,\eta)\mapsto D^{\perp} u(t,\eta)$ are both continuous.
 The latter fact follows from the fact that $H\in C^{1}(C([-T,0]))$, $DH$ with polynomial growth, \eqref{eq MAGG0}, \eqref{eq MAGG1}, the fact that for any given Brownian motion
$\bar{W}$, $\sup_{x\leq T}\vert \bar{W_{x}}\vert$ has all moments and finally the Lebesgue dominated convergence theorem.

$\bullet$ \textbf{Second order Fr\'echet derivative.}\\
We discuss the second derivative
$$D^{2}u\,:[0,T]\times C([-T,0])\longrightarrow (C([-T,0])\hat{\otimes}_{\pi}C([-T,0]))^{\ast}\cong \mathcal{B}(C([-T,0]),C([-T,0])).$$
For every fixed $(t,\eta)$ we get
%%% FRANCESCO LASCIARE QUESTO
\begin{equation}		\label{eq D2u1}
\begin{split}
D^{2}_{dx, dy}u(t,\eta)&= \mathbb{E}\Big[ D^2_{dy-T+t,dx-T+t}H\big(Y_{T}^{t,\eta} \big) \1_{[-t,0[}(x)\otimes \1_{[-t,0[}(y)\Big]  \, +\\
&
+\mathbb{E}\left[ D_{dx-T+t} \langle D H\big(  Y_{T}^{t,\eta} \big), \1_{[t-T,0]}\rangle \right]\1_{[-t,0[}(x)\,\delta_{0}(dy)+\\
&
+\mathbb{E}\left[  D_{dy-T+t} \langle D H\big(  Y_{T}^{t,\eta} \big), \1_{[t-T,0]}\rangle  \right]\1_{[-t,0[}(y) \,\delta_{0}(dx)+\\
&
+\mathbb{E}\left[ \langle D^{2}H\big(  Y_{T}^{t,\eta} \big) , \1_{[t-T,0]}\otimes  \1_{[t-T,0]} \rangle  \right]\delta_{0}(dx)\,\delta_{0}(dy)  \; .
\end{split}
\end{equation}
It is possible to show that all the terms in the first and the second derivative are well defined and continuous using similar techniques used in the first part of the proof.
We omit these technicalities for simplicity.
\begin{rem}
For illustration, if $D^2H$ is an absolutely continuous Borel measure
on $[-T,0]^2$ with density
 $D^2_{x,y}H = D_x D_y H$, we obtain the following
\begin{equation}		\label{eq D2u}
\begin{split}
D^{2}_{dx, dy}u(t,\eta)&= \mathbb{E}\Big[ D_{y-T+t} D_{x-T+t}H\big(Y_{T}^{t,\eta} \big) \Big]  \,
\1_{[-t,0[}(x) \, \1_{[-t,0[}(y)dx\,dy+\\
&
+\mathbb{E}\left[ \int_{t-T}^{0}  D_{s}D_{x-T+t} H\big(  Y_{T}^{t,\eta} \big)ds  \right]\1_{[-t,0[}(x) dx\,\delta_{0}(dy)+\\
&
+\mathbb{E}\left[ \int_{t-T}^{0}  D_{y-T+t} D_{s} H\big(  Y_{T}^{t,\eta} \big)ds  \right]\1_{[-t,0[}(y) dy\,\delta_{0}(dx)+\\
&
+\mathbb{E}\left[ \int_{[t-T,0]^{2}} D_{s_{1}}D_{s_{2}} H\big(  Y_{T}^{t,\eta} \big) ds_{1}\,ds_{2} \right]\delta_{0}(dx)\,\delta_{0}(dy)  \; .
\end{split}
\end{equation}
%Let $ \mathcal{D}_0$ denote the Hilbert space of Dirac measures concentrated at zero and let $H$ be the Hilbert space defined as $H:=\mathcal{D}_0 \oplus L^{2}([-T,0])$.
%We remark that in this case $D^{2}u\,(t,\eta)$ belongs to the Hilbert space $H\hat{\otimes}_{h}^{2}$ where $\hat{\otimes}_{h}^{2}$ denotes the tensor product of the Hilbert space $H$
%equipped with the Hilbertian tensor topology.
\end{rem}

\emph{Proof of 2)}
\begin{rem}		\label{remFBG}
Under hypothesis 2)
 we  remark the following.
\begin{enumerate}
\item The right-hand side of \eqref{eq Dperpu} is absolutely continuous
in $x$.
In other words $D^\perp_{dx} u(t,\eta)=D^{ac}_{x}u\,(t,\eta) dx$
and
{\footnotesize{
\begin{equation}		\label{eq Dacu}
D^{ac}_{x}u(t,\eta)
=\mathbb{E}\left[ D_{x-T+t}H\big(Y_{T}^{t,\eta} \big)\right]\1_{[-t,0[}(x)=
\left\{
\begin{array}{ll}
0													&	x\in [-T,-t[\\
\mathbb{E}\left[ D_{x-T+t}H\big(Y_{T}^{t,\eta} \big) \right]			&	x\in\; [-t,0[ \ .
\end{array}
\right.
%&
%=^{\textrm{ PENSO SOLO CONTRO una $h(x)=0$ per $x\in [-T,-t]$ come $X_{t}(\cdot)$ se $X_{0}=0$} }
%\mathbb{E}\left[ D_{x-T+t}H\big(Y_{T}^{t,\eta} \big)\right]\1_{[-T,0]}(x) dx\\
\end{equation}
}}
\item In particular by item ii), $x \mapsto D_x H(\eta)$ belongs to $H^1$, so
 it has bounded variation. Therefore the deterministic forward integral in
 \eqref{kolmogorov} exists because of Proposition \ref{prop A}
 and it can be expressed through \eqref{eq BACDI2}.
% $D^\perp_{dx} u(t,\eta)=D^{ac}_{x}u\,(t,\eta) dx$.\\
We will denote by $D'H(\eta)$ the derivative in $x$ of function
$x\mapsto D_{x}H(\eta)$, where $D_x H(\eta)$ is the density of the measure $D_{dx}H(\eta)$  for every fixed $\eta$.
Since $x\mapsto D_{x}H(\eta)$
 is absolutely continuous
then, by \eqref{DetIAc} we have
\begin{equation} \label{E111}
 \int_{]-t,0]}D_{dx-T+t}H\left( Y_{T}^{t,\eta}\right)d^- \eta(x) =
\int_{]-t,0]}D_{x-T+t}H\left( Y_{T}^{t,\eta}\right)d^- \eta(x).
\end{equation}
Previous deterministic integral exists because
$x \mapsto D_x H(\eta)$ has bounded variation
and by Proposition \ref{prop A}
it equals
$$
-  D_{-T}H\left(Y_{T}^{t,\eta}\right)\eta(-t)
+ D_{t-T}H\left( Y_{T}^{t,\eta} \right) \eta(0)
-
 \int_{-t}^{0}D'_{x-T+t} H\left( Y_{T}^{t,\eta} \right)\eta(x)dx.
$$
\end{enumerate}
\end{rem}
$\bullet$ \textbf{ Derivability with respect to time $t$.} \\
Let $t \in [0,T], \eta \in C([-T,0)]$.
We will show that
\begin{equation} \label{EQuasiFinal}
\partial_t u(t,\eta)=
- \mathbb{E}\left[\int_{]-t,0]}D_{x-T+t} H\left( Y_{T}^{t,\eta} \right) d^- \eta(x)
%- \int_{]-t,0]}D^{\perp}_{dx} u\,(t,\eta)\,d^- \eta(x)
 + \frac{\sigma^2}{2}\langle D^2H\left( Y_{T}^{t,\eta} \right)
 \ , \ 1_{]t-T,0]}\otimes^2 \rangle  \right] .
 \end{equation}
%\quad \mbox{ and in particular }\quad
We need to consider
$\epsilon$ such that $t+\epsilon
\in [0,T]$ and evaluate the limit, if it exists, of
%In order to express $\partial_{t}u$,
% let $\epsilon > 0$ and express the quantity
\begin{equation}		\label{eq approx partial t}
\frac{u(t+\epsilon,\eta)-u(t,\eta) }{\epsilon},
\end{equation}
when $\epsilon \rightarrow 0$. Without restriction
of  generality we will suppose  here $\epsilon > 0$; the case $\epsilon < 0$ would bring  similar
calculations.

 The flow property \eqref{eq: flow property}
gives $Y_{T}^{t,\eta}=Y_{T}^{t+\epsilon, Y_{t+\epsilon}^{t,\eta}}$,
which allows to write
\begin{equation}		\label{eq AQ}
u(t,\eta)=\mathbb{E}\left[   H\big( Y_{T}^{t+\epsilon,Y_{t+\epsilon}^{t,\eta}}\big)   \right]	\; .
\end{equation}
We go on with the evaluation of the  limit  of \eqref{eq approx partial t}.
%derivative with respect to time $\partial_{t}u$.
By \eqref{eq AQ} and by differentiability of $H$ in $C([-T,0])$ we have
{\footnotesize{
\begin{equation}		\label{eq AQW}
\begin{split}
H\big(Y_{T}^{t+\epsilon,\eta} \big)& -H\big( Y_{T}^{t+\epsilon,Y_{t+\epsilon}^{t,\eta}}\big)
=
\langle DH\left(Y_{T}^{t,\eta} \right),Y_{T}^{t+\epsilon,\eta}-Y_{T}^{t+\epsilon,Y_{t+\epsilon}^{t,\eta}}\rangle +
\\
&
+
\int_{0}^{1}\langle DH\left( \alpha Y_{T}^{t+\epsilon,\eta}+(1-\alpha) Y_{T}^{t+\epsilon,Y_{t+\epsilon}^{t,\eta } }  \right)-DH\left(Y_{T}^{t,\eta} \right),
Y_{T}^{t+\epsilon,\eta}-Y_{T}^{t+\epsilon,Y_{t+\epsilon}^{t,\eta}}
\rangle d\alpha
=\\
%
%&
%=
%\int_{0}^{1}
%\left[
%\int_{-T}^{0} D_{x}H\left( \alpha Y_{T}^{t+\epsilon,\eta}+(1-\alpha) Y_{T}^{t+\epsilon,Y_{t+\epsilon}^{t,\eta } }  \right)
%\left(  Y_{T}^{t+\epsilon,\eta}(x) -Y_{T}^{t+\epsilon,Y_{t+\epsilon}^{t,\eta}}(x)  \right)dx\right]
%d\alpha=\\
&
=\int_{[-T,0]} D_{dx}H\left( Y_{T}^{t,\eta} \right)
\left(  Y_{T}^{t+\epsilon,\eta}(x) -Y_{T}^{t+\epsilon,Y_{t+\epsilon}^{t,\eta}}(x)  \right)+S(\epsilon,t,\eta)\ ,
\\
\end{split}
\end{equation}
}}
where
{\footnotesize{
\[
S(\epsilon,t,\eta)=
\int_{0}^{1}\langle DH\left( \alpha Y_{T}^{t+\epsilon,\eta}+(1-\alpha) Y_{T}^{t+\epsilon,Y_{t+\epsilon}^{t,\eta } }  \right)-DH\left(Y_{T}^{t,\eta} \right),
Y_{T}^{t+\epsilon,\eta}-Y_{T}^{t+\epsilon,Y_{t+\epsilon}^{t,\eta}}
\rangle d\alpha			\; .
%
%
%\int_{0}^{1}
%\left[
%\int_{-T}^{0} D_{x}H\left( \alpha Y_{T}^{t+\epsilon,\eta}+(1-\alpha) Y_{T}^{t+\epsilon,Y_{t+\epsilon}^{t,\eta } }  \right)-D_{x}H\left(Y_{T}^{t,\eta} \right)
%\left(  Y_{T}^{t+\epsilon,\eta}(x) -Y_{T}^{t+\epsilon,Y_{t+\epsilon}^{t,\eta}}(x)  \right)dx\right]
%d\alpha
%
\]
}}
Setting $\gamma=Y_{t+\epsilon}^{t,\eta}$, we need to evaluate
\begin{equation}	\label{eq EE}
Y_{T}^{t+\epsilon,\eta}(x)-Y_{T}^{t+\epsilon,\gamma}(x)
\quad\quad x\in [-T,0]			\; .
\end{equation}
\eqref{eq EE} gives
{\footnotesize{
\begin{equation}		\label{eq EEE}
Y_{T}^{t+\epsilon,\eta}(x)-Y_{T}^{t+\epsilon,\gamma}(x) =
\left\{
\begin{array}{ll}
\eta(x+T-t-\epsilon)-\gamma(x+T-t-\epsilon)	&	
x\in [-T,t-T+\epsilon[\\
\eta(0)-\gamma(0) =-\sigma( W_{t+\epsilon}(0)+W_{t}) 		&				x\in [t-T+\epsilon,0] \ ,
\end{array}
\right.
\end{equation}
}}
because $\gamma(0)=Y_{t+\epsilon}^{t,\eta}(0)=\eta(0)+
\sigma (W_{t+\epsilon}(0)-W_{t}) $.
Moreover, by \eqref{eq flow}, we have
{\footnotesize{
\[
\gamma(x+T-t-\epsilon)=Y_{t+\epsilon}^{t,\eta}(x+T-t-\epsilon)=
\left\{
\begin{array}{ll}
\eta(x+T-t)					&			x\in[-T,t-T[	  \\
\eta(0)+\sigma(W_{T}(x)-W_{t})			&			x\in [t-T,t-T+\epsilon]   .
\end{array}
\right.
\]
}}
Finally we obtain an explicit expression for \eqref{eq EE}; indeed \eqref{eq EEE} gives
{\footnotesize{
\begin{equation}				\label{eq EAE}
Y_{T}^{t+\epsilon,\eta}(x)-Y_{T}^{t+\epsilon,\gamma}(x) =
\left\{
\begin{array}{ll}
\eta(x+T-t-\epsilon)-\eta(x+T-t)		&	x\in [-T,t-T[\\
\eta(x+T-t-\epsilon)-\eta(0)-\sigma(W_{T}(x)+W_{t}) &	x\in [t-T, t-T+\epsilon[\\
\sigma(W_{t}-W_{t+\epsilon})				&	x\in [t-T+\epsilon,0] \ .
\end{array}
\right.
\end{equation}
}}
Consequently, using \eqref{eq AQ}, \eqref{eq AQW} and \eqref{eq EAE}, the quotient \eqref{eq approx partial t} appears to be the sum of four terms.
\begin{equation}		\label{eq ZA}
\begin{split}
\frac{u(t+\epsilon,\eta)-u(t,\eta) }{\epsilon}&=
\mathbb{E}\left[ \frac{ H\big( Y_{T}^{t+\epsilon,\eta} \big) -  H\big( Y_{T}^{t+\epsilon,Y_{t+\epsilon}^{t,\eta}}\big) }{\epsilon}\right]
= \\
&=
I_{1}(\epsilon,t, \eta)+I_{2}(\epsilon,t, \eta)+I_{3}(\epsilon,t, \eta)+\frac{1}{\epsilon}\mathbb{E}\left[ S(\epsilon,t, \eta)\right],
\end{split}
%\frac{1}{\epsilon}
%\mathbb{E}
%\left[
%\int_{-T}^{0} D_{x}H\left(Y_{T}^{t,\eta} \right)
%\left(  Y_{T}^{t+\epsilon,\eta}(x) -Y_{T}^{t+\epsilon,Y_{t+\epsilon}^{t,\eta}}(x)  \right)dx\right]+\mathbb{E}\left[ R(\epsilon)\right]
\end{equation}
where
\[
\begin{split}
I_{1}(\epsilon,t, \eta)&=\mathbb{E}\left[ \int_{-T}^{t-T} D_{x}H\left( Y_{T}^{t,\eta}\right)\frac{\eta(x+T-t-\epsilon)-\eta(x+T-t) }{\epsilon}  dx \right]=
\\
&
=-\mathbb{E}\left[ \int_{-t}^{0} D_{x-T+t }H\left( Y_{T}^{t,\eta}\right) \frac{\eta(x)-\eta(x-\epsilon) }{\epsilon}  dx \right]\\
I_{2}(\epsilon,t, \eta)&=\mathbb{E}\left[ \int_{t-T}^{t-T+\epsilon}D_{x}H\left( Y_{T}^{t,\eta}\right) \frac{\eta(x+T-t-\epsilon)-\eta(0)-
\sigma(W_{T}(x)+W_{t})  }{\epsilon} dx \right] \\
& -\mathbb{E}\left[ \int_{t-T}^{t-T+\epsilon}D_{x}H\left( Y_{T}^{t,\eta}\right)   \frac{W_{t}-W_{t+\epsilon}}{\epsilon} dx \right]\\
&= \mathbb{E}
\left[
\int_{t-T}^{t-T+\epsilon}
D_{x}H\left( Y_{T}^{t,\eta}\right)\frac{\eta(x+T-t-\epsilon)-\eta(0)-
\sigma(W_{T}(x)+W_{t+\epsilon})}{\epsilon}
dx \right]
\end{split}
\]
\[
\begin{split}
I_{3}(\epsilon,t, \eta)&=\mathbb{E}\left[ \int_{t-T}^{0}D_{x}H\left( Y_{T}^{t,\eta}\right)   \frac{\sigma(W_{t}-W_{t+\epsilon})}{\epsilon}  dx \right] \\
\end{split}
\]
and $\mathbb{E}\left[ S(\epsilon,t, \eta)\right]$ is equal to
\begin{equation}		\label{eq ER}
\begin{split}
\int_{0}^{1}
\mathbb{E}
\left[
\int_{-T}^{0}\left(  D_{x}H\left( \alpha Y_{T}^{t+\epsilon,\eta}+(1-\alpha) Y_{T}^{t+\epsilon,Y_{t+\epsilon}^{t,\eta } }  \right)-D_{x}H\left(Y_{T}^{t,\eta} \right) \right) \cdot \right.\\
\hspace{5cm}
\left.
\cdot
\left(  Y_{T}^{t+\epsilon,\eta}(x) -Y_{T}^{t+\epsilon,Y_{t+\epsilon}^{t,\eta}}(x)  \right)dx\right]
d\alpha			\; .
\end{split}
\end{equation}
$\bullet$
We will prove that
\begin{equation} \label{I123}
I_{1}(\epsilon,t, \eta)   \xrightarrow[\epsilon\rightarrow 0]{}
I_{1}(t, \eta):=
I_{11}(t, \eta)+I_{12}(t, \eta)+I_{13}(t, \eta),
\end{equation}
where
\[
\begin{split}
I_{11}(t, \eta)&=
\mathbb{E}\left[ D_{-T}H\left(Y_{T}^{t,\eta}\right)\eta(-t)\right] \\
I_{12}(t, \eta)&=
\mathbb{E}\left[  \int_{-t}^{0}D'_{x-T+t} H\left( Y_{T}^{t,\eta} \right)\eta(x)dx\right] \\
I_{13}(t, \eta)&=
- \mathbb{E}\left[ D_{t-T}H\left( Y_{T}^{t,\eta} \right) \eta(0) \right]   \; .
\end{split}
\]
Admitting \eqref{I123},
%by Fubini's,
the additivity
% Proposition \ref{prop A} and
and using \eqref{eq Dacu} in Remark \ref{remFBG} we have

\begin{equation}
  I_1(t,\eta) =  -\mathbb{E}\left[ \int_{]-t,0]}D_{x-T+t} H\left( Y_{T}^{t,\eta} \right) d^- \eta(x)\right].
% \mathbb{E}\left[ D_{-T}H\left(Y_{T}^{t,\eta}\right)\eta(-t)\right] +
% \mathbb{E}\left[  \int_{-t}^{0}D'_{x-T+t} H\left( Y_{T}^{t,\eta} \right)\eta(x)dx\right] \\
% &-& \mathbb{E}\left[ D_{t-T}H\left( Y_{T}^{t,\eta} \right) \eta(0) \right] =
 % \int_{]-t,0]} D^{ac}_x u(t,\eta) d^- \eta(x) =
%- \int_{]-t,0]}D^{\perp}_{dx} u\,(t,\eta)\,d^- \eta(x).
\end{equation}
%\mathbb{E}\left[\int_{]-t,0]} D_{x-T+t} H\left( Y_{T}^{t,\eta}
%\right)d^-\eta(x)\right] =
%We recall that, by \eqref{eq BACDI2} in Proposition \ref{prop A} that
%\[
%I_{1}(t,\eta)=-\mathbb{E}\left[\int_{]-t,0]}D_{x-T+t}H\left( Y_{T}^{t,\eta}\right)d^- \eta(x)  \right] \ .
%\]
It remains to show \eqref{I123}.
In fact $I_{1}(\epsilon,t,\eta)$ can be rewritten as sum of the three terms
\[
\begin{split}
I_{11}(\epsilon,t,\eta)
&
=\mathbb{E}\left[ \int_{-t}^{-t+\epsilon} D_{x-T+t}H\left( Y_{T}^{t,\eta}\right) \frac{\eta(x-\epsilon)}{\epsilon} dx  \right]
%\xrightarrow[\epsilon\rightarrow 0]{}
%D_{T}H\left(Y_{T}^{t,\eta}\right)\eta(-t)=:I_{11}
\\
I_{12}(\epsilon,t,\eta)
&
=\mathbb{E}\left[ \int_{-t}^{0}\frac{ D_{x+\epsilon-T+t}H\left( Y_{T}^{t,\eta}\right)-D_{x-T+t}H\left( Y_{T}^{t,\eta}\right) }{\epsilon} \eta(x) dx\right]
%\xrightarrow[\epsilon\rightarrow 0]{}
% \int_{-t}^{0}D'_{x-T+t}\left( Y_{T}^{t,\eta} \right)\eta(x)dx =:I_{12}
\\
I_{13}(\epsilon,t,\eta)
&
=-\mathbb{E}\left[ \int_{0}^{\epsilon} D_{x-T+t }H\left( Y_{T}^{t,\eta}\right) \frac{\eta(x-\epsilon)}{\epsilon}  dx \right]			\; .
%\xrightarrow[\epsilon\rightarrow 0]{}
%- D_{t-T}H\left( Y_{T}^{t,\eta} \right) \eta(0)=:I_{13}
\\
\end{split}
\]
%Since $x\mapsto D_{x}H(\eta)$ is a continuous function, % with respect to $x$
We can apply the
% the finite increments theorem and
 dominated convergence theorem.
Since  $\bar{W}$, $\sup_{x\leq T}\vert \bar{W_{x}}\vert$ has all moments and
taking into account \eqref{eq HS1} in Remark \ref{rm PMLOP},
 we get that $I_{1i}(\epsilon,t,\eta)\xrightarrow
 [\epsilon\rightarrow 0]{} I_{1i}(t,\eta)$ for $i=1,2,3$ holds.\\
%By Remark \ref{remFBG} and \eqref{eq Dacu} we get
%In particular, using \eqref{eq BACDI2}, we observe that $I_{1}(t,\eta)$ equals
%\[
%-\int_{]-t,0]}D^{\perp}_{dx} u\,(t,\eta)\,d^- \eta(x)
%\quad \mbox{ and in particular }\quad
%= -\int_{]-t,0]}D^{ac}_x u\,(t,\eta)\,d^- \eta(x) =  I_{1}(t,\eta).
%\]
%since $x \mapsto D_x H$ is absolutely continuous.\\
%
$\bullet$
 $ I_{2}(\epsilon,t,\eta) $ converges to zero when $\epsilon\rightarrow 0$.
Indeed, Cauchy-Schwarz inequality yields
\begin{equation}		\label{eq F1234}
\begin{split}
\vert I_{2}(\epsilon,t,\eta) \vert^{2}&\leq
\frac{1}{\epsilon}\mathbb{E}\left[
\int_{t-T}^{t-T+\epsilon}
D_{x}H\left( Y_{T}^{t,\eta}\right)^{2}dx
\right] \cdot \\
&\hspace{1.2cm}\cdot
\frac{1}{\epsilon}\mathbb{E}\left[
\int_{t-T}^{t-T+\epsilon}
\left( \eta(x+T-t-\epsilon)-\eta(0)-\sigma(W_{T}(x)+W_{t+\epsilon})\right)^{2}dx
\right]		.
\end{split}
\end{equation}
Again, by usual arguments and
again because
%  $\bar{W}$,
 $\sup_{x\leq T}\vert \bar{W_{x}}\vert$ has all moments and
taking into account \eqref{eq HS1} in Remark \ref{rm PMLOP},
%We recall that given any Brownian motion $\bar{W}$, $\sup_{x\leq T}\vert \bar{W_{x}}\vert$ has all moments;
%Using \eqref{eq HS1}, Lebesgue dominated convergence theorem and finite increments theorem,
it
follows that the first integral converges to $\mathbb{E}\left[D_{t-T}H\left( Y_{T}^{t,\eta}\right)^{2} \right]$ and the second integral to zero.\\
%
%Secondly we prove that $ I_{2}(\epsilon,t,\eta) \xrightarrow[\epsilon\rightarrow 0]{} 0$.
%In fact, by using mean theorem, there exists $\bar{x}=\bar{x}(\epsilon)\in [t-T,t-T+\epsilon]$ such that
%\[
%\begin{split}
%I_{2}(\epsilon,t,\eta)&=
%\int_{t-T}^{t-T+\epsilon}
%\mathbb{E}
%\left[
%D_{x}H\left( Y_{T}^{t,\eta}\right)\frac{\eta(x+T-t-\epsilon)-\eta(0)-W_{T}(x)+W_{t+\epsilon}}{\epsilon}
%\right]
%dx %=\mathbb{E}\left[ A(\bar{x},\epsilon)\right]
%\\
%&
%=
%\mathbb{E}
%\left[
%D_{\bar{x}}H\left( Y_{T}^{t,\eta}\right) \left( \eta(\bar{x}+T-t-\epsilon)-\eta(0)-W_{T}(\bar{x})+W_{t+\epsilon}\right)
%\right]
%\end{split}
%\]
%We have that when $\epsilon$ goes to zero then $\bar{x}$ goes to $(t-T)$ and by continuity of flow \eqref{eq flow}, in particular in $x=(t-T)$, it follows that
%\[
%D_{\bar{x}}H\left( Y_{T}^{t,\eta}\right) \eta(\bar{x}+T-t-\epsilon)-\eta(0)-W_{T}(\bar{x})+W_{t+\epsilon}
%\xrightarrow[\epsilon\rightarrow 0]{}
%D_{t-T}H\left( Y_{T}^{t,\eta}\right) \left( \eta(0)-\eta(0)-W_{T}(t-T)+W_{t}\right)=0\; a.s.
%\]
%Again by using dominated convergence theorem we conclude that $I_{2}(\epsilon,t,\eta)$ converges to zero.\\
$\bullet$ As third step we prove that
\be		\label{TRE2}
I_{3}(\epsilon,t,\eta)  \xrightarrow
[\epsilon\rightarrow 0]{}
- \sigma^2 \mathbb{E}\left[ \langle D^2 H\big(  Y_{T}^{t,\eta} \big) , \1_{]t-T,0]}\otimes^2 \rangle \right]=:I_{3}(t,\eta)  \; .
\ee
For this, we rewrite $I_{3}(\epsilon,t,\eta)$ using \eqref{Wbar}, i.e. $W_{t+\epsilon}-W_t=\overline W_{\epsilon}$ and the Skorohod integral to obtain
{\footnotesize{
\begin{equation} \label{eq GVB}
\begin{split}
I_{3}(\epsilon,t,\eta)&
=-\sigma \mathbb{E}\left[ \int_{t-T}^{0}D_{x}H\left( Y_{T}^{t,\eta}\right)
  \frac{W_{t+\epsilon}-W_{t}}{\epsilon}  dx \right] =-
\frac{\sigma}{\epsilon} \mathbb{E}\left[ \int_{t-T}^{0}D_{x}H\left( Y_{T}^{t,\eta}\right)    dx \ \cdot \ \overline W_{\epsilon} \right]\\
&
=-\frac{\sigma}{\epsilon}\mathbb{E}\left[ \int_{t-T}^{0}D_{x}H\left( Y_{T}^{t,\eta}\right)  dx \ \cdot \  \int_{0}^{\epsilon} \delta \overline W_{r} \right]
%\end{split}
%\]
%Using Skorokhod integral formulation we obtain
%\begin{equation}		\label{eq GVB}
%I_{3}(\epsilon,t,\eta)
%& = -\frac{1}{\epsilon}\mathbb{E}\left[\langle DH(Y_{T}^{t,\eta}),\1_{[t-T,0]}\rangle \cdot \int_{t}^{t+\epsilon}\delta \overline W_{s}  \right]
= -\frac{\sigma}{\epsilon}\mathbb{E}\left[\mathcal{Z}\cdot \int_{0}^{\epsilon}\delta  \overline W_{s}  \right],			
\end{split}
\end{equation}
}}
where $\mathcal{Z}:=\langle DH(Y_{T}^{t,\eta}),\1_{]t-T,0]}\rangle$.
Denoting by the deterministic function
$\mathcal{Y}:=1_{]t-T,0]}(x)$,
% we have that $\mathcal{Z}=\langle DH(Y_{T}^{t,\eta}), \mathcal{Y}\rangle $ belongs to $\mathbb{D}^{1,2}$ and,
using Proposition \ref{propP1112} with $n=1$,
it follows that
 $\mathcal{Z}=\langle DH(Y_{T}^{t,\eta}), \mathcal{Y}\rangle $ belongs to $\mathbb{D}^{1,2}$
and
\be	\label{RFT}
D^{m}_{r}\mathcal{Z}=\sigma
\langle D^2H\left(Y_{T}^{t,\eta}\right), \1_{]t-T,0]}(x)\otimes \1_{]r-T+t,0]}(y) \rangle.
\ee
% If for instance, if $D^2H$ is absolutely continuous with density
% $D^2_{x,y}H = D_x D_y H$, previous term equals
% \[
% D^{m}_{r}\mathcal{Z}=\sigma^2
% \int_{r-T+t}^{0}\int_{t-T}^{0}D_{y}D_{x}H\left(Y_{T}^{t,\eta}\right)dx\,dy=
% %% INUTILE
% %\int_{]r-T+t,0]\times ]t-T,0]}D_{y}D_{x}H \left(Y_{T}^{t,\eta}\right)dx\,dy		\; .
% \]
By integration by parts on Wiener space, expression \eqref{RFT}, Fubini's theorem with respect to $r$ and $y$,
\eqref{eq GVB} gives
{\footnotesize{
\be		\label{THJ1}
\begin{split}
I_{3}(\epsilon,t,\eta)=
-\frac{\sigma}{\epsilon}\mathbb{E}\left[ \int_{0}^{\epsilon}D^{m}_{r}\mathcal{Z}\,dr\right]
&
=-  \frac{\sigma^2}{\epsilon} \mathbb{E}\left[ \int_{0}^{\epsilon}\langle D^2 H\left( Y_{T}^{t,\eta}\right) , \1_{]r-T+t,0]}(x)\otimes \1_{]t-T,0]}(y) \rangle \;dr \right]
\\
&
=
-  \frac{\sigma^2}{\epsilon}\mathbb{E}\left[ \langle D^2 H\left( Y_{T}^{t,\eta}\right) , \int_{0}^{\epsilon}  \1_{]r-T+t,0]}(x) dr \otimes \1_{]t-T,0]}(y) \rangle \right]
\\
&
=
-  \frac{\sigma^2}{\epsilon}\mathbb{E}\left[ \langle D^2 H\left( Y_{T}^{t,\eta}\right) , \int_{t}^{t+\epsilon}  \1_{]z-T,0]}(x) dz \otimes \1_{]t-T,0]}(y) \rangle \right],
\end{split}
\ee
}}
where the latter equality comes replacing $z:=r+t$ in the integral.

Observing that
\begin{equation}	\label{eq45GTB}
\begin{split}
&\int_{t}^{t+\epsilon}  \1_{]z-T,0]}(x) dz=\int_{t}^{t+\epsilon}  \1_{[0,x+T[}(z) dz\\
&\hspace{1cm}=
\left\{ \begin{array}{ll}
\int_{t}^{t+\epsilon}  0 dz=0 & x \leq t-T  \  \Leftrightarrow \  x+T\leq t \\
\int_{t}^{t+\epsilon}    \1_{[0,x+T[}(z) dz=x-t & x \in ]t-T, t-T+\epsilon] \   \Leftrightarrow \  x+T \in ]t, t+\epsilon]   \\
\int_{t}^{t+\epsilon}  1 dz=\epsilon & x \in ]t-T+\epsilon,0] \  \Leftrightarrow \ x+T\in ] t+\epsilon , T],
\end{array}\right.
\end{split}
\end{equation}
we get
\[
\frac{1}{\epsilon} \int_{t}^{t+\epsilon}  \1_{]z-T,0]}(x) dz = \1_{]t-T+\epsilon,0]}(x)+\frac{(x-t)}{\epsilon}\1_{]t-T, t-T+\epsilon]}(x) \ .
\]
Previous expression is bounded by $1$. Moreover it converges pointwise
 to $\1_{]t-T,0]}(x)$ as $\epsilon \downarrow 0$. By Remark  \ref{rm PMLOP} item 1., the fact that $D^2H$ has polynomial growth and that for any given Brownian motion
$\bar{W}$, $\sup_{x\leq T}\vert \bar{W_{x}}\vert$ has all moments and finally the
Lebesgue dominated convergence theorem we conclude that \eqref{THJ1}
 converges to $I_{3}(t,\eta)$, i.e.
\[
I_{3}(t,\eta)
= - \sigma^2 \mathbb{E}\left[ \langle D^2H\left( Y_{T}^{t,\eta}\right), \1_{]t-T,0]}(x)\otimes\1_{]t-T,0]}(y) \rangle \right].
% =-\mathbb{E}\left[ \int_{t-T}^{0}\int_{t-T}^{0} D_{y}D_{x} H\left( Y_{T}^{t,\eta}\right) dy dx\right] \ .  L'HO TOLTO
\]
So the convergence \eqref{TRE2} is established.\\
% We remark that, whenever $D^2H$ is absolutely continuous with density
%  $D^2_{x,y}H = D_x D_y H$, the term $I_{3}(t,\eta)$ equals
% \[
% I_{3}(t,\eta)=-\sigma^2\mathbb{E}\left[ \int_{]t-T,0]^{2}} D^2_{y,x} H\big(  Y_{T}^{t,\eta} \big) dx\,dy \right]
% =-\mathbb{E}\left[ \int_{t-T}^{0} \int_{t-T}^{0} D_{y}D_{x} H\left( Y_{T}^{t,\eta}\right) dx dy\right] \; .
% \
%]
$\bullet$ We study now the term $\frac{1}{\epsilon} \mathbb{E}\left[ S(\epsilon,t, \eta)\right]$ in \eqref{eq ER}.

By Lemma \ref{lemma 1}, we get the a.s. equality
$Y_{T}^{t,\eta}= Y_{T}^{t+\epsilon,Y_{t+\epsilon}^{t,\eta } } $.
Using \eqref{eq EAE}
 and the fact that $H\in C^{3}\left( C([-T,0])\right)$,
\eqref{eq ER} can be rewritten as the sum of the terms
\[
\begin{split}
A_{1}(\epsilon,t,\eta)
&
= \int_{0}^{1}
\mathbb{E}
\left[
\int_{-T}^{t-T}\left(  D_{x}H\left( \alpha Y_{T}^{t+\epsilon,\eta}+(1-\alpha) Y_{T}^{t,\eta} \right)-D_{x}H\left(Y_{T}^{t,\eta} \right) \right) \right. \cdot\\
&\hspace{4cm}
\left. \cdot
\frac{ \eta(x+T-t-\epsilon )-\eta(x+T-t) }{\epsilon} dx\right] d\alpha
\\
A_{2}(\epsilon,t,\eta)
&
=
\int_{0}^{1}
\mathbb{E}
\left[
\int_{t-T}^{t-T+\epsilon}\left(  D_{x}H\left( \alpha Y_{T}^{t+\epsilon,\eta}+(1-\alpha) Y_{T}^{t,\eta}  \right)-D_{x}H\left(Y_{T}^{t,\eta} \right) \right) \right. \cdot
\\
%%% CRI HO DEI DUBBI CON 1/2 PER A_32
&
\hspace{2.5cm}
\left. \cdot
\frac{\eta(x+T-t-\epsilon)-\eta(0)- \sigma W_{T}(x)+\sigma
W_{t+\epsilon}}{\epsilon} dx\right]d\alpha
\\
A_{3}(\epsilon,t,\eta)&
= A_{31}(\epsilon,t,\eta)+A_{32}(\epsilon,t,\eta)+A_{33}(\epsilon,t,\eta)+A_{34}(\epsilon,t,\eta) \; ,
%\int_{0}^{1}
%\mathbb{E}
%\left[
%\int_{t-T}^{0}\left(  D_{x}H\left( \alpha Y_{T}^{t+\epsilon,\eta}+(1-\alpha) Y_{T}^{t+\epsilon,Y_{t+\epsilon}^{t,\eta } }  \right)-D_{x}H\left(Y_{T}^{t,\eta} \right) \right)
%\frac{\eta(W_{t}-W_{t+\epsilon}}{\epsilon} dx\right]
%d\alpha
\\
\end{split}
\]
where
{{\footnotesize{
\[
\begin{split}
A_{31}(\epsilon,t,\eta)
&=
\frac{\sigma^2}{2}
\mathbb{E}\left[ \langle D^{2}H\left( Y_{T}^{t,\eta} \right), \1_{]t-T+\epsilon,0]}\otimes\1_{]t-T+\epsilon,0]}\rangle \cdot \frac{(W_{t}-W_{t+\epsilon})^{2}}{\epsilon}\right] \, ,\\
A_{32}(\epsilon,t,\eta)
&=\sigma^2
\int_{0}^{1}
\mathbb{E}\left[
\langle \left( D^{2}H\left( \alpha Y_{T}^{t+\epsilon,\eta}+(1-\alpha) Y_{T}^{t,\eta}  \right) -D^{2}H\left( Y_{T}^{t,\eta}\right) \right),
\1_{]t-T+\epsilon,0]^{2}}\rangle\cdot \right.\\
&\left. \hspace{7cm} \cdot \frac{(W_{t}-W_{t+\epsilon})^{2}}{\epsilon}\right]d\alpha \, ,\\
A_{33}(\epsilon,t,\eta)
&
= \sigma
\int_{0}^{1}
\mathbb{E}\left[
\langle \left( D^{2}H\left( \alpha Y_{T}^{t+\epsilon,\eta}+(1-\alpha) Y_{T}^{t,\eta}  \right) -D^{2}H\left( Y_{T}^{t,\eta}\right) \right) \right. \\
&\left.
\frac{\eta(y+T-t+\epsilon)-\eta(y+T-t)}{\epsilon} \1_{]t-T+\epsilon,0]}  (x)\otimes  \1_{[-T,t-T]}(y) \rangle \cdot  (W_{t}-W_{t+\epsilon})  \right] d\alpha  \,,
\\
A_{34}(\epsilon,t, \eta)
&
= \sigma
\int_{0}^{1}
\mathbb{E}\left[
\langle \left( D^{2}H\left( \alpha Y_{T}^{t+\epsilon,\eta}+(1-\alpha) Y_{T}^{t,\eta}  \right) -D^{2}H\left( Y_{T}^{t,\eta}\right) \right), \right. \\
& \left.  \frac{\eta(y+T-t-\epsilon)-\eta(0)-
    \sigma(W_{T}(y)+W_{t+\epsilon})}{\epsilon}\1_{]t-T+\epsilon,0]}(x)\otimes \1_{]t-T,t-T+\epsilon]}(y) \rangle \cdot \right. \\
    & \left. \hspace{7cm}  (W_{t}-W_{t+\epsilon})  \right] d\alpha \, . \\
%&
%\int_{t-T}^{0}\int_{t-T}^{t-T+\epsilon}\left( D_{y}D_{x}H\left( \alpha Y_{T}^{t+\epsilon,\eta}+(1-\alpha)Y_{T}^{t,\eta}  \right) -D_{y}D_{x}H\left( Y_{T}^{t,\eta}\right) \right) \right.  \cdot
%\\
%&
%\hspace{4.5cm} \cdot\, \left.
%\frac{\eta(y+T-t-\epsilon)-\eta(0)-W_{T}(y)+W_{t+\epsilon}}{\epsilon} (W_{t}-W_{t+\epsilon}) dy\,dx
 %\right] d\alpha
\\
\end{split}
\]
}}}
% Whenever $D^2H$ is absolutely continuous with density $D^2_{x,y}H$ previous term equals
% \[
% \begin{split}
% A_{31}(\epsilon,t,\eta)
% &=
% \frac{\sigma^2}{2}
% \mathbb{E}\left[
% \int_{]t-T+\epsilon,0]^{2}}D_{y}D_{x}H\left( Y_{T}^{t,\eta}\right)dy\,dx \, \cdot\, \frac{(W_{t}-W_{t+\epsilon})^{2}}{\epsilon}
% \right] \\
% A_{32}(\epsilon,t,\eta)
% &= \sigma^2
% \int_{0}^{1}
% \mathbb{E}\left[
% \int_{]t-T+\epsilon,0]^{2}}\left( D_{y}D_{x}H\left( \alpha Y_{T}^{t+\epsilon,\eta}+(1-\alpha) Y_{T}^{t,\eta}   \right) -D_{x}D_{y}H\left( Y_{T}^{t,\eta}\right) \right) \right. dy\,dx \, \cdot
% \\
% &
% \hspace{11cm} \cdot\, \left.
% \frac{(W_{t}-W_{t+\epsilon})^{2}}{\epsilon} \right] d\alpha
% \\
% A_{33}(\epsilon,t,\eta)
% &
% = \sigma
% \int_{0}^{1}
% \mathbb{E}\left[
% \int_{t-T+\epsilon}^{0}\int_{-T}^{t-T}\left( D_{y}D_{x}H\left( \alpha Y_{T}^{t+\epsilon,\eta}+(1-\alpha) Y_{T}^{t,\eta} \right) -D_{y}D_{x}H\left( Y_{T}^{t,\eta}\right) \right) \right.  \cdot
% \\
% &
% \hspace{5.5cm} \cdot\, \left.
% \frac{\eta(y+T-t+\epsilon)-\eta(y+T-t)}{\epsilon} (W_{t}-W_{t+\epsilon}) dy\,dx
%  \right] d\alpha
% \\
% A_{34}(\epsilon,t, \eta)
% &
% =
% \int_{0}^{1}
% \sigma \mathbb{E}\left[
% \int_{t-T+\epsilon}^{0}\int_{t-T}^{t-T+\epsilon}\left( D_{y}D_{x}H\left( \alpha Y_{T}^{t+\epsilon,\eta}+(1-\alpha)Y_{T}^{t,\eta}  \right) -D_{y}D_{x}H\left( Y_{T}^{t,\eta}\right) \right) \right.  \cdot
% \\
% &
% \hspace{4.5cm} \cdot\, \left.
% \frac{\eta(y+T-t-\epsilon)-\eta(0)- \sigma(W_{T}(y)+W_{t+\epsilon})}{\epsilon} (W_{t}-W_{t+\epsilon}) dy\,dx
%  \right] d\alpha
% \\
% \end{split}
%\]
$\bullet$
Similarly to $I_{1}(\epsilon,t,\eta)$, the term $A_{1}(\epsilon,t,\eta)$
 can be decomposed into the sum of  terms given below.
{ \scriptsize{
\[
\begin{split}
A_{11}(\epsilon,t,\eta)
&
=\mathbb{E}\left[
\int_{0}^{1}\int_{-t}^{-t+\epsilon}D_{x-T+t}H\left(  \alpha Y_{T}^{t+\epsilon,\eta}+(1-\alpha) Y_{T}^{t,\eta} \right)-D_{x-T+t}H\left( Y_{T}^{t,\eta}\right)\frac{\eta(x-\epsilon)}{\epsilon}dx\; d\alpha
\right],
\\
&
\\
A_{12}(\epsilon,t,\eta)
&
=
\mathbb{E}\left[
\int_{0}^{1}\int_{-t}^{0}\frac{D_{x+\epsilon-T+t}H\left(  \alpha Y_{T}^{t+\epsilon,\eta}+(1-\alpha) Y_{T}^{t,\eta  }  \right)
-D_{x-T+t}H\left(  \alpha Y_{T}^{t+\epsilon,\eta}+(1-\alpha) Y_{T}^{t,\eta  }  \right) }{\epsilon} \cdot \right. \\
&\left. \hspace{7 cm}
\cdot \eta(x)dx\,d\alpha  \right] \\
&
-
\mathbb{E}\left[
\int_{0}^{1}\int_{-t}^{0}\frac{D_{x+\epsilon-T+t}H\left( Y_{T}^{t,\eta}\right) -D_{x-T+t}H\left( Y_{T}^{t,\eta}\right)  }{\epsilon}\eta(x)dx\,d\alpha
\right],
%=\mathbb{E}\left[
%\int_{0}^{1}\int_{-t}^{0}\frac{D_{x+\epsilon-T+t}H\left(  \alpha Y_{T}^{t+\epsilon,\eta}+(1-\alpha) Y_{T}^{t+\epsilon,Y_{t+\epsilon}^{t,\eta } }  \right)
%-D_{x-T+t}H\left(  \alpha Y_{T}^{t+\epsilon,\eta}+(1-\alpha) Y_{T}^{t+\epsilon,Y_{t+\epsilon}^{t,\eta } }  \right) }{\epsilon}\right.\\
%&
%\hspace{8cm}
%\left.
%\eta(x)dx\,d\alpha
%\right]+
%\\
%&
%-
%\mathbb{E}\left[
%\int_{0}^{1}\int_{-t}^{0}\frac{D_{x+\epsilon-T+t}H\left( Y_{T}^{t,\eta}\right) -D_{x-T+t}H\left( Y_{T}^{t,\eta}\right)  }{\epsilon}\eta(x)dx\,d\alpha
%\right]
\\
&
\\
A_{13}(\epsilon,t,\eta)
&
=-\mathbb{E}\left[
\int_{0}^{1}\int_{-\epsilon}^{0} D_{x-T+t}H\left(  \alpha Y_{T}^{t+\epsilon,\eta}+(1-\alpha) Y_{T}^{t,\eta}  \right)-D_{x-T+t}H\left( Y_{T}^{t,\eta}\right) \frac{\eta(x-\epsilon)}{\epsilon}dx\; d\alpha
\right]  \; .
\\
\end{split}
\]
}}
$\bullet$ We show now that $A_{11}(\epsilon, t, \eta)$ converges to zero.
By Cauchy-Schwarz inequality we have
{\footnotesize{
\[
\begin{split}
\left[A_{11}(\epsilon, t, \eta)\right]^{2}
&
\leq
\int_{-t}^{-t+\epsilon}\frac{\eta^{2}(x-\epsilon)}{\epsilon}dx\; \cdot
\\
&\hspace{-0.5cm}\cdot
\mathbb{E}\left[
\int_{0}^{1}\int_{-t}^{-t+\epsilon}\frac{1}{\epsilon}
\left[ D_{x-T+t}H\left(  \alpha Y_{T}^{t+\epsilon,\eta}+(1-\alpha) Y_{T}^{t,\eta}  \right)-D_{x-T+t}H\left( Y_{T}^{t,\eta}\right)\right]^{2}dx\; d\alpha
\right].
\end{split}
\]
}}
The integral $1/\epsilon \int_{-t}^{-t+\epsilon}\eta^{2}(x-\epsilon)dx$ converges to $\eta^{2}(-t)$ by the finite increments theorem.
By hypotheses \eqref{eq HS2} and \eqref{eq CONVC} we have
\begin{equation}	\label{eq 3456}
\left\|
D H\left(  \alpha Y_{T}^{t+\epsilon,\eta}+(1-\alpha) Y_{T}^{t,\eta}  \right)-D H\left( Y_{T}^{t,\eta}\right)
\right\|_{H^{1}([-T,0])}\xrightarrow [\epsilon \longrightarrow 0 ]{a.s. } 0		\; .
\end{equation}
Because of \eqref{eq 3456}, it follows that
\begin{equation}		\label{eq EC1}
\sup_{x \in [-T,0]}
\vert D_{x} H\left(  \alpha Y_{T}^{t+\epsilon,\eta}+(1-\alpha) Y_{T}^{t,\eta}  \right)-D_{x}  H\left( Y_{T}^{t,\eta}\right) \vert
\xrightarrow [\epsilon \longrightarrow 0 ]{a.s. } 0		\quad\quad \forall \; x \in [-T,0]  \; .
\end{equation}
%%% OLD
% Since $x\mapsto D_{x-T+t} H\left(  \alpha Y_{T}^{t+\epsilon,\eta}+(1-\alpha) Y_{T}^{t,\eta}  \right)-D_{x-T+t}  H\left( Y_{T}^{t,\eta}\right)$ is a
% continuous function for $x\in [-t,-t+\epsilon]$,
% the finite increments theorem and
\eqref{eq EC1} implies that
{\footnotesize{
\[
\int_{0}^{1}\int_{-t}^{-t+\epsilon}\frac{1}{\epsilon}
\left[ D_{x-T+t}H\left(  \alpha Y_{T}^{t+\epsilon,\eta}+(1-\alpha) Y_{T}^{t,\eta}  \right)-D_{x-T+t}H\left( Y_{T}^{t,\eta}\right)\right]^{2}dx\; d\alpha
\xrightarrow [\epsilon \longrightarrow 0 ]{a.s. } 0   \; .
\]
}}
Using \eqref{eq HS1}, \eqref{eq MAGG}, \eqref{eq MAGG1} and the fact that given any Brownian motion
$\bar{W}$, $\sup_{x\leq T}\vert \bar{W_{x}}\vert$ has all moments and
Lebesgue dominated convergence theorem it follows that $A_{11}(\epsilon, t, \eta)$ converges to zero.\\
$\bullet$ Using the same technique we also obtain that $A_{13}(\epsilon, t,\eta)$ converges to zero.\\
$\bullet$ We show that $A_{12}(\epsilon, t, \eta)$ converges to zero.

For every fixed continuous function $\zeta$ we can write
\[
D_{x-T+t+\epsilon}H\left(  \zeta \right) -
D_{x-T+t}H\left(  \zeta \right) =
\int_{x-T+t}^{x+\epsilon-T+t} D'_{u}H\left( \zeta \right) du   \; .
\]
It follows that $A_{12}(\epsilon, t, \eta)$ can be rewritten as
\begin{equation}		\label{eq VB}
\mathbb{E}\left[
\int_{0}^{1}\int_{-t}^{0}\frac{1}{\epsilon} \int_{x-T+t}^{x-T+t+\epsilon}
\left[ D'_{u}H\left(  \alpha Y_{T}^{t+\epsilon,\eta}+(1-\alpha) Y_{T}^{t,\eta  }  \right) -
 D'_{u}H\left(  Y_{T}^{t,\eta  }  \right)
\right] \eta(x)\,
du\,
dx\,
d\alpha  \right]  \; .
\end{equation}
Taking the absolute value and considering the fact that $|\eta(x)|\leq \|\eta\|_{\infty}$ we obtain
{\scriptsize{
\[
\left|
A_{12}(\epsilon,t,\eta)
\right|
\leq
\mathbb{E}\left[
\int_{0}^{1}\int_{-t}^{0}\frac{1}{\epsilon} \int_{x-T+t}^{x-T+t+\epsilon}
\left|
 D'_{u}H\left(  \alpha Y_{T}^{t+\epsilon,\eta}+(1-\alpha) Y_{T}^{t,\eta  }  \right) -
 D'_{u}H\left(  Y_{T}^{t,\eta  }  \right)
\right| \,
du\,
dx\,
d\alpha  \right] \|\eta\|_{\infty}  \; .
\]
}}
By Fubini's theorem it follows
{\footnotesize{
\[
\left|
A_{12}(\epsilon,t,\eta)
\right|
\leq
\mathbb{E}\left
[\int_{0}^{1}
\int_{-T}^{-T+t}
\left|
 D'_{u}H\left(  \alpha Y_{T}^{t+\epsilon,\eta}+(1-\alpha) Y_{T}^{t,\eta  }  \right) -
 D'_{u}H\left(  Y_{T}^{t,\eta  }  \right)
\right| \,
du\,
d\alpha  \right]   \| \eta\|_{\infty} \; .
\]
}}
%In the sequel all constants will be denoted by $C$ even if they are not the same.
Now using Cauchy-Schwarz inequality we have
{\footnotesize{
\[
\begin{split}
\left|
A_{12}(\epsilon,t,\eta)
\right|^{2}
&
\leq
T \,
\mathbb{E}\left
[\int_{0}^{1}
\int_{-T}^{-T+t}
\left(
 D'_{u}H\left(  \alpha Y_{T}^{t+\epsilon,\eta}+(1-\alpha) Y_{T}^{t,\eta  }  \right) -
 D'_{u}H\left(  Y_{T}^{t,\eta  }  \right)
\right)^{2} \,
du\,
d\alpha  \right]\; \| \eta\|^{2}_{\infty}
\\
&
\leq
T\,
\mathbb{E}\left
[\int_{0}^{1}
\left\|
 D' H\left(  \alpha Y_{T}^{t+\epsilon,\eta}+(1-\alpha) Y_{T}^{t,\eta  }  \right) -
 D' H\left(  Y_{T}^{t,\eta  }  \right)
\right\|_{L^{2}([-T,0])}^{2}
d\alpha  \right]	\;  \| \eta\|^{2}_{\infty}	\; .
\end{split}
\]
}}
Convergence \eqref{eq 3456} implies in particular
\[
\left\|
 D' H\left(  \alpha Y_{T}^{t+\epsilon,\eta}+(1-\alpha) Y_{T}^{t,\eta  }  \right) -
 D' H\left(  Y_{T}^{t,\eta  }  \right)
\right\|_{L^{2}([-T,0])}^{2} \xrightarrow [\epsilon \longrightarrow 0 ]{a.s. } 0			\; .
\]
Again using \eqref{eq HS1}, \eqref{eq MAGG}, \eqref{eq MAGG1} the fact that given any Brownian motion
$\bar{W}$, $\sup_{x\leq T}\vert \bar{W_{x}}\vert$ has all moments and
Lebesgue dominated convergence theorem we have that $A_{12}(\epsilon, t, \eta)$ converges to zero.\\
$\bullet$ This concludes the proof that
 $A_{1}(\epsilon, t, \eta )$ converges to zero.\\
$\bullet$ Term $A_{2}(\epsilon,t,\eta)$ also converges to zero. In fact  %in a similar way of term $I_{2}(\epsilon,t,\eta)$ in \eqref{eq F1234},
Cauchy-Schwarz implies that
{\footnotesize{
\[
\begin{split}		
\vert A_{2}(\epsilon,t,\eta) \vert^{2}\leq&
\int_{0}^{1}\frac{1}{\epsilon}
\mathbb{E}
\left[
\int_{t-T}^{t-T+\epsilon}\left(  D_{x}H\left( \alpha Y_{T}^{t+\epsilon,\eta}+(1-\alpha) Y_{T}^{t,\eta}  \right)-D_{x}H\left(Y_{T}^{t,\eta} \right) \right)^{2}dx \right] \cdot
\\
&
\hspace{0.5cm}
 \cdot \frac{1}{\epsilon}
\mathbb{E}
\left[
\int_{t-T}^{t-T+\epsilon}
\left( \eta(x+T-t-\epsilon)-\eta(0)- \sigma W_{T}(x)+ \sigma W_{t+\epsilon}\right)^{2} dx\right]d\alpha \; .
\end{split}
\]
}}
The continuity of $DH$ (see \eqref{eq HS2}), the fact that it has
polynomial growth in the sense of Remark \ref{rm PMLOP}.1.,
\eqref{eq MAGG} and Lebesgue dominated convergence theorem imply that the first expectation converges to zero.
The second expectation converges to zero by the same arguments together with the fact that $\sup_{x\leq T}\vert \bar{W_{x}}\vert$ has all moments.\\
$\bullet$ We show now that $A_{31}(\epsilon,t,\eta)$ converges to
%%% CRI CHECK
 \begin{equation}		\label{eq FormA31}
\frac{\sigma^2}{2} \mathbb{E}\left[  \langle D^2 H\left( Y_{T}^{t,\eta}\right)  , \1_{]t-T,0]^2} \rangle \right]  =:A_{31}(t,\eta)		\; .
\end{equation}
% Again we observe as particular case that, whenever $D^2H$ is absolutely continuous with density $D^2_{x,y}H = D_x D_y H $, previous term $A_{31}(t,\eta)$ equals $\frac{\sigma^2}{2} \mathbb{E}\left[  \int_{]t-T,0]^{2}}D_{y}D_{x}H\left( Y_{T}^{t,\eta}\right)  dy\, dx\right] $.\\
% The term $A_{31}(\epsilon,t,\eta)$ can be written as
% \begin{equation}		\label{eq FormA31}
% \frac{\sigma^2}{2}\mathbb{E}\left[
% \mathcal{Z}(\epsilon)\cdot\frac{(W_{t+\epsilon}-W_{t})^{2}}{\epsilon}\right],
%  \quad \mbox{where} \quad
%  \mathcal{Z}(\epsilon):=\langle D^{2}H\left( Y_{T}^{t,\eta} \right), \1_{]t-T+\epsilon,0]}\otimes\1_{]t-T+\epsilon,0]}\rangle.
% %=
% %\langle D^{2}H\left( Y_{T}^{t,\eta} \right), \1_{]t-T+\epsilon,0]^{2}}\rangle \; .
% \end{equation}
At this level we need two technical results.
\begin{lem}		\label{lem LA31}
The random variable $B(\epsilon):=\frac{(W_{t+\epsilon}-W_{t})^{2}}{\epsilon}$ weakly converges in $L^{2}(\Omega)$ to $1$ when $\epsilon\rightarrow 0$.
\end{lem}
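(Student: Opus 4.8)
The plan is to exploit two elementary facts: that $B(\epsilon)$ is uniformly bounded in $L^{2}(\Omega)$, and that its fluctuation part is a small stochastic integral. Since $W_{t+\epsilon}-W_{t}$ is centred Gaussian with variance $\epsilon$, one has $B(\epsilon)\stackrel{d}{=}N^{2}$ with $N\sim\mathcal N(0,1)$; in particular $\mathbb{E}[B(\epsilon)]=1$ and $\mathbb{E}[B(\epsilon)^{2}]=\mathbb{E}[N^{4}]=3$ for every admissible $\epsilon$, so $\sup_{\epsilon}\|B(\epsilon)\|_{L^{2}(\Omega)}=\sqrt 3<\infty$. By It\^o's formula $(W_{t+\epsilon}-W_{t})^{2}-\epsilon=2\int_{t}^{t+\epsilon}(W_{s}-W_{t})\,dW_{s}$, hence
\[
B(\epsilon)=1+R(\epsilon),\qquad R(\epsilon):=\frac{2}{\epsilon}\int_{t}^{t+\epsilon}(W_{s}-W_{t})\,dW_{s},
\]
and it suffices to show $R(\epsilon)\rightharpoonup 0$ weakly in $L^{2}(\Omega)$ as $\epsilon\to 0$. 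Note moreover $\mathbb{E}[R(\epsilon)^{2}]=\frac{4}{\epsilon^{2}}\int_{t}^{t+\epsilon}(s-t)\,ds=2$, so $\{R(\epsilon)\}_{\epsilon}$ is itself bounded in $L^{2}(\Omega)$.

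Thanks to this uniform bound, the weak convergence of $R(\epsilon)$ to $0$ needs only be tested on a total subset of $L^{2}(\Omega)$. I would use the martingale (It\^o) representation theorem: every $G\in L^{2}(\Omega,\mathcal F_{T})$ writes $G=\mathbb{E}[G]+\int_{0}^{T}\phi_{s}\,dW_{s}$ with $\phi$ progressively measurable and $\mathbb{E}\int_{0}^{T}\phi_{s}^{2}\,ds<\infty$. (Alternatively one may test only against the stochastic exponentials $\mathcal E(h)=\exp(\int_{0}^{T}h\,dW-\frac12\|h\|_{L^{2}([0,T])}^{2})$, whose linear span is dense in $L^{2}(\Omega)$, reducing everything to the elementary Gaussian identity $\mathbb{E}[(W_{t+\epsilon}-W_{t})^{2}\,\mathcal E(h)]=\epsilon+(\int_{t}^{t+\epsilon}h_{s}\,ds)^{2}$.) By the It\^o isometry the deterministic part $\mathbb{E}[G]$ contributes nothing to $\mathbb{E}[R(\epsilon)G]$ and
\[
\mathbb{E}[R(\epsilon)G]=\frac{2}{\epsilon}\,\mathbb{E}\!\left[\int_{t}^{t+\epsilon}(W_{s}-W_{t})\,\phi_{s}\,ds\right].
\]

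The last step is a Cauchy--Schwarz estimate. Using $\mathbb{E}\int_{t}^{t+\epsilon}(W_{s}-W_{t})^{2}\,ds=\epsilon^{2}/2$ one gets
\[
\bigl|\mathbb{E}[R(\epsilon)G]\bigr|
\le\frac{2}{\epsilon}\left(\mathbb{E}\!\int_{t}^{t+\epsilon}(W_{s}-W_{t})^{2}\,ds\right)^{1/2}\left(\mathbb{E}\!\int_{t}^{t+\epsilon}\phi_{s}^{2}\,ds\right)^{1/2}
=\sqrt2\left(\mathbb{E}\!\int_{t}^{t+\epsilon}\phi_{s}^{2}\,ds\right)^{1/2},
\]
and the right-hand side tends to $0$ as $\epsilon\downarrow 0$ by absolute continuity of the Lebesgue integral, since $\mathbb{E}\int_{0}^{T}\phi_{s}^{2}\,ds<\infty$. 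Consequently $\mathbb{E}[B(\epsilon)G]=\mathbb{E}[G]+\mathbb{E}[R(\epsilon)G]\to\mathbb{E}[G]$ for every $G\in L^{2}(\Omega)$, which is exactly the asserted weak convergence of $B(\epsilon)$ to $1$. I do not expect a genuine obstacle; the only point requiring a little care is the standard reduction of the weak-convergence test from all of $L^{2}(\Omega)$ to a total subset, which is legitimate precisely because of the uniform bound $\|B(\epsilon)\|_{L^{2}(\Omega)}=\sqrt3$.
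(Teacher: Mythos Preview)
Your proof is correct and follows the same overall strategy as the paper: bound $B(\epsilon)$ in $L^{2}(\Omega)$, then test convergence against random variables of the form $\mathbb{E}[G]+\int_{0}^{T}\phi_{s}\,dW_{s}$. The difference lies in how the key inner product is computed. The paper leaves $B(\epsilon)$ as it is and applies the Malliavin integration-by-parts formula $\mathbb{E}[F\,\delta(u)]=\mathbb{E}\int_{0}^{T}D^{m}_{s}F\,u_{s}\,ds$ with $F=B(\epsilon)$, obtaining $D^{m}_{s}B(\epsilon)=\tfrac{2}{\epsilon}(W_{t+\epsilon}-W_{t})\mathbb{1}_{[t,t+\epsilon]}(s)$ and hence the same intermediate expression $\tfrac{2}{\epsilon}\mathbb{E}\int_{t}^{t+\epsilon}(W_{s}-W_{t})\phi_{s}\,ds$ that you reach. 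You instead decompose $B(\epsilon)=1+R(\epsilon)$ via It\^o's formula and apply the It\^o isometry directly, which is more elementary in that it bypasses Malliavin calculus altogether; your final Cauchy--Schwarz estimate also yields a clean quantitative bound valid for every $\phi\in L^{2}([0,T]\times\Omega)$, whereas the paper restricts to bounded $\xi$ and argues by density. Both routes are short and lead to the same conclusion; yours is self-contained in classical It\^o calculus.
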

\begin{proof}
In fact, $\mathbb{E}\left[B(\epsilon)^{2}\right]=3$, so that $\left(B(\epsilon)\right)$ is bounded in
$L^{2}(\Omega)$ . Therefore there exists a subsequence $(\epsilon_{n})$ such that $\left(B(\epsilon_{n}) \right)$
converges weakly to some square integrable variable $B_{0}$. In order to show
 that $B_{0}=1$ and to conclude the proof of the lemma
it is enough to prove that
\begin{equation}
\mathbb{E}\left[ B(\epsilon) \cdot Z \right]
\longrightarrow
\mathbb{E}[Z]
\end{equation}
for any r.v. $Z$ of a dense subset $\mathcal{D}$ of $L^{2}(\Omega)$.
We choose $\mathcal{D}$ and the r.v. $Z$ such that $Z=\mathbb{E}[Z]+\int_{0}^{T}\xi_{s}dW_{s}$
where $(\xi_{s})_{s\in [0,T]}$ is a bounded previsible process.
We have
\[
\mathbb{E}\left[ B(\epsilon)\cdot Z \right]=
\mathbb{E}\left[B(\epsilon)\right]\,\mathbb{E}\left[Z\right]+
\mathbb{E}\left[
\frac{(W_{t+\epsilon}-W_{t})^{2}}{\epsilon}\int_{0}^{T}\xi_{s}dW_{s}
\right].
\]
Since $\mathbb{E}\left[ B(\epsilon) \right]\,\mathbb{E}\left[Z \right]=\mathbb{E}\left[Z \right]$, we only need to show that
\begin{equation}		\label{eq LA31}
\mathbb{E}\left[\frac{(W_{t+\epsilon}-W_{t})^{2}}{\epsilon}\int_{0}^{T}\xi_{s}dW_{s} \right]\xrightarrow[\epsilon\longrightarrow 0] {}0		\; .
\end{equation}
Since $\int_{0}^{T}\xi_{s}dW_{s} $ is a Skorohod integral, integration by parts on Wiener space \eqref{eq INTBPWiener}
implies that
the left-hand side of \eqref{eq LA31} equals
\[
\mathbb{E}\left[
\frac{2}{\epsilon}\int_{0}^{T}\xi_{s}(W_{t+\epsilon}-W_{t})\1_{[t,t+\epsilon]}(s)ds
\right]=
\mathbb{E}
\left[
\frac{1}{\epsilon}\int_{t}^{t+\epsilon}\xi_{s}ds \; (W_{t+\epsilon}-W_{t})
\right] \; ;
\]
this converges to zero since $\xi$ is bounded.
\end{proof}
%We state another result about convergence.

\begin{lem}\label{lemWS}
Let $H$ be an Hilbert space equipped with a product $\langle \cdot,\cdot\rangle$.
Let $(Z_n)_{n}$ and $(Y_n)_n$ be two sequences in $H$ such that $Z_n$ converges strongly to $Z$ and $Y_n$ converges weakly to $Y$. Then
$\langle Z_n, Y_n\rangle  $ converges to $\langle Z,Y\rangle$.
\end{lem}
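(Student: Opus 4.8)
The plan is to reduce the statement to the Cauchy--Schwarz inequality together with the fact that a weakly convergent sequence in a Hilbert space is norm bounded. First I would split, for every $n$,
\[
\langle Z_{n}, Y_{n}\rangle - \langle Z, Y\rangle
= \langle Z_{n}-Z,\, Y_{n}\rangle + \langle Z,\, Y_{n}-Y\rangle .
\]
The second summand tends to $0$ directly from the definition of weak convergence of $(Y_{n})$ to $Y$, applied to the fixed continuous linear functional $\langle Z, \cdot\rangle$ on $H$.

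For the first summand I would invoke the Banach--Steinhaus (uniform boundedness) theorem: since $(Y_{n})$ converges weakly, it is bounded, i.e. $M := \sup_{n}\|Y_{n}\| < \infty$, where $\|\cdot\|$ denotes the norm induced by the inner product. Then, by Cauchy--Schwarz,
\[
\bigl| \langle Z_{n}-Z,\, Y_{n}\rangle \bigr|
\le \|Z_{n}-Z\|\,\|Y_{n}\|
\le M\,\|Z_{n}-Z\|,
\]
and the right-hand side tends to $0$ because $Z_{n}\to Z$ strongly. Adding the two contributions yields $\langle Z_{n}, Y_{n}\rangle \to \langle Z, Y\rangle$, which is the claim.

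I do not anticipate any genuine obstacle: the argument is entirely standard and short. The only ingredient that is not completely trivial is the norm boundedness of the weakly convergent sequence $(Y_{n})$, and that is precisely the point where completeness of $H$ is used, through Banach--Steinhaus; everything else reduces to the triangle inequality and Cauchy--Schwarz. (In the application in the paper, one could alternatively dispense with Banach--Steinhaus, since the relevant weakly convergent sequence $B(\epsilon)$ from Lemma~\ref{lem LA31} is explicitly bounded in $L^2(\Omega)$, but the general statement as phrased is cleanest via the uniform boundedness principle.)
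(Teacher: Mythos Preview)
Your proof is correct and follows essentially the same route as the paper: the same decomposition $\langle Z_n, Y_n\rangle - \langle Z, Y\rangle = \langle Z_n - Z, Y_n\rangle + \langle Z, Y_n - Y\rangle$, Cauchy--Schwarz plus norm boundedness of the weakly convergent sequence for the first term, and the definition of weak convergence for the second. The only difference is that you make explicit the appeal to Banach--Steinhaus for the boundedness of $(Y_n)$, which the paper simply states as ``bounded because weakly convergent.''
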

\begin{proof}
By Cauchy-Schwarz inequality we obtain
{\footnotesize{
\[
\vert  \langle Z_n, Y_n\rangle  - \langle Z,Y\rangle \vert =\vert \langle Z_n-Z, Y_n\rangle  + \langle Z,Y_n-Y\rangle \vert\leq
\|  Z_n-Z\|_H  \, \|Y_n\|_H +\vert  \langle Z,Y_n-Y\rangle\vert
\xrightarrow[\epsilon\longrightarrow 0]{}0,
\]
}}
since $\|  Z_n-Z\|_H $ goes to zero by the strong convergence hypothesis of $(Y_n)$, $\|Y_n\|_H$ is bounded because weakly convergent and $\langle Z,Y_n-Y\rangle$ goes to zero by definition of weak convergence of $(Y_n)_n$ and the fact that $Z\in H$.
\end{proof}
In order to show the convergence of $2 A_{31}(\epsilon,t,\eta)=
\sigma^2 \mathbb{E}\left[ \mathcal{Z}(\epsilon) \cdot\frac{(W_{t+\epsilon}-W_{t})^{2}}{\epsilon} \right]$ to $2A_{31}(t,\eta)$ we use Lemma \ref{lemWS} setting the Hilbert space $H$ equal
 to $L^2(\Omega)$. We only need to show the strong convergence in $H$ of $\mathcal{Z}(\epsilon)$ to $\mathcal{Z}:=\langle D^{2}H\left( Y_{T}^{t,\eta} \right), \1_{]t-T,0]}\otimes\1_{]t-T,0]}\rangle $.
Taking into account $\1_{]t-T+\epsilon,0]}\otimes^2\rightarrow \1_{]t-T,0]}\otimes^2 $ pointwise and Lebesgue dominated convergence theorem, it is not difficult to show now that
$\mathbb{E}\left[ \left( \mathcal{Z}(\epsilon)-\mathcal{Z}\right)^2\right]$
 converges to zero, i.e. the strong convergence in $L^2(\Omega)$.
Finally by an immediate application of Lemma \ref{lem LA31}, the term $A_{31}(\epsilon,t,\eta)$ expressed in \eqref{eq FormA31} converges to $\frac{\sigma^2}{2}\mathbb{E}[\mathcal{Z}]$
which equals $A_{31}(t,\eta)$.
\\
$\bullet$ The term $A_{32}(\epsilon,t, \eta)$ converges to zero. In fact using $\1_{]t-T+\epsilon,0]^2}\leq \1_{[t-T,0]^{2}} $ and then the Cauchy-Schwarz
inequality we obtain
{\footnotesize{
\[
\begin{split}
&\mathbb{E}\left[
\langle D^{2}H \left( \alpha Y_{T}^{t+\epsilon,\eta}+(1-\alpha) Y_{T}^{t+\epsilon,Y_{t+\epsilon}^{t,\eta } }  \right) -D^{2}H\left( Y_{T}^{t,\eta}\right), \1_{]t-T+\epsilon,0]^{2}}\rangle  \;\cdot\;
\frac{(W_{t+\epsilon}-W_{t})^{2}}{\epsilon}
\right]\\
&\leq
\mathbb{E}\left[
\langle D^{2}H \left( \alpha Y_{T}^{t+\epsilon,\eta}+(1-\alpha) Y_{T}^{t+\epsilon,Y_{t+\epsilon}^{t,\eta } }  \right) -D^{2}H\left( Y_{T}^{t,\eta}\right), \1_{[t-T,0]^{2}}\rangle  \;\cdot\;
\frac{(W_{t+\epsilon}-W_{t})^{2}}{\epsilon}
\right]\\
&\leq
\sqrt{
\mathbb{E}\left[ \left| \langle D^{2}H \left( \alpha Y_{T}^{t+\epsilon,\eta}+(1-\alpha) Y_{T}^{t+\epsilon,Y_{t+\epsilon}^{t,\eta } }  \right) -D^{2}H\left( Y_{T}^{t,\eta}\right), \1_{[t-T,0]^{2}}\rangle\right|^{2} \right] } \, \cdot\,\sqrt{3}\\
&
\leq
\sqrt{
\mathbb{E}\left[ \left\| D^{2}H \left( \alpha Y_{T}^{t+\epsilon,\eta}+(1-\alpha) Y_{T}^{t+\epsilon,Y_{t+\epsilon}^{t,\eta } }  \right) -D^{2}H\left( Y_{T}^{t,\eta}\right)\right\|^2_{\left( C([-T,0])\hat{\otimes}^2_\pi\right)^*}
\cdot \left\| \1_{[t-T,0]^{2}}\right\|^{2} \right].}
\end{split}
\]
}}
%_{\left( C([-T,0])\hat{\otimes}^2_\pi\right)^* }
The latter term converges to zero because $D^{2}H\in C^{0}\left(C([-T,0])\right)$ and $D^{2}H$ has polynomial growth as
we have seen in Remark \ref{rm PMLOP} item 1.\\
$\bullet$ We show that $A_{33}(\epsilon,t,\eta)$ converges to zero.
We rewrite $A_{33}(\epsilon,t,\eta)$ as $\sigma(A_{332}(\epsilon,t,\eta)- A_{331}(\epsilon,t,\eta))$, where
{\footnotesize{
\[
\begin{split}
A_{331}(\epsilon,t,\eta)
&
=
\mathbb{E}\left[
\langle D^2H\left( Y_{T}^{t,\eta}\right),  \frac{\eta(y+T-t+\epsilon)-\eta(y+T-t)}{\epsilon}  \1_{]t-T+\epsilon,0]}(x)\otimes \1_{[-T,t-T]}(y)\rangle\cdot\right.\\
&\left. \hspace{8cm}\cdot
 (W_{t+\epsilon}-W_{t})
\right] \ ,
\\
A_{332}(\epsilon,t,\eta)
&
= \int_{0}^{1}
\mathbb{E}\left[ \langle
D^2 H\left( \alpha Y_{T}^{t+\epsilon,\eta}+(1-\alpha) Y_{T}^{t+\epsilon,Y_{t+\epsilon}^{t,\eta } }  \right),  \right.\\
&
\hspace{0.3cm}
\left. \frac{\eta(y+T-t+\epsilon)-\eta(y+T-t)}{\epsilon} \1_{]t-T+\epsilon,0]}(x)\otimes \1_{[-T,t-T]}(y)  \rangle (W_{t+\epsilon}-W_{t}) \right]d\alpha			\; .
\end{split}
\]
}}
%%% NON SO SE AIUTANO QUESTE SPIEGAZIONI
%% NEL CASO DI UNA DENSITA
% Whenever $D^2H$ is absolutely continuous with density $D^2_{x,y}H$ previous terms equals
% \[
% \begin{split}
% A_{331}(\epsilon,t,\eta)
% &
% =
% \mathbb{E}\left[
% \int_{t-T+\epsilon}^{0}\int_{-T}^{t-T} D_{y}D_{x}H\left( Y_{T}^{t,\eta}\right) \frac{\eta(y+T-t+\epsilon)-\eta(y+T-t)}{\epsilon} (W_{t+\epsilon}-W_{t}) dy\,dx
% \right]
% \\
% A_{332}(\epsilon,t,\eta)
% &
% = \int_{0}^{1}
% \mathbb{E}\left[
% \int_{t-T+\epsilon}^{0}\int_{-T}^{t-T} D_{y}D_{x}H\left( \alpha Y_{T}^{t+\epsilon,\eta}+(1-\alpha) Y_{T}^{t+\epsilon,Y_{t+\epsilon}^{t,\eta } }  \right)  \cdot \right.\\
% &
% \hspace{6cm}
% \left. \cdot \frac{\eta(y+T-t+\epsilon)-\eta(y+T-t)}{\epsilon} (W_{t+\epsilon}-W_{t}) dy\,dx
% \right]d\alpha			\; .
% \end{split}
% \]
We will show that both $A_{331}(\epsilon,t,\eta)$ and $A_{332}(\epsilon,t,\eta)$ converge to zero.
Denoting
\begin{equation}			\label{eq-def ZETA}
\mathcal{Z}:=\langle
D^{2} H\left( Y_{T}^{t,\eta}\right) \; ,\;
\mathcal{Y} \rangle,
\end{equation}
where
$$
 \mathcal{Y}:= \1_{]t-T+\epsilon,0]}(x)\otimes \1_{[-T,t-T]}(y)\left[ \eta(y+T-t+\epsilon)-\eta(y+T-t) \right] ,
$$
we rewrite
\[
A_{331}(\epsilon,t,\eta)
=\frac{1}{\epsilon}
\mathbb{E}\left[
\mathcal{Z} \cdot  (W_{t+\epsilon}-W_{t})
\right].
\]
Using Proposition \ref{propP1112},
%equality \eqref{eqDIFF}  and the fact that $\mathcal{Z}$ is Fr\'echet differentiable,
and that $H\in C^{3}(C([-T,0]))$, with polynomial growth
 we get that $\mathcal{Z}$ belongs to $\D^{1,2}$ and
 {\scriptsize{
\begin{equation}		\label{eq MLK}
\begin{split}
&D^{m}_{r}\mathcal{Z}
=\sigma \langle D^{3} H(Y^{t,\eta}_T)\ , \1_{]r-T+t,0]} \otimes \mathcal{Y} \rangle\quad +\quad \langle D^2 H(Y^{t,\eta}_T)\ ,\ D^m_r \mathcal{Y}\rangle \\
&= \sigma \langle
D^{3} H\left( Y_{T}^{t,\eta}\right)   , \1_{]r-T+t,0]}(z)\otimes
\1_{]t-T+\epsilon,0]}(x)\otimes  \1_{[-T,t-T]}(y) \left[ \eta(y+T-t+\epsilon)-\eta(y+T-t) \right]
\rangle,
\end{split}
\end{equation}
}}
because $D^m_r \mathcal{Y}$ is zero.
Using \eqref{eq-def ZETA}, Skorohod integral formulation,
 integration by parts on Wiener space \eqref{eq INTBPWiener}, \eqref{eq MLK} and successively
Fubini's theorem
with respect to the variables
$r$ and $z$ and then integrating with respect to $r$, we obtain
{\footnotesize{
\begin{equation}			\label{THG}			
\begin{split}
A_{331}(\epsilon,t,\eta)
&
=\frac{1}{\epsilon}
\mathbb{E}\left[
\mathcal{Z} \cdot  (W_{t+\epsilon}-W_{t})
\right]=\frac{1}{\epsilon}
\mathbb{E}\left[
\mathcal{Z} \cdot  \overline W_{\epsilon}
\right]=
\frac{1}{\epsilon}
\mathbb{E}\left[
\mathcal{Z} \cdot \int_{0}^{\epsilon}\delta \overline W_{u}
\right]=\\
&
\hspace{-1cm}
=
\frac{1}{\epsilon} \mathbb{E}\left[
\int_{0}^{\epsilon}D^{m}_{r} \mathcal{Z} \;dr
\right]
=\\
&\hspace{-1cm}
=
\frac{\sigma}{\epsilon}
\mathbb{E}
\left[  \int_{0}^{\epsilon} \
\langle
D^{3} H\left( Y_{T}^{t,\eta}\right) \; , \right.\\
&\hspace{-0,3cm}
\left. \;
\1_{]r-T+t,0]}(z)\otimes \1_{]t-T+\epsilon,0]}(x)\otimes \1_{[-T,t-T]}(y)\left[ \eta(y+T-t+\epsilon)-\eta(y+T-t) \right]
\rangle \ dr
\right]	\\
&\hspace{-1cm}
=
\frac{\sigma}{\epsilon}
\mathbb{E}
\left[  \
\langle
D^{3} H\left( Y_{T}^{t,\eta}\right) \; , \right.\\
&\hspace{-0,8cm}
\left. \;
 \int_{0}^{\epsilon} \1_{]r-T+t,0]}(z) \ dr \ \otimes \1_{]t-T+\epsilon,0]}(x)\otimes \1_{[-T,t-T]}(y)\left[ \eta(y+T-t+\epsilon)-\eta(y+T-t) \right]
\rangle \
\right].
\end{split}
\end{equation}
}}
Analyzing the term $ \int_{0}^{\epsilon} \1_{]r-T+t,0]}(z) \ dr$ analogously to \eqref{THJ1} and \eqref{eq45GTB} we can establish the convergence of $A_{331}(\epsilon,t,\eta)$. In fact
the third order Fr\'echet derivative of $H$, denoted by $D^{3}H$, is
a map from $C([-T,0])$ to the dual of the triple projective tensor product
of $C([-T,0])$, i.e. $\left( C([-T,0]) \hat{\otimes}_{\pi}^{3}\right)^{\ast}$. We recall that, given a general Banach space $E$ equipped
with its norm $\|\cdot\|_{E}$ and $x, y, z$ three elements of
$E$, then the norm of an elementary element of the tensor product $x\otimes y\otimes z$ which belongs to $E\otimes^{3}$ is $\|x\|_{E}\cdot \|y\|_{E}\cdot\|z\|_{E}$.
We remark that the trilinear form $(\phi,\varphi,\psi)\mapsto \langle  D^3H\left( Y^{t,\eta}_T\right),  \phi\otimes \varphi\otimes \psi\rangle $ extends from $C([-T,0])\times C([-T,0])\times C([-T,0])$ to
$\phi,\varphi,\psi:[-T,0]\longrightarrow \R$ as a Borel bounded map. Indeed the application is a measure in each component.
Consequently
\[
\begin{split}
&
\arrowvert
\langle
D^{3} H\left( Y_{T}^{t,\eta}\right) \; ,\;
\1_{]t-T+\epsilon,0]}(x)\otimes \1_{[-t,0]}(y)\left[ \eta(y+\epsilon)-\eta(y) \right]\otimes  \1_{]r-T+t,0]}(z)
\rangle
\arrowvert
\leq\\
&
\\
&
\leq
\sup_{\|\phi\|_{\infty}\leq 1,\| \varphi\|_{\infty}\leq 1,\|\psi\|_{\infty}\leq 1}
\vert
\langle D^3 H\left( Y^{t,\eta}_T\right), \phi\otimes \varphi\otimes \psi \rangle
\vert\cdot  \varpi_{\eta}(\epsilon) \\
&= \left\|
D^{3} H\left( Y_{T}^{t,\eta}\right)  \right\|_{\left( C([-T,0]) \hat{\otimes}_{\pi}^{3}\right)^{\ast}} \, \cdot \,  \varpi_{\eta}(\epsilon)\xrightarrow[\epsilon\longrightarrow 0]{a.s.}0,
\end{split}
\]
since $\varpi_{\eta}(\epsilon)$ is the modulus of continuity of $\eta$.
By the polynomial growth of $D^{3}H$, \eqref{eq MAGG1}, the fact that for any given Brownian motion
$\bar{W}$, $\sup_{x\leq T}\vert \bar{W_{x}}\vert$ has all moments and finally the
Lebesgue dominated convergence theorem we conclude that \eqref{THG} converges to zero, therefore $A_{331}(\epsilon, t, \eta)$ converges to zero.

At this point we should establish the convergence
to zero of $A_{332}(\epsilon,t,\eta)$. This can be done using, again as above, integration by parts on Wiener space \eqref{eq INTBPWiener}.
However there are several technicalities that we have to omit.
\\
$\bullet$ We show finally that $A_{34}(\epsilon,t,\eta)$ converges to zero.\\
We rewrite term $A_{34}(\epsilon,t,\eta)$ as
{\footnotesize{
\[
\begin{split}
A_{34}(\epsilon,t, \eta)&
= \sigma
\int_{0}^{1}
\mathbb{E}\left[
\langle \left( D^{2}H\left( \alpha Y_{T}^{t+\epsilon,\eta}+(1-\alpha) Y_{T}^{t,\eta}  \right) -D^{2}H\left( Y_{T}^{t,\eta}\right) \right), \right. \\
&\left.
 \frac{\eta(y+T-t-\epsilon)-\eta(0)- \sigma(W_{T}(y)+W_{t+\epsilon})}{\epsilon}\1_{]t-T+\epsilon,0]}(x)\otimes \1_{]t-T,t-T+\epsilon]}(y) \rangle \cdot  \right. \\
 &\left. \hspace{5cm} \cdot  (W_{t}-W_{t+\epsilon})  \right] d\alpha
\end{split}
\]
}}
as $A_{34}(\epsilon, t,\eta)= \sigma(A_{341}(\epsilon, t,\eta)-A_{342}(\epsilon, t,\eta))$
where
{\footnotesize{
\[
\begin{split}
A_{341}(\epsilon, t,\eta)&
=
\int_{0}^{1}
\mathbb{E}\left[
\langle D^{2}H\left( \alpha Y_{T}^{t+\epsilon,\eta}+(1-\alpha) Y_{T}^{t,\eta}   \right), \right. \\
&\left.
 \frac{\eta(y+T-t-\epsilon)-\eta(0)- \sigma(W_{T}(y)+W_{t+\epsilon})}{\epsilon}\1_{]t-T+\epsilon,0]}(x)\otimes \1_{]t-T,t-T+\epsilon]}(y) \rangle \cdot
  \right. \\
 & \left. \hspace{5cm} \cdot  (W_{t}-W_{t+\epsilon})  \right] d\alpha \ , \\
A_{342}(\epsilon, t,\eta)&
=
\int_{0}^{1}
\mathbb{E}\left[
\langle D^{2}H\left( Y_{T}^{t,\eta}\right),  \right. \\
&\left.
 \frac{\eta(y+T-t-\epsilon)-\eta(0)- \sigma(W_{T}(y)+W_{t+\epsilon})}{\epsilon}\1_{]t-T+\epsilon,0]}(x)\otimes \1_{]t-T,t-T+\epsilon]}(y) \rangle \cdot \right. \\
 & \left. \hspace{5cm} \cdot (W_{t}-W_{t+\epsilon})  \right] d\alpha=\\
 &
=\mathbb{E}\left[
\langle D^{2}H\left( Y_{T}^{t,\eta}\right),  \right. \\
&\left.
 \frac{\eta(y+T-t-\epsilon)-\eta(0)- \sigma(W_{T}(y)+W_{t+\epsilon})}{\epsilon}\1_{]t-T+\epsilon,0]}(x)\otimes \1_{]t-T,t-T+\epsilon]}(y) \rangle \cdot \right. \\
 & \left. \hspace{5cm} \cdot  (W_{t}-W_{t+\epsilon})  \right] .\\
\end{split}
\]
}}
Firstly we show that $A_{342}$ converges to zero. It holds in fact
\[
A_{342}(\epsilon, t,\eta)=\frac{1}{\epsilon}\mathbb{E}\left[  \mathcal{Z} \cdot  (W_{t}-W_{t+\epsilon})  \right]  =\frac{1}{\epsilon}\mathbb{E}\left[  \mathcal{Z} \cdot  \overline W_{\epsilon}  \right]  = \frac{1}{\epsilon}\mathbb{E}\left[  \mathcal{Z} \cdot  \int_{0}^\epsilon \delta \overline W_r \right],
\]
where
{\scriptsize{
\be
\mathcal{Z}:=\langle D^{2}H\left( Y_{T}^{t,\eta}\right),  \1_{]t-T+\epsilon,0]}(x)\otimes \left[ \eta(y+T-t-\epsilon)-\eta(0)- \sigma W_{T}(y)+
\sigma W_{t+\epsilon}\right]  \1_{]t-T,t-T+\epsilon]}(y) \rangle .
\ee
}}
Since $D^2H$ has polynomial growth and it is of class $C^1$, by Proposition  \ref{propP1112},
 $\mathcal{Z}\in \D^{1,2}$. Then
  the integration by parts on Wiener space gives
\be		\label{eqERT}
A_{342}(\epsilon, t,\eta)=\frac{1}{\epsilon}\, \mathbb{E} \left[ \int_0^\epsilon D^m_r  \mathcal{Z}\  dr \right] \ .
\ee
According to Proposition \ref{propP1112}, equation \eqref{eqDIFF} for $n=2$ and setting
\be
  \mathcal{Y}:=
\1_{]t-T+\epsilon,0]}(x)\otimes \left[ \eta(y+T-t-\epsilon)-\eta(0)-
\sigma W_{T}(y)+ \sigma W_{t+\epsilon}\right]  \1_{]t-T,t-T+\epsilon]}(y) \ ,
\ee
we get the following expression for the Malliavin derivative of $\mathcal{Z} $ in
the Wiener space associated with $(\bar W_r),$ for $r\in [0, T-t]$:
\be		\label{eqERT1}
D^m_r  \mathcal{Z} = \langle D^3H\left( Y_{T}^{t,\eta}\right), \mathcal{Y}\otimes \1_{]r-T+t, 0]}(z) \rangle + \langle D^2  H\left( Y_{T}^{t,\eta}\right), D^m_r \mathcal{Y}\rangle.
\ee
Replacing \eqref{eqERT1} in \eqref{eqERT} we get that $A_{342}(\epsilon, t,\eta)$ equals a sum of $A_{3421}(\epsilon, t,\eta)$ and $A_{3422}(\epsilon, t,\eta)$ with
\be		\label{eqERT3}
\begin{split}
A_{3421}(\epsilon, t,\eta)&=\frac{1}{\epsilon}\, \mathbb{E} \left[  \int_0^\epsilon  \langle D^3H\left( Y_{T}^{t,\eta}\right), \mathcal{Y}\otimes  \1_{]r-T+t, 0]}(z)  \rangle dr \right] \ ,\\
A_{3422}(\epsilon, t,\eta)&= \frac{1}{\epsilon}\, \mathbb{E} \left[  \int_0^\epsilon  \langle D^2H\left( Y_{T}^{t,\eta}\right),D^m_r \mathcal{Y} \rangle dr \right] \ .
\end{split}
\ee
The term $A_{3421}(\epsilon, t,\eta)$ converges to zero. In fact, similarly to the method developed in detail in \eqref{eq45GTB},
we have
\[
\begin{split}
A_{3421}(\epsilon, t,\eta)&=
\frac{1}{\epsilon}\, \mathbb{E} \left[  \int_0^\epsilon \langle D^3H\left( Y_{T}^{t,\eta}\right), \mathcal{Y}\otimes \1_{]z-T+t, 0]}(r)\rangle \  dr \right]\\
&= \frac{1}{\epsilon}\, \mathbb{E} \left[   \langle D^3H\left( Y_{T}^{t,\eta}\right), \mathcal{Y}\otimes  \int_0^\epsilon \1_{]z-T+t, 0]}(r) \  dr  \rangle \right]
\end{split}
\]
and
\[
\frac{1}{\epsilon}
 \int_0^\epsilon \1_{]z-T+t, 0]}(r)\rangle \  dr\leq \frac{ \epsilon\wedge (z+T-t)}{\epsilon} \xrightarrow [\epsilon\longrightarrow 0]{} 1  \ .
\]
Then by polynomial growth of $D^{3}H$, \eqref{eq MAGG}, the usual property that given any Brownian motion $\bar{W}$, $\sup_{x\leq T}\vert \bar{W_{x}}\vert$ has all moments, the convergence of $\mathcal{Y}$ to zero and
through the application of Lebesgue dominated convergence theorem we conclude that first term in $A_{3421}(\epsilon, t,\eta)$ converges to zero.\\
Concerning the term $A_{3422}(\epsilon, t,\eta)$ we firstly need to compute the  Malliavin derivative of $\mathcal{Y}$:
{\footnotesize{
\be 	\label{eqERT2}
\begin{split}
D^m_r\  \mathcal{Y} &=\1_{]t-T+\epsilon,0]}(x)\otimes  D^m_r \left[ \eta(y+T-t-\epsilon)-\eta(0)- \sigma W_{T}(y)+ \sigma W_{t+\epsilon}\right]  \1_{]t-T,t-T+\epsilon]}(y)\\
&= \sigma \1_{]t-T+\epsilon,0]}(x)\otimes  D^m_r \left[  W_{t+\epsilon}-
W_{T+y}\right]  \1_{]t-T,t-T+\epsilon]}(y) \\
&=\sigma \1_{]t-T+\epsilon,0]}(x)\otimes  D^m_r \left[ \overline W_{\epsilon}-\overline W_{T+y-t}\right]  \1_{]t-T,t-T+\epsilon]}(y)\\
&= \sigma \1_{]t-T+\epsilon,0]}(x)\otimes  \1_{[T+y-t,\epsilon]}(r)\cdot \1_{]t-T,t-T+\epsilon]}(y),
\end{split}
\ee
}}
since by usual property of Malliavin derivative $ D^m_r \left[ \overline W_{\epsilon}-\overline W_{T+y-t}\right]=\1_{[T+y-t,\epsilon]}(r)$.
Now replacing \eqref{eqERT2} in \eqref{eqERT3} we have, similarly to the method developed in detail in \eqref{eq45GTB},
\[
A_{3422}(\epsilon, t,\eta) = \frac{1}{\epsilon}\, \mathbb{E} \left[    \langle D^2H\left( Y_{T}^{t,\eta}\right), \int_0^\epsilon D^m_r \mathcal{Y} \ dr  \rangle \right]
\]
and
\[
\frac{\sigma}{\epsilon}
 \int_0^\epsilon D^m_r \mathcal{Y} \ dr
\leq \sigma \frac{ \epsilon\wedge (T+y-t)}{\epsilon} \xrightarrow [\epsilon\longrightarrow 0]{} 0   \ .
\]
We remark that $T+y-t\in [0,\epsilon]$ since $y\in [t-T, t-T+\epsilon]$. Again by polynomial growth of $D^{2}H$, \eqref{eq MAGG}, by
 the usual property that for the Brownian motion $\bar{W}$, $\sup_{x\leq T}\vert \bar{W_{x}}\vert$ has all moments and applying Lebesgue dominated convergence theorem, we conclude that first term in $A_{3422}(\epsilon, t,\eta)$ converges to zero. Finally $A_{342}(\epsilon, t,\eta)$ converges to zero.

By similar arguments, even though technically a little bit more involved, also $A_{341}(\epsilon, t,\eta)$ converges to zero. This finally proves that $A_{34}(\epsilon, t,\eta) \xrightarrow [\epsilon\longrightarrow 0]{} 0$.\\
$\bullet$ We are now able to express $\partial_{t} u:[0,T]\times C([-T,0])\longrightarrow \R$.
For $t \in [0,T]$, $\partial_{t}u(t,\eta)$ is given by the convergence of term \eqref{eq ZA} to a sum of three terms different form zero:
\[
\partial_{t}u(t,\eta)
= I_{1}(t,\eta)+I_{3}(t,\eta)+A_{31}(t,\eta) \ , \] i.e.
\begin{equation}		\label{eq dtu}
\partial_{t}u(t,\eta)
%&
%=
%\mathbb{E}\left[ D_{-T}H\left(Y_{T}^{t,\eta}\right)\eta(-t)\right]
%+ \mathbb{E}\left[  \int_{-t}^{0}D'_{x-T+t}H\left( Y_{T}^{t,\eta} \right)\eta(x)dx\right]
%- \mathbb{E}\left[ D_{t-T}H\left( Y_{T}^{t,\eta} \right) \eta(0) \right]+\\
%&
%-\frac{1}{2} \mathbb{E}\left[ \int_{[t-T,0]^{2}}
% D_{x}D_{y}H\left( Y_{T}^{t,\eta}\right)  dx\, dy  \right] =
%\\
=-
\mathbb{E}\left[ \int_{]-t,0]} D_{x-T+t}H\left( Y_{T}^{t,\eta} \right)d^- \eta(x) + \frac{\sigma^2}{2}
\langle D^{2}H\left( Y_{T}^{t,\eta}\right) ,\1_{]t-T,0]}\otimes^2 \rangle \right]. \\
\end{equation}
$\bullet$ \textbf{ The path-dependent heat equation.} \\
Taking into account  \eqref{E111},
% the first order Fr\'echet derivative \eqref{eq Dacu},
the second order Fr\'echet derivative \eqref{eq D2u}
%\eqref{eq BACDI2}
and the time derivative \eqref{eq dtu} it finally follows that
$u$ solves \eqref{kolmogorov}.
\end{proof}

%
%
%\begin{rem}
%It remains clear that dependence with respect to time $t$ is a {\it difficult} problem. In general we will make this assumption,
%but in most of the examples that we will see $u$ will be $C^{1,2}$ even with weaker assumption on functional $H$.
%\end{rem}
%
%CONTRO IO METTEREI L'ESEMPIO DI CALCOLO DI $u$
%

%
%\input{robustness}
%
%
%FINE VERIFICHE

%\appendix

\section{Appendix: Malliavin and Fr\'echet derivatives}

We need some technical results concerning the link between Fr\'echet and Malliavin derivatives in a separable Banach space, that for the moment ,we set to be equal to $\R$.
We need to apply Malliavin calculus related to the Brownian motion.
Let $T>0$ and $t\in [0,T]$ be fixed.
We recall that
\be		\label{Wbar}
\overline W_x :=W_{t+x}-W_t, \quad x\in [0,T-t]  \ .
\ee
So the Wiener space will be $C([0,T-t])$ with variable parameter in $[0,T-t]$ and based on $\overline W$. We consider the window Brownian element
 $\overline W_{T-t}(\cdot)$ with values in $C([-(T-t),0])$, defined as
%For next results we will consider the window Brownian process truncated in $0$, i.e.
\[
\overline W_{T-t}(x)=W_{t+T-t}(x)-W_t=W_{T+x}-W_{t}  \quad x\in [-(T-t),0].
% \quad \mbox{i.e.}\quad  \overline W_{T-t}(\cdot)  \in C([-(T-t),0]) \quad
\]
\begin{lem}		\label{lem L11}
Let $G:C([-(T-t),0])\longrightarrow \R$ of class $C^1$ such that $DG$ has polynomial growth. Let $\mathcal{Y}\in \mathbb{D}^\infty$.
Then
\be
G\left(\sigma \overline W_{T-t}(\cdot) \right)
\ee
belongs to $\mathbb{D}^{1,2}$ and
\be		\label{eqEL1}
\begin{split}
D^m_{r}\left( G( \sigma \overline W_{T-t}(\cdot)) \  \mathcal{Y}  \right)=&
 \sigma \int_{]r-(T-t),0]} \left( D_{dy} G \right) \left(\sigma \overline W_{T-t}(\cdot) \right) \mathcal{Y}  \\
 &+
G\left( \sigma \overline W_{T-t}(\cdot) \right) D^m_r \mathcal{Y} \quad \qquad \quad r\in [0, T-t]\;  a.e.
\end{split}
\ee
\end{lem}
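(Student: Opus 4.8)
The plan is to reduce the statement to two classical ingredients of Malliavin calculus: first, that a smooth-in-the-Fréchet-sense functional of a window Brownian motion is Malliavin differentiable with the derivative given by the chain rule, and second, the Leibniz rule for the Malliavin derivative of a product. The Wiener space here is $C([0,T-t])$ built on $\overline W$, and the relevant window element is $\overline W_{T-t}(\cdot)$ taking values in $C([-(T-t),0])$. So the whole statement is really the analogue, on the shifted Wiener space, of the standard fact that if $G$ is $C^1$ on the path space with polynomially growing derivative, then $G(\sigma \overline W_{T-t}(\cdot))\in\mathbb D^{1,2}$.

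First I would establish that $F:=G(\sigma \overline W_{T-t}(\cdot))$ belongs to $\mathbb D^{1,2}$ with
\[
D^m_r F = \sigma\int_{]r-(T-t),0]} \left(D_{dy}G\right)\left(\sigma \overline W_{T-t}(\cdot)\right),\qquad r\in[0,T-t]\ \text{a.e.}
\]
The idea is the usual approximation scheme: approximate $G$ by cylindrical functionals $G_m$ of finitely many increments of $\overline W$ (evaluating $\overline W_{T-t}(\cdot)$ at a partition of $[-(T-t),0]$), for which the Malliavin chain rule is elementary; the Fréchet derivative $DG$ being a measure $D_{dy}G$, the chain rule produces exactly the integral $\int D_{dy}G(\cdot)\mathds 1_{]r-(T-t),0]}(y)$ once one identifies which increments of $\overline W$ are affected by a perturbation at Malliavin-time $r$ — namely those $\overline W_{T+y-t}$ with $T+y-t\ge r$, i.e. $y\ge r-(T-t)$. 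The polynomial growth of $DG$ together with the fact that $\sup_{x\le T-t}|\overline W_x|$ has moments of all orders gives the uniform integrability needed to pass to the limit in $L^2(\Omega)$ for both $F_m\to F$ and $D^m F_m\to$ the claimed expression, so that $F\in\mathbb D^{1,2}$ with the stated derivative by closability of $D^m$. This step is essentially a transcription to the shifted window of arguments already used implicitly in the paper (e.g. in the proof of Theorem \ref{thm 2 derivate u}), and one may also simply invoke the corresponding result from \cite{nualartSEd} combined with the boundedness of the evaluation maps $\eta\mapsto\langle D_{dy}G(\eta),\cdot\rangle$.

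Then, with $F\in\mathbb D^{1,2}$ and $\mathcal Y\in\mathbb D^\infty$, the product $F\,\mathcal Y$ lies in $\mathbb D^{1,2}$ and the Leibniz rule (Proposition 1.2.3 in \cite{nualartSEd}, valid since $F\in\mathbb D^{1,2}$ and $\mathcal Y$, being in $\mathbb D^\infty$, is in $L^\infty$-closure sense as regular as needed and has all moments) gives
\[
D^m_r(F\,\mathcal Y)=\mathcal Y\,D^m_r F + F\,D^m_r\mathcal Y,
\]
which is exactly \eqref{eqEL1} after substituting the expression for $D^m_r F$ found above. The integrability required to conclude $F\mathcal Y\in\mathbb D^{1,2}$ follows from Hölder's inequality: $F$ and $D^m F$ have all moments by the polynomial growth of $G,DG$ and moments of $\sup|\overline W|$, while $\mathcal Y$ and $D^m\mathcal Y$ have all moments since $\mathcal Y\in\mathbb D^\infty$.

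The main obstacle is the first step — justifying the Fréchet chain rule for the Malliavin derivative of $G(\sigma\overline W_{T-t}(\cdot))$ when $DG$ is only a finite measure (not absolutely continuous) and $G$ has merely polynomial growth: one must set up the cylindrical approximation carefully so that the discretized Fréchet derivatives converge to $D_{dy}G$ in the right (weak-$*$ measure) sense while simultaneously controlling the $L^2(\Omega)$ convergence of the Malliavin derivatives, which is where the polynomial-growth hypothesis on $DG$ and the finiteness of all moments of $\sup_{x\le T-t}|\overline W_x|$ are used. Everything after that — the Leibniz rule and the moment bookkeeping — is routine.
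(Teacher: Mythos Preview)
Your proposal is correct and follows essentially the same approach as the paper: the paper's proof is merely a sketch, saying that the argument ``needs some boring technicalities involving the approximation of a continuous function and its polygonal approximation'' and referring to \cite{nualartSEd}, Example 1.2.1. Your plan of cylindrical approximation followed by the Leibniz rule is exactly this polygonal-approximation strategy spelled out in more detail, so there is nothing substantively different between the two.
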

\begin{proof}
The proof of this result needs some boring technicalities involving the approximation of a continuous function and its polygonal approximation. Formula \eqref{eqEL1} is stated in a particular case
for instance in \cite{nualartSEd}, Example 1.2.1.
\end{proof}
A consequence of previous lemma is the possibility of the differentiating
\be		\label{eqEL11}
h=F\left( Y^{t,\eta}_T \right), \quad F:C([-T,0])\longrightarrow \R
\ee
of class $C^1$ Fr\'echet. We remark that
\be
Y^{t,\eta}_{T}=G_{\eta}\left(\sigma \overline W_{T-t}(\cdot) \right),
\ee
where
\be
G_{\eta}:C[-(T-t),0]\longrightarrow C([-T,0])
\ee
given by
\be
G_{\eta}(\gamma)=
\left\{
\begin{array}{ll}
\eta(x+T-t) & x\in  [-T,t-T[\\
\eta(0)+\gamma(T-t+x) & x\in [t-T,0].
\end{array}
\right.
\ee
By Lemma \ref{lem L11}, if $\mathcal{Y}\in \mathbb{D}^{\infty}$,
\be		\label{eqEL111}
\begin{split}
D^m_{x}\left( h \mathcal{Y} \right)=& \sigma \int_{]x-T+t,0]}D_{dy} \left( F \circ G_{\eta} \right) \left(\sigma \overline W_{T-t}(\cdot) \right) \mathcal{Y}\\
& +
F\circ G_{\eta} \left( \sigma \overline W_{T-t}(\cdot) \right) \; D^m_x\mathcal{Y}\quad\qquad \quad \quad x\in [0,T-t].
\end{split}
\ee
\begin{rem}		\label{remR1}
We remark that $\forall \ \gamma \in C([-T+t,0])$
\[
D\left( F\circ G_\eta \right)\in \mathcal{M}([-T+t,0]).
\]
\end{rem}
We have, for $\zeta \in C([-T+t,0])$,
\be		
\int D_{dy}\left(F\circ G_\eta \right)(\gamma)\ \zeta(y)=\int_{[t-T,0]}D_{dy}F\left(  G_\eta(\gamma) \right)\ \zeta(y).
\ee
So \eqref{eqEL111} gives, for $x\in [0,T-t]$,
\be		\label{eqEL1111}
\begin{split}
D^m_x(h\mathcal{Y})&= \sigma \int_{]x-T+t,0]}\left( D_{dy} F\right) \left( G_\eta(\sigma \overline W_{T-t}) \right)\mathcal{Y}\quad +\quad
\left( F\circ G_\eta\right) \left(\sigma \overline W_{T-t} (\cdot) \right) \ D^m_x\mathcal{Y}\\
&=\sigma \int_{]x-T+t,0]}\left( D_{dy}F\right)\left(Y^{t,\eta}_T \right)\ \mathcal{Y} \quad +\quad F\left( Y^{t,\eta}_T\right) D^m_x\mathcal{Y}.
\end{split}
\ee
At this point we have proved the following.

\begin{prop}		\label{propP11}
 Let
$H:C([-T,0])\longrightarrow \R$ of class $C^1$-Fr\'echet with polynomial
growth. Let $\mathcal{Y} \in {\mathbb D}^\infty.$ Then
$  H(Y^{t,\eta}_T)  \mathcal{Y}$ belongs to ${\mathbb D}^{1,2}$ and
\be		\label{propP111}
D^m_r\left( H(Y^{t,\eta}_T) \ \mathcal{Y} \right)=\sigma \int_{]x-T+t,0]} \left(   D_{dy} H\right)(Y^{t,\eta}_T)\ \mathcal{Y} \quad +\quad F\left(Y^{t,\eta}_T \right) D_r^m \mathcal{Y} \ .
\ee
\end{prop}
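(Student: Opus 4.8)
The plan is to reduce the assertion to Lemma \ref{lem L11} via the factorization of the functional flow that is already recorded in \eqref{eqEL11}--\eqref{eqEL1111}. First I would observe that $Y^{t,\eta}_T = G_\eta(\sigma\overline W_{T-t}(\cdot))$, where $G_\eta \colon C([-(T-t),0]) \to C([-T,0])$ is the continuous affine map of \eqref{eqEL11}: it leaves the ``frozen'' part on $x\in[-T,t-T[$ equal to $\eta(x+T-t)$ and, on $x\in[t-T,0]$, adds the shifted Brownian window to the constant $\eta(0)$. Since $G_\eta$ is affine with $\|G_\eta(\gamma)\|_\infty \le \|\eta\|_\infty + \|\gamma\|_\infty$, the composition $H\circ G_\eta \colon C([-(T-t),0]) \to \mathbb{R}$ is again of class $C^1$-Fr\'echet and inherits the polynomial growth of $H$. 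Therefore Lemma \ref{lem L11}, applied with $G := H\circ G_\eta$ and the given $\mathcal{Y}\in\mathbb{D}^\infty$, already yields that $H(Y^{t,\eta}_T)\,\mathcal{Y} = G(\sigma\overline W_{T-t}(\cdot))\,\mathcal{Y}$ belongs to $\mathbb{D}^{1,2}$ together with the Malliavin-derivative formula \eqref{eqEL1}.

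The only remaining point is to rewrite the right-hand side of \eqref{eqEL1} in terms of $H$ rather than $H\circ G_\eta$. For this I would use the chain rule for Fr\'echet derivatives: since $G_\eta$ is affine, $D(H\circ G_\eta)(\gamma) = (DG_\eta)^\ast\, DH(G_\eta(\gamma))$, and because the linear part $DG_\eta$ only couples $C([-(T-t),0])$ to the coordinates $x\in[t-T,0]$ of $C([-T,0])$, the measure $D_{dy}(H\circ G_\eta)(\gamma)$ on $[-(T-t),0]$ is precisely the restriction of $D_{dy}H(G_\eta(\gamma))$ to $[t-T,0]$ transported by the appropriate translation, as in Remark \ref{remR1} and \eqref{eqEL1111}. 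Substituting this identity into \eqref{eqEL1}, evaluating at $\gamma = \sigma\overline W_{T-t}(\cdot)$ so that $G_\eta(\gamma) = Y^{t,\eta}_T$, and relabelling the integration variable converts the first term into $\sigma\int_{]r-T+t,0]} (D_{dy}H)(Y^{t,\eta}_T)\,\mathcal{Y}$ and the second into $H(Y^{t,\eta}_T)\,D^m_r\mathcal{Y}$, which is exactly \eqref{propP111}.

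The main obstacle is entirely contained in Lemma \ref{lem L11}: establishing the chain rule and closability for a merely Fr\'echet-$C^1$ functional composed with the Brownian window process. That step is handled by approximating the continuous path by its polygonal (piecewise-linear) interpolation, so that $G(\sigma\overline W_{T-t}(\cdot))$ becomes a smooth function of finitely many increments of $\overline W$ to which the classical finite-dimensional Malliavin chain rule applies, and then passing to the limit; the passage is justified by the polynomial growth of $DG$ together with the fact that $\sup_{x\le T-t}|\overline W_x|$ has moments of all orders, while the Leibniz rule for the product with $\mathcal{Y}\in\mathbb{D}^\infty$ keeps everything in $\mathbb{D}^{1,2}$. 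Once Lemma \ref{lem L11} is granted, what is left here is only the bookkeeping of the chain rule for the affine map $G_\eta$ described above.
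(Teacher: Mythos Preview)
Your proposal is correct and follows essentially the same route as the paper: the paper's ``proof'' of Proposition \ref{propP11} is precisely the derivation \eqref{eqEL11}--\eqref{eqEL1111} preceding the statement, namely writing $Y^{t,\eta}_T = G_\eta(\sigma\overline W_{T-t}(\cdot))$, applying Lemma \ref{lem L11} to the composite $H\circ G_\eta$, and then unwinding the chain rule for the affine map $G_\eta$ via Remark \ref{remR1}. You have also correctly located the only real analytic content in Lemma \ref{lem L11} and described the polygonal-approximation argument the paper alludes to but does not spell out.
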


Previous proposition admits a generalization to the case when $H:C([-T,0])\longrightarrow \R$ is replaced by a functional
\be
C([-T,0])\longrightarrow   \underbrace{\left( C([-T,0])\hat\otimes_\pi \cdots \hat\otimes_{\pi} C([-T,0])\right)} _{n \mbox{ times}} 		\quad\quad n\geq 1.
\ee
Typically an example will be $D^n H$.
We recall that
\[
\Big(  \underbrace{ C([-T,0])\hat\otimes_\pi \cdots\hat\otimes_{\pi} C([-T,0]) } _{n \mbox{ times}} \Big)^*
\]
can be isomorphically identified with the space of $n$-multilinear continuous
 functionals on $ C([-T,0])$. %Avoiding to state too abstract results
 Proposition \ref{propP11} can be generalized as follows.
\begin{prop}	\label{propP1112}
Let $H:C([-T,0])\longrightarrow \R$ of class $C^{n+1}$-Fr\' echet
such that $D^{n+1} F$ has polynomial growth.
%%%  OLD
%Let $\mathcal{Y}\in \mathbb{D}^{\infty}\Big( \underbrace{  C([-T,0])\hat\otimes_\pi \cdots \hat\otimes_{\pi} C([-T,0]) } _{n \mbox{ times}} \Big)$. \\
Let $\mathcal{Y}\in \mathbb{D}^{\infty}\Big( \underbrace{  C([0,T-t])\hat\otimes_\pi \cdots \hat\otimes_{\pi} C([0,T-t]) } _{n \mbox{ times}} \Big)$.
Then $\langle D^{n} H(Y^{t,\eta}_T), \shy \rangle$ belongs to ${\mathbb D}^{1,2}$.
Moreover, for a.e. $r\in [0,T-t]$ we have
\be \label{eqDIFF}
\begin{split}
D^m_r\left( \langle D^n H(Y^{t,\eta}_T)\ ,\ \mathcal{Y} \rangle \right)=&
\sigma \langle D^{n+1} H(Y^{t,\eta}_T)\ , 1_{]r-T+t,0]} \otimes \mathcal{Y} \rangle\\
& + \langle D^nH(Y^{t,\eta}_T)\ ,\ D^m_r \mathcal{Y}\rangle \ .
\end{split}
\ee
\end{prop}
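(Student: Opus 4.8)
The plan is to follow exactly the route used for Proposition~\ref{propP11}, upgrading Lemma~\ref{lem L11} to functionals taking values in a Banach space. First I would establish a vector-valued version of Lemma~\ref{lem L11}: let $V$ and $W$ be Banach spaces equipped with a continuous bilinear pairing $\langle\cdot,\cdot\rangle:V\times W\to\R$, let $G:C([-(T-t),0])\to V$ be of class $C^{1}$-Fr\'echet with $DG$ of polynomial growth, and let $\mathcal{Z}\in\mathbb{D}^{\infty}(W)$; then $\langle G(\sigma\overline W_{T-t}(\cdot)),\mathcal{Z}\rangle\in\mathbb{D}^{1,2}$ and, for a.e. $r\in[0,T-t]$,
\[
D^{m}_{r}\langle G(\sigma\overline W_{T-t}(\cdot)),\mathcal{Z}\rangle
=\sigma\Big\langle\int_{]r-(T-t),0]}(D_{dy}G)(\sigma\overline W_{T-t}(\cdot)),\mathcal{Z}\Big\rangle
+\langle G(\sigma\overline W_{T-t}(\cdot)),D^{m}_{r}\mathcal{Z}\rangle .
\]
This is proved exactly as Lemma~\ref{lem L11}: one replaces $\overline W$ by its polygonal interpolant on a mesh, so that $G$ becomes a smooth function of finitely many Gaussian increments, applies the finite-dimensional chain rule together with the Leibniz rule for $D^{m}$, and passes to the limit using the polynomial growth of $G$ and $DG$ and the fact that $\sup_{x\le T-t}|\overline W_{x}|$ has moments of all orders.

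Then I would specialize. Recall from the appendix that $Y^{t,\eta}_{T}=G_{\eta}(\sigma\overline W_{T-t}(\cdot))$, with $G_{\eta}$ affine. Since $H\in C^{n+1}(C([-T,0]))$ and $G_{\eta}$ is affine, the map $\gamma\mapsto(D^{n}H)(G_{\eta}(\gamma))$ is of class $C^{1}$ from $C([-(T-t),0])$ into the dual of the $n$-fold projective tensor power of $C([-T,0])$; by the chain rule its Fr\'echet derivative is $D^{n+1}H(G_{\eta}(\gamma))$ pre-composed with the constant linear part of $G_{\eta}$, which merely inserts the restriction to $[t-T,0]$, and by Remark~\ref{rm PMLOP}.2 and the hypothesis on $D^{n+1}H$ this derivative has polynomial growth. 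Applying the vector-valued lemma with $G=D^{n}H\circ G_{\eta}$, $V$ the dual of that tensor power, and $W$ the tensor power appearing in the statement (which hosts $\mathcal{Y}$), one gets $\langle D^{n}H(Y^{t,\eta}_{T}),\mathcal{Y}\rangle\in\mathbb{D}^{1,2}$; the first term of the resulting identity equals $\sigma\langle D^{n+1}H(Y^{t,\eta}_{T}),\1_{]r-T+t,0]}\otimes\mathcal{Y}\rangle$ once one identifies the integral over $]r-(T-t),0]$ of the linear part of $G_{\eta}$ with the insertion of the function $\1_{]r-T+t,0]}$ into the extra slot of $D^{n+1}H$ — the very computation already performed for $D^{m}_{r}\mathcal{Z}$ in the case $n=1$ inside the proof of Theorem~\ref{thm 2 derivate u}, and consistent with $D^{m}_{r}Y^{t,\eta}_{T}=\sigma\,\1_{]r-T+t,0]}$. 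The Leibniz part of the lemma supplies the remaining term $\langle D^{n}H(Y^{t,\eta}_{T}),D^{m}_{r}\mathcal{Y}\rangle$, so that \eqref{eqDIFF} follows.

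The main obstacle is the rigorous vector/tensor-valued Malliavin chain rule: one must make sense of $D^{m}$ applied to a functional with values in the dual of a projective tensor power of $C([-T,0])$ and justify the polygonal approximation in that Banach setting. A further delicate point is that the multilinear form $D^{n+1}H(Y^{t,\eta}_{T})$ is only a measure in each of its arguments, so inserting the non-continuous indicator $\1_{]r-T+t,0]}$ into one slot requires the extension of the multilinear form from continuous to Borel bounded arguments — exactly the extension already invoked for the trilinear form attached to $D^{3}H$ in the treatment of the term $A_{331}$. Once that extension and the integrability bounds (polynomial growth of $D^{j}H$ for $j\le n+1$, obtained from $D^{n+1}H$ by the Taylor argument of Remark~\ref{rm PMLOP}.2, together with all moments of $\sup_{x\le T}|\overline W_{x}|$) are in place, membership in $\mathbb{D}^{1,2}$ and the closedness of $D^{m}$ give the identity for a.e. $r$; as elsewhere in the paper, several of these steps would only be sketched.
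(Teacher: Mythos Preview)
Your proposal is correct and follows essentially the same route as the paper: the paper's proof consists of a single sentence stating that one generalizes Lemma~\ref{lem L11} by replacing the target space $\R$ with the separable Banach space $B=C([-T,0])\hat\otimes_\pi\cdots\hat\otimes_\pi C([-T,0])$, and then records this generalization as Lemma~\ref{lem L12}. Your write-up is in fact considerably more detailed than the paper's own sketch --- you spell out the composition with the affine map $G_\eta$, the identification of the first term with the insertion of $\1_{]r-T+t,0]}$, and the measure-theoretic extension needed for the indicator argument --- all of which the paper leaves implicit.
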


\begin{rem}	\label{remR3}
The function $1_{]r-T+t,0]}$ can be considered as a test function
$\zeta_0$. Indeed for fixed $\zeta_1,\cdots, \zeta_n\ \in C([-T,0])$,
\be
\zeta_0 \mapsto D^{n+1} H(Y^{t,\eta}_T)\left( \zeta_0\otimes \zeta_1\otimes \cdots \zeta_n \right)
\ee
is a measure.
\end{rem}
\begin{proof} Avoiding to state too abstract results, the proof of Proposition \ref{propP1112} is based on a generalization of Lemma \ref{lem L11} replacing the value space $\R$ with the
separable Banach space $B$,
setting $B=C([-T,0])\hat\otimes_\pi \cdots \hat\otimes_{\pi} C([-T,0])$.
\end{proof}

\begin{lem}		\label{lem L12}
Let $B$ be a separable Banach space. Let
\[
G:C([-T+t,0])\longrightarrow B^*
\]
of class $C^1$ Fr\'echet with polynomial growth. Let $\mathcal{Y}\in \mathbb{D}^\infty(B)$.

Then $
G\left(\overline W_{T-t}(\cdot) \right)\ \mathcal{Y}\ \in \mathbb{D}^{1,2}(B)$
 and
 \be
 \begin{split}
& D_r \left( _{B^*}\langle G\left( \overline W_{T-t}(\cdot) \right) \ ,\ \mathcal{Y} \rangle_B \right)\\
&\hspace{2cm}=
_{(C([-T,0])\hat\otimes_\pi B)^*} \langle DG\left( \overline W_{T-t}(\cdot) \right), 1_{]r-T+t,0]}\otimes \mathcal{Y} \rangle_{C([-T,0])\hat\otimes_\pi B} \\
& \hspace{4cm}+
\langle G\left(\overline W_{T-t}(\cdot) \right)\ , \ D^m_r\mathcal{Y}\rangle
\end{split}
 \ee
\end{lem}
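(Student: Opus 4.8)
The plan is to derive the identity from the classical finite-dimensional chain rule together with the Leibniz (product) rule for the Malliavin derivative, via a polygonal approximation of the Brownian path, in the spirit of \cite{nualartSEd}, Example 1.2.1, already invoked for Lemma \ref{lem L11}; the case $B=\R$, $\mathcal{Y}\equiv 1$ is essentially Lemma \ref{lem L11}, and what follows is its $B$-valued, Leibniz-rule refinement. The two building blocks I would use are: (i) for a $C^1$ map $g:\R^N\to E$ (with $E$ a separable Banach space) such that $g$ and $\nabla g$ have polynomial growth, and for $(\overline W_{s_1},\dots,\overline W_{s_N})$, one has $g(\overline W_{s_1},\dots,\overline W_{s_N})\in\mathbb{D}^{1,2}(E)$ with $D^m_r\, g(\overline W_{s_1},\dots,\overline W_{s_N})=\sum_{k}\partial_k g(\overline W_{s_1},\dots,\overline W_{s_N})\,\1_{[0,s_k]}(r)$; (ii) if $Z\in\mathbb{D}^{1,2}(B^*)$ and $\mathcal{Y}\in\mathbb{D}^{\infty}(B)$, then $\langle Z,\mathcal{Y}\rangle\in\mathbb{D}^{1,2}$ with $D^m_r\langle Z,\mathcal{Y}\rangle=\langle D^m_r Z,\mathcal{Y}\rangle+\langle Z,D^m_r\mathcal{Y}\rangle$, the first pairing being read in $(C([-T+t,0])\hat\otimes_\pi B)^*$ against $C([-T+t,0])\hat\otimes_\pi B$.

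Concretely, I would fix partitions $0=s_0^n<\dots<s_{N_n}^n=T-t$ of $[0,T-t]$ with mesh tending to $0$, let $\overline W^{(n)}$ be the polygonal interpolation of $\overline W$ along them, and set $Z_n:=G(\overline W^{(n)}_{T-t}(\cdot))$. Since $\overline W^{(n)}_{T-t}(\cdot)$ is a fixed continuous linear image of the Gaussian vector $(\overline W_{s_1^n},\dots,\overline W_{s_{N_n}^n})$, block (i) applies and gives $Z_n\in\mathbb{D}^{1,2}(B^*)$ with $D^m_r Z_n=DG(\overline W^{(n)}_{T-t}(\cdot))(\phi^n_r)$, where $\phi^n_r\in C([-T+t,0])$ is the polygonal interpolant, in the variable $u=T-t+x$, of $u\mapsto\1_{[0,u]}(r)$; one checks that $\phi^n_r$ is bounded by $1$ and converges pointwise to $\1_{]r-T+t,0]}$. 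Block (ii) then yields $\langle Z_n,\mathcal{Y}\rangle\in\mathbb{D}^{1,2}$ with
\[
D^m_r\langle Z_n,\mathcal{Y}\rangle=\langle DG(\overline W^{(n)}_{T-t}(\cdot)),\,\phi^n_r\otimes\mathcal{Y}\rangle+\langle G(\overline W^{(n)}_{T-t}(\cdot)),\,D^m_r\mathcal{Y}\rangle .
\]

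Finally I would pass to the limit $n\to\infty$. By uniform continuity of Brownian paths, $\overline W^{(n)}_{T-t}(\cdot)\to\overline W_{T-t}(\cdot)$ in $C([-T+t,0])$ a.s., so by continuity and polynomial growth of $G$ and $DG$ and the fact that $\sup_{x\le T}|\overline W_x|$ has moments of all orders, $Z_n\to G(\overline W_{T-t}(\cdot))$ and $\langle Z_n,\mathcal{Y}\rangle\to\langle G(\overline W_{T-t}(\cdot)),\mathcal{Y}\rangle$ in $L^2$. On the right-hand side of the display, using that $DG(\overline W^{(n)}_{T-t}(\cdot))$ is a \emph{measure} in its first argument (Remark \ref{remR1}), and is uniformly bounded in the polynomial-growth sense, dominated convergence in $\omega$, in $r\in[0,T-t]$ and against that measure gives convergence in $L^2(\Omega\times[0,T-t])$ to $\langle DG(\overline W_{T-t}(\cdot)),\1_{]r-T+t,0]}\otimes\mathcal{Y}\rangle+\langle G(\overline W_{T-t}(\cdot)),D^m_r\mathcal{Y}\rangle$; closability of $D^m$ then gives $\langle G(\overline W_{T-t}(\cdot)),\mathcal{Y}\rangle\in\mathbb{D}^{1,2}$ and the stated identity, identifying functions on $[-T,0]$ that vanish on $[-T,-T+t)$ with functions on $[-T+t,0]$. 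The main obstacle is precisely this last limit: since $\phi^n_r$ converges only boundedly and pointwise — the limit $\1_{]r-T+t,0]}$ being discontinuous at $x=r-T+t$ — one cannot work inside $C([-T+t,0])$, and must instead exploit the measure structure of $DG$ (Remarks \ref{remR1} and \ref{remR3}) together with a uniform estimate of the type $\|DG(\overline W^{(n)}_{T-t}(\cdot))\|\le \mathrm{const}\,(1+\sup_{x\le T}|\overline W_x|^p)$ to secure the uniform integrability in $\omega$ and $r$ needed for dominated convergence.
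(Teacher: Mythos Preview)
The paper does not give an explicit proof of Lemma~\ref{lem L12}; it is stated immediately after the proof of Proposition~\ref{propP1112}, which merely says that the argument is ``based on a generalization of Lemma~\ref{lem L11} replacing the value space $\R$ with the separable Banach space $B$'', and Lemma~\ref{lem L11} itself is only sketched as a polygonal-approximation argument in the spirit of \cite{nualartSEd}, Example~1.2.1. Your proposal carries out precisely this implied strategy --- polygonal approximation of $\overline W$, the finite-dimensional chain rule combined with the Leibniz rule, and passage to the limit via closability of $D^m$ --- so it is fully aligned with the paper's intended (but unwritten) proof, and in fact supplies more detail than the paper does.
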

\begin{rem}
\begin{enumerate}
\item $DG:C([-T+t, 0])\longrightarrow (C([-T,0])\hat\otimes_\pi B)^*$.
\item Proposition \ref{propP1112} will be used for $n=1,2,3$.
\item $D^m_r\mathcal{Y}\in B$ for almost all $r$.
\end{enumerate}
\end{rem}

\section*{Acknowledgement}
The authors wish to thank the Referee
 for the careful reading of the paper and
the Associated Editor and the Editor in Chief as well.
The second named author would like to thank the first one
for the kind invitation in the framework of two GNAMPA
projects financially supported by Istituto di Alta Matematica
Francesco Severi.

\end{document}